\documentclass[11pt]{amsart}

\usepackage{amsmath,amsthm,verbatim,amssymb,amsfonts,amscd, graphicx, enumerate, enumitem,url,hyperref}
\usepackage{graphics}
\usepackage{tikz-cd}
\usepackage{pinlabel}
\usepackage{xcolor}
\usepackage{stackrel}
\usepackage{mathtools,calc}
\usepackage[margin=1.5in]{geometry}

\newtheorem{theorem}{Theorem}[section]
\newtheorem{corollary}[theorem]{Corollary}
\newtheorem{lemma}[theorem]{Lemma}
\newtheorem{proposition}[theorem]{Proposition}

\newtheorem*{lemstong1}{Lemma~\ref{lem:stong1}}
 
\newtheorem{question}[theorem]{Question} 
 
\theoremstyle{definition}
\newtheorem{definition}[theorem]{Definition}
\newtheorem{remark}[theorem]{Remark}
\newtheorem{observation}[theorem]{Observation}

\newtheorem*{mainthm1}{Theorem~\ref{thm:main1}}

\newtheorem*{mainthm3}{Theorem~\ref{thm:main3}}
\newtheorem*{finitecor}{Corollary~\ref{cor:finite}}
\newtheorem*{slicelink}{Theorem~\ref{thm:slicelink}}

\theoremstyle{definition}

\newtheorem{example}[theorem]{Example}

\newtheorem{conclusion}[theorem]{Conclusion}

\newcommand{\nocontentsline}[3]{}
\newcommand{\tocless}[2]{\bgroup\let\addcontentsline=\nocontentsline#1{#2}\egroup}

\newcommand{\leftrarrows}{\mathrel{\raise.75ex\hbox{\oalign{%
  $\scriptstyle\leftarrow$\cr
  \vrule width0pt height.5ex$\hfil\scriptstyle\relbar$\cr}}}}
\newcommand{\lrightarrows}{\mathrel{\raise.75ex\hbox{\oalign{%
  $\scriptstyle\relbar$\hfil\cr
  $\scriptstyle\vrule width0pt height.5ex\smash\rightarrow$\cr}}}}
\newcommand{\Rrelbar}{\mathrel{\raise.75ex\hbox{\oalign{%
  $\scriptstyle\relbar$\cr
  \vrule width0pt height.5ex$\scriptstyle\relbar$}}}}

\usepackage[utf8]{inputenc}

\makeatletter
\def\leftrightarrowsfill@{\arrowfill@\leftrarrows\Rrelbar\lrightarrows}
\newcommand{\xleftrightarrows}[2][]{\ext@arrow 3399\leftrightarrowsfill@{#1}{#2}}
\makeatother

\definecolor{violet}{rgb}{.6,0,.6}
\definecolor{green}{rgb}{.0,.69,0}
\definecolor{brown}{rgb}{.63,.35,.17}
\definecolor{darkgreen}{rgb}{.0,.4,0}
\definecolor{darkred}{rgb}{.7,0,0}
\definecolor{darkerred}{rgb}{.4,0,0}
\definecolor{darkyellow}{rgb}{.4,.4,0}

\newcommand{\Self}{\text{Self}}

\let\int\relax
\newcommand{\int}{\mathring}

\newcommand{\boundary}{\partial}
\newcommand{\into}{\hookrightarrow}

\DeclareMathOperator{\lk}{{lk}}

\DeclareMathOperator{\pt}{{pt}}
\DeclareMathOperator{\fq}{{fq}}
\DeclareMathOperator{\km}{{stong}}
\DeclareMathOperator{\tw}{{tw}}
\DeclareMathOperator{\SO}{{SO}}

\DeclareMathOperator{\Concordance}{{Concordance}}

\newcommand*{\vtilde}[2][0pt]{
  \setbox0=\hbox{$#2$}%
  \widetilde{\mathrlap{\phantom{\rule{\wd0}{\ht0+{#1}}}}\smash{#2}}%
}

\let\Im\relax
\DeclareMathOperator{\Im}{{Im}}

\overfullrule10pt

\title{Concordance of spheres in 4-manifolds with an immersed dual sphere}
\author{Michael Klug}
\address{Department of Mathematics\\University of Chicago\\Chicago, IL 60637, USA}
\email{michaelklug@uchicago.edu}
\author{Maggie Miller}
\address{Department of Mathematics\\Stanford University\\Stanford, CA 94301, USA}
\email{maggie.miller.math@gmail.com}

\thanks{MK was supported by MPIM Bonn and later by NSF grant DMS-2203023 (at Chicago). MM was supported by NSF grants DGE-1656466 (at Princeton) and DMS-2001675 (at MIT) and a research fellowship from the Clay Mathematics Institute (at Stanford).}
\begin{document}

\maketitle

\begin{abstract}
    Let $S_0$ and $S_1$ be two homotopic, oriented 2-spheres embedded in an orientable 4-manifold $X^4$.  After discussing several operations for modifying an immersion of a 3-manifold into a 5-manifold, we discuss two concordance obstructions $\fq(S_0,S_1)$ and $\km(S_0,S_1)$ which, when defined, are defined in terms of the self-intersection set of a regular homotopy from $S_0$ to $S_1$.  When $S_0$ has an immersed dual sphere, we see that under some mild topological conditions on $X$, the invariants $\fq$ and $\km$ are a complete set of concordance obstructions.  This work is an adaption of the methods of Richard Stong to the context of concordances of 2-spheres.  
\end{abstract}

\addtocontents{toc}{\protect\setcounter{tocdepth}{0}}

\section{Introduction}

We study the question of when two homotopic, oriented 2-spheres $S_0, S_1$ embedded in a 4-manifold $X^4$ are concordant. In the case that one of the 2-spheres admits an immersed dual (that is, in the case that there is an immersed 2-sphere intersecting $S_0$ in a single point) and the 4-manifold $X$ satisfies some mild conditions then there is a complete pair of obstructions to the existence of a concordance.

These invariants are the {\emph{Freedman--Quinn}} invariant $\fq$ (see~\cite{schneiderman_teichner, fq, km} and Section~\ref{sec:removing_type_II}) and the {\emph{Stong}} invariant $\km$ of Stong~\cite{stong} following Freedman and Quinn (see~\cite{stong},~\cite[Chapter 10]{fq}, and Section~\ref{sec:km}). If we were to closely follow the language of Stong, we would probably call the Stong invariant the Kervaire--Milnor invariant km -- but this is somewhat misleading as it has very little to do with the work of Kervaire or Milnor, so we instead refer to it as the Stong invariant.\footnote{This also neatly avoids any reader thinking we intended km to stand for Klug--Miller.} The invariant $\km$ is secondary to $\fq$, in the sense that $\km(S_0, S_1)$ cannot be defined if $\fq(S_0,S_1)$ is non-vanishing. 
If the spheres $S_0$ and $S_1$ are concordant, then both $\fq(S_0,S_1)$ and (when it is defined) $\km(S_0,S_1)$ must vanish.  When one of $S_i$ admits an immersed dual and $X$ satisfies some mild conditions, the 2-spheres $S_0, S_1$  are concordant if and only if $\fq$ and $\km$ vanish. 
This follows the usual trend that stabilization (here in the form of a dual sphere, which can be introduced by various stabilization operations) greatly simplifies 4-dimensional topology. 

\subsection*{The Freedman--Quinn and Stong invariants}

Briefly, the Freedman--Quinn invariant $\fq(S_0,S_1)$ is defined by 
considering the 1-dimensional link that is the preimage of the self-intersection set of a singular concordance between $S_0$ and $S_1$. The following definitions are of critical importance to this paper. 

\begin{definition}
Let $S_0$ and $S_1$ be embedded, oriented spheres in a 4-manifold $X$.  A regular immersion $H : S^2 \times I \to X^4 \times I$ with $H(S^2 \times \{0\}) = S_0$ and $H(S^2 \times \{1\}) = S_1$, with $S^2\times I$ oriented so that $\partial H(S^2\times I)$ induces the correct orientation on $S_1$ and the opposite orientation on $S_0$, is called a \emph{singular concordance} from $S_0$ to $S_1$.  If $S_0$ and $S_1$ share a basepoint $z \in X$, and $H$ is a singular concordance from $S_0$ to $S_1$ where there exists some $p \in S^2$ with $H(p,t) = (z,t)$ for all $t \in I$, then we call $H$ a \emph{based singular concordance} from $S_0$ and $S_1$.  
\end{definition}

\begin{remark}
If $S_0$ and $S_1$ are (based) homotopic, then the trace of a (based) regular homotopy from $S_0$ to $S_1$ yields a (based) singular concordance from $S_0$ to $S_1$. (Recall that any two homotopic, embedded 2-spheres in a 4-manifold are also regularly homotopic by work of Hirsch~\cite{hirsch} and Smale~\cite{smale} in the smooth category; see~\cite[Chapter 1]{fq} in the topological category.) 
However, we do not generally expect a singular concordance $H$ to be the trace of a regular homotopy.
\end{remark}

The following weakening of the usual condition that a surface in a 4-manifold is characteristic will be essential in what follows.

\begin{definition}
A surface $F$ in a 4-manifold $X$ is {\emph{$s$-characteristic}} (or {\emph{spherically-characteristic}}) if for every 2-sphere $S$ immersed in $X$, we have \[[F] \cdot [S] \equiv [S] \cdot [S]\pmod{2}.\] Similarly, a 3-manifold $M$ properly embedded in a 5-manifold $W$ is $s$-characteristic if for every 2-sphere $S$ embedded in $W$, we have \[[M]\cdot[S]\equiv[S]\cdot[S]\pmod{2}.\]
\end{definition}


\begin{observation} \label{lem:4to5}
If $H$ is a singular concordance from $S_0$ to $S_1$, then $S_0$ (and hence $S_1$) is $s$-characteristic in $X$ if and only if $H(S^2\times I)$ is $s$-characteristic in $X\times I$.

This is clear because any 2-sphere $G$ in $X\times I$ can be projected to an immersed 2-sphere in $X$, and similarly any 2-sphere immersed in $X\times 0$ can be pushed into $X\times I$ and isotoped to be embedded. 

In particular, if there is a 2-sphere $G$ in $X\times I$ with trivial normal bundle that intersects $H(S^2\times I)$ transversely once, then $S_0$ (and hence also $S_1$) is not $s$-characteristic.
\end{observation}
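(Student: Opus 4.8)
The plan is to reformulate each $s$-characteristic condition as a statement about $w_2$, and then observe that the projection $\pi\colon X\times I\to X$ identifies the two reformulations. The one step needing genuine care is the duality bookkeeping that identifies the Poincar\'e--Lefschetz dual of the singular concordance with the pullback of $\mathrm{PD}[S_0]$; this turns out to be painless precisely because every intersection number in sight is a $\Z/2$-homological quantity, so the double points of the merely-immersed $H$ play no role.

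First I would record the algebraic reformulation. For every $\alpha\in H_2(X;\Z/2)$ the mod-$2$ self-intersection satisfies $\alpha\cdot\alpha=\langle w_2(X),\alpha\rangle$ (Wu's formula; equivalently, for an immersed representative the normal Euler number reduces mod $2$ to $w_2$ of the normal bundle, which for a sphere in an orientable $4$-manifold is $w_2(X)|_{\alpha}$), and immersed $2$-spheres in $X$ represent exactly the spherical classes, i.e.\ the image of $\pi_2(X)\to H_2(X;\Z/2)$. Hence $S_0$ is $s$-characteristic in $X$ if and only if $\mathrm{PD}[S_0]+w_2(X)$ vanishes on every spherical class. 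The identical reformulation holds in $W:=X\times I$: with the convention that the self-intersection of an embedded $2$-sphere $S\subset W$ is its mod-$2$ normal Euler number $\langle w_2(\nu_S),[S]\rangle$, which equals $\langle w_2(W),[S]\rangle$ since $w_2(S^2)=0$ and $W$ is orientable, a properly embedded (or, working mod $2$, merely immersed) $3$-manifold $M\subset W$ is $s$-characteristic if and only if $\mathrm{PD}_W[M]+w_2(W)$ vanishes on the spherical classes of $H_2(W;\Z/2)$.

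Next I would transport everything along $\pi$. Being a homotopy equivalence, $\pi$ induces an isomorphism $\pi_*\colon H_2(W;\Z/2)\xrightarrow{\ \cong\ }H_2(X;\Z/2)$ carrying spherical classes to spherical classes: concretely, an embedded $2$-sphere in $W$ projects to a generically immersed one in $X$ in the same class, while an immersed $2$-sphere in $X$, pushed into a level $X\times\{t\}$ and then separated along $I$, becomes embedded in $W$ in the same class. Dually $\pi^*w_2(X)=w_2(W)$, because $T(X\times I)\cong\pi^*TX\oplus\underline{\R}$. It remains to check $\mathrm{PD}_W[H(S^2\times I)]=\pi^*\mathrm{PD}_X[S_0]$: restricting $\mathrm{PD}_W[H(S^2\times I)]$ to a generic level $X\times\{t\}$ with $t\in(0,1)$ gives $\mathrm{PD}_X[\Sigma_t]$, where $\Sigma_t:=H(S^2\times I)\cap(X\times\{t\})$ is viewed as a surface in $X$, and $\Sigma_t$ is $\Z/2$-homologous to $S_0$ in $X$ because the part of $H(S^2\times I)$ lying below level $t$ is a $3$-chain with boundary $S_0\sqcup\Sigma_t$; since a level inclusion is a homotopy inverse to $\pi$, this is the asserted identity. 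Combining the three facts, $H(S^2\times I)$ is $s$-characteristic in $W$ if and only if $\mathrm{PD}_X[S_0]+w_2(X)$ vanishes on spherical classes, i.e.\ if and only if $S_0$ is $s$-characteristic in $X$; and since $[S_1]=[S_0]$ the same holds with $S_1$ in place of $S_0$.

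The concluding assertion then follows at once: if $G\subset X\times I$ has trivial normal bundle its self-intersection $\langle w_2(\nu_G),[G]\rangle$ vanishes, so a single transverse intersection point with $H(S^2\times I)$ makes $[H(S^2\times I)]\cdot[G]\equiv1$ while $[G]\cdot[G]\equiv0\pmod 2$, violating the $s$-characteristic congruence in $W$; hence $H(S^2\times I)$ is not $s$-characteristic there, and by the equivalence just proved neither $S_0$ nor $S_1$ is $s$-characteristic in $X$. As flagged above, the only subtlety is the duality bookkeeping of the previous paragraph, and it is handled entirely $\Z/2$-homologically, so no hypothesis that $H$ (or any of its slices, or the projected/resolved test spheres) be embedded is ever needed.
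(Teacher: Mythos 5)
Your argument is correct and is in essence the paper's own: the paper disposes of the observation by noting that projection $X\times I\to X$ and pushing into a level set up a correspondence between the test spheres in the two settings under which all the mod-$2$ intersection data agree, and your Wu-formula/Poincar\'e-duality packaging is just a more formal rendering of exactly that correspondence. The extra care you take with $\mathrm{PD}_W[H(S^2\times I)]=\pi^*\mathrm{PD}_X[S_0]$ and with $[S]\cdot[S]=\langle w_2(\nu_S),[S]\rangle$ in the $5$-manifold is sound and only makes explicit what the paper leaves implicit.
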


Given a singular concordance $H:S^2\times I\to X\times I$ from $S_0$ to $S_1$, let $L$ be the link in $S^2\times I$ that is the preimage of the self-intersection set of $H$. 
We call $L$ the {\emph{singular link}} of $H$. We can associate an element of $\pi_1 X$ to every component of $L$ via a sheet-changing based loop. The invariant $\fq(S_0,S_1)$ counts the fundamental group elements corresponding to circles that double-cover their images under $H$ as an element in a certain quotient of  $\mathbb{Z}[\pi_1 X]/\langle g+g^{-1},1\rangle$; 
see Section~\ref{sec:removing_type_II} for a detailed description.  As we discuss in Section~\ref{sec:removing_type_II}, in general $H$ must be a based singular concordance, although in the presence of dual spheres, this assumption is not necessary (see also~\cite{schneiderman_teichner} and~\cite{km})

The Stong $\km(S_0, S_1)$ invariant is more complicated but essentially counts the group elements associated to $L$ (viewed as elements in 
$H_1(X; \mathbb{Z}/2\mathbb{Z})$) weighted by linking numbers of the various components of $L$. A relatively complete description of an analogous invariant in the case of a 3-sphere in a 5-manifold can be found in~\cite{stong}; we have further exposition of this in progress and will release it in a forthcoming paper~\cite{km2}.   
Here is a sufficient description for a first-pass at the our main results is as follows (for a complete discussion, see Section~\ref{sec:km}).

\begin{enumerate}[itemsep=1ex]
    \item The invariant $\km(S_0,S_1)$ takes values in a quotient of $H_1(X; \mathbb{Z}/2\mathbb{Z})$.  In certain cases (for example, when $\pi_3(X)=0$), we quotient by nothing and in fact $\km(S_0,S_1) \in H_1(X; \mathbb{Z}/2\mathbb{Z})$. We discuss this further in Section~\ref{sec:km}.
    
    \item If $\km(S_0, S_1) \neq 0$, then $S_0$ and $S_1$ are not concordant.
    
    \item If $H$ is as above, with $S_0$ and $S_1$ $s$-characteristic, $\fq(S_0, S_1) = 0$ and $L$ is a Hopf link in a ball in $S^2 \times I$ with the image of both knots equal, then $\km(S_0, S_1)$ is the sheet-changing group element associated to $L$ (considered in a certain quotient of $H_1(X; \mathbb{Z}/2\mathbb{Z})$). 
\end{enumerate}



\subsection*{Main Theorems} We now state the main results of the paper, delaying a more detailed discussion of the invariants until Section~\ref{sec:fqkm}.

\begin{theorem}[Case where spheres are not $s$-characteristic]\label{thm:main1}
Suppose that $S_0$ and $S_1$ are embedded, oriented, homotopic 2-spheres in an orientable 4-manifold $X$ such that $S_0$ has an immersed dual sphere $G$ in $X$ (i.e., $G$ and $S_0$ intersect in a single point) and $S_0$ is not $s$-characteristic. Then $S_0$ and $S_1$ are concordant if and only if $\fq(S_0,S_1) = 0$.
\end{theorem}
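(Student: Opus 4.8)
The plan is to start from a singular concordance $H\colon S^2\times I\to X\times I$ with $\fq(S_0,S_1)=0$, whose existence follows from the fact that $S_0$ and $S_1$ are homotopic (hence regularly homotopic) so that the trace of a regular homotopy gives such an $H$. The goal is to modify $H$, keeping its boundary fixed, until it is an embedding, i.e.\ a genuine concordance. The self-intersections of $H$ organize, as in the excerpt, into a singular link $L\subset S^2\times I$ with group elements attached to its components, and we want to trade these away. The first reduction is to use the immersed dual $G$ of $S_0$: pushing $G$ into $X\times I$ and tubing copies of it (and of its parallel translates) into the sheets of $H$ is the standard device that lets us alter the framings and group elements of the components of $L$, and—crucially in the non-$s$-characteristic case—lets us dispense with any basepoint/based-homotopy hypothesis, exactly as flagged after Observation~\ref{lem:4to5} and in the discussion of $\fq$.

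The main body of the argument is a sequence of moves on the immersed $3$-manifold $H(S^2\times I)$ in the $5$-manifold $X\times I$, following Stong's template. I would proceed in the order: (1) arrange the self-intersection set to be ``generic'' and split $L$ into Type I components (those whose image is embedded, i.e.\ intersections between two distinct sheets) and Type II components (those double-covering their image, i.e.\ self-intersections of a single sheet); (2) remove the Type II components—this is precisely where $\fq(S_0,S_1)=0$ is consumed, since the Freedman--Quinn invariant is by definition the obstruction living in the relevant quotient of $\mathbb{Z}[\pi_1 X]/\langle g+g^{-1},1\rangle$ to cancelling these in pairs via finger moves/Whitney moves, and the dual sphere $G$ supplies the Whitney disks (with correct framing, after tubing into $G$) needed to actually perform the cancellation; (3) with only Type I self-intersections left, pair them up along Whitney circles and use $G$ together with the non-$s$-characteristic hypothesis to find framed, embedded Whitney disks: being non-$s$-characteristic is exactly what guarantees (via Observation~\ref{lem:4to5}, since $H(S^2\times I)$ inherits non-$s$-characteristic-ness) that there is no $\mathbb{Z}/2$ obstruction to correcting Whitney framings, i.e.\ the ``$\km$-type'' obstruction is forced to vanish and need not even be invoked; (4) perform the Whitney moves to remove all remaining double points, yielding an embedded $S^2\times I$, hence a concordance.

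Throughout, the enabling technical input is the collection of operations on immersions of $3$-manifolds in $5$-manifolds promised in the abstract and (presumably) developed in the sections preceding this theorem: finger moves, Whitney moves, tubing into a dual sphere, and boundary-twisting, all performed in $X\times I$ rel $X\times\{0,1\}$. I would cite those lemmas to justify that each move changes $H$ only in a collar-disjoint ball and preserves the boundary data $S_0\sqcup S_1$. The one genuinely global point to check is that the group-theoretic bookkeeping closes up: after all the local moves, the total change in the $\fq$-type count is the original invariant, so vanishing of $\fq(S_0,S_1)$ really does let step (2) terminate, and there is no leftover secondary obstruction because step (3)'s obstruction lands in a group that is trivial under the non-$s$-characteristic hypothesis.

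The hard part will be step (3)—or rather, the interface between steps (2) and (3): after removing Type II circles one must ensure that this did not reintroduce uncontrolled Type I intersections or framing defects, and that the Whitney disks produced for the surviving Type I pairs can be made simultaneously embedded and disjoint from the sheets (again using $G$ to clean up intersections, and the non-$s$-characteristic condition to fix framings). Managing the framings of these Whitney disks—tracking them as $\mathbb{Z}$-valued obstructions that reduce mod $2$ to the $s$-characteristic pairing and are therefore killable by tubing into $G$—is the crux, and is the place where Stong's original $3$-sphere-in-a-$5$-manifold arguments must be adapted with care to the $S^2\times I$ setting with prescribed boundary.
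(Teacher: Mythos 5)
Your first half is on track: the ``only if'' direction is immediate, and using $\fq(S_0,S_1)=0$ to cancel the type II circles in pairs via Whitney moves is exactly the paper's Proposition~\ref{fq_typeII} (a Whitney move converts a cancelling pair of type II circles into a single type I dual pair). The gap is in your steps (3)--(4). In this $3$-manifold-in-a-$5$-manifold setting the self-intersections are \emph{circles}, not isolated double points, and a Whitney move does not remove them: its effect on the singular link $L$ is band surgery along the two framed bands, plus the addition of a new type I dual pair for each intersection of the Whitney disk with $H(S^2\times I)$ (see Figure~\ref{fig:toothpickeffect} and the Conclusion at the end of Section~\ref{sec:whitneyfinalremarks}). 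Band surgery can merge, split, or re-link components of $L$, but it can never produce the empty link, so ``perform the Whitney moves to remove all remaining double points'' cannot terminate in an embedding. The move that actually deletes a type I dual pair $A,A'$ is the ambient Dehn surgery of Section~\ref{sec:ambientdehn}: one surgers $H(S^2\times I)$ along a $2$-handle over a disk bounded by $H(A')$ (requiring $A'$ unknotted and unlinked from $L-A'$) and then along a second $2$-handle over a meridian disk tubed into the dual sphere.

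This also changes where the non-$s$-characteristic hypothesis enters. It is not used to kill a Whitney-framing obstruction; it is used via Lemma~\ref{lem:notschar} to upgrade the immersed dual $G$ to an embedded dual sphere $G'$ with \emph{trivial} normal bundle. Trivial normal bundle is what makes the second ambient Dehn surgery return a manifold homeomorphic to $S^2\times I$ with the correct ($0$-framed) surgery coefficient and without adding full twists to the circles linking $A$ (Remark~\ref{ambientdehnremark}); when the dual is unframed one is forced to add an odd number of twists, and that is precisely the source of the secondary $\km$ obstruction in the $s$-characteristic case. The remaining work in the paper's proof is to put $L$ into the position where ambient surgery applies: bound the type I circles by immersed disks with only clasp intersections (Lemma~\ref{lem:clasp}), and resolve each clasp by finger moves that create meridian pairs $E,E'$ and $F,F'$ which are then deleted by ambient surgery, until $L$ is an unlink of dual pairs, each of which is removed by a final ambient surgery. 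So your outline needs its endgame replaced: Whitney moves are the tool for reorganizing $L$ (and for the type II cancellation), while ambient Dehn surgery, enabled by a framed dual, is the tool that actually empties it.
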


This in particular generalizes~\cite[Theorem 1.4]{km}.  In the case where $S_0$ is $s$-characteristic, we will observe a secondary obstruction $\km(S_0,S_1)$.  Furthermore, in this case, some mild restrictions must be placed on $X$ in order for the vanishing of $\fq(S_0,S_1)$ and $\km(S_0,S_1)$ to imply that $S_0$ and $S_1$ are concordant.  The need for such conditions arises since both $\fq(S_1,S_0)$ and $\km(S_0,S_1)$ live in quotients and the first step in attempting to find a concordance between $S_0$ and $S_1$ is to start with a singular concordance and modify it so as to make the pre-quotiented versions of $\fq$ and $\km$ simultaneously vanish for this singular concordance -- which may not be possible even when $\fq(S_1,S_0)$ and $\km(S_0,S_1)$ both vanish, without any additional hypotheses.  

In what follows, the map $\mu$ is the self-intersection number of a 3-sphere in a 6-manifold (where here the 6-manifold is $X \times I \times I$) -- see~\cite{schneiderman_teichner},~\cite{km}, or Section~\ref{sec:fqkm} for additional discussion.  

\begin{theorem}[Case where spheres are $s$-characteristic]\label{thm:main2}
Suppose that $S_0$ and $S_1$ are embedded, oriented, homotopic 2-spheres in an orientable 4-manifold $X$ such that $S_0$ has an immersed dual sphere $G$ in $X$ and $S_0$ is $s$-characteristic. Assume that $\mu(\pi_3(X))=0$, e.g., by assuming either $\pi_1 X$ has no 2-torsion or $\pi_3(X)=0$. 
    Then $S_0$ and $S_1$ are concordant if and only if $\fq(S_0,S_1) = 0$ and $\km(S_0, S_1) = 0$.
\end{theorem}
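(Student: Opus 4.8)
The forward direction is immediate: if $S_0$ and $S_1$ are concordant then the concordance is a singular concordance with empty singular link, so both $\fq(S_0,S_1)$ and $\km(S_0,S_1)$ vanish. For the converse, the plan is to start with an arbitrary singular concordance $H : S^2 \times I \to X \times I$ from $S_0$ to $S_1$ (which exists since the spheres are regularly homotopic) and to modify it step by step until it is an embedding. I would organize the argument around the singular link $L \subset S^2 \times I$, the preimage of the self-intersection set of $H$. The first stage is to use the immersed dual sphere $G$ to trade geometric control for algebraic control: tubing parallel copies of $G$ (pushed into $X \times I$) into $H$ allows one to change the framings and the homotopy classes of the sheet-changing loops associated to components of $L$, and — crucially — to arrange that $L$ has a standard form. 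Concretely, I would aim to reduce to the situation where $L$ is a union of Hopf links and ``Type II'' circles (circles double-covering their image), each contained in a small ball, matching the normal forms referenced in items (2)--(3) of the $\km$ description and in Section~\ref{sec:removing_type_II}.

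The second stage handles the Type II circles. Since $\fq(S_0,S_1) = 0$, the algebraic count of the group elements carried by the double-point circles vanishes in the relevant quotient of $\mathbb{Z}[\pi_1 X]/\langle g + g^{-1}, 1\rangle$; the hypothesis $\mu(\pi_3(X)) = 0$ is exactly what guarantees that this algebraic vanishing can be upgraded to a geometric cancellation — one can pair up and remove the Type II circles by finger moves and Whitney-type moves in the 5-manifold $X \times I$, using $G$ to get the needed Whitney disks and to correct their framings. This is where the Stong-style manipulation of immersions of 3-manifolds in 5-manifolds (the ``operations'' advertised in the abstract and developed earlier) does the real work: each such move is a controlled surgery on $H(S^2 \times I)$ supported near an arc, and one must check that it does not reintroduce uncontrolled intersections. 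After this stage $L$ consists only of Hopf links with both components mapping to the same image.

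The third and final stage is to kill the remaining Hopf links. By item (3) of the $\km$ description, each such Hopf link contributes its sheet-changing element of $H_1(X;\mathbb{Z}/2)$ (in the appropriate quotient), and the total is $\km(S_0,S_1) = 0$ by hypothesis. So, possibly after again using $G$ to slide and merge the Hopf-link pieces, the contributions cancel in $H_1(X;\mathbb{Z}/2)$, and — using that $S_0$ is $s$-characteristic, which controls the mod-2 normal data and rules out the framing obstruction that would otherwise obstruct the final embedded-surgery — one removes the Hopf links in pairs. Here the $s$-characteristic hypothesis plays the same role it plays throughout Stong's argument: it ensures the ambient $\mathbb{Z}/2$ intersection form behaves like a ``characteristic'' one so that the relevant twisted Whitney moves go through. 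When all of $L$ has been removed, $H$ is an embedding, hence a concordance.

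The main obstacle, and the heart of the proof, is the middle stage: organizing the moves on the immersion $H$ so that removing one family of double-point circles does not create new ones, and verifying that the algebraic hypotheses ($\fq = 0$, $\km = 0$, $\mu(\pi_3 X) = 0$, $s$-characteristic) supply precisely the framing and homotopy corrections needed at each step. In Stong's original setting (3-spheres in 5-manifolds) this bookkeeping is delicate; transplanting it to a concordance $S^2 \times I \hookrightarrow X^4 \times I$ requires carefully re-deriving which moves are available and tracking the dual sphere $G$ through all of them. I expect the bulk of the paper's technical work — and the reason the hypotheses on $X$ cannot simply be dropped — to live exactly here.
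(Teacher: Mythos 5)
Your outline follows essentially the same route as the paper: remove the type II circles using $\fq(S_0,S_1)=0$ (Proposition~\ref{fq_typeII}), use the dual sphere $G$ together with Stong's moves (clasp finger moves, ambient Dehn surgery, clean Whitney moves) to reduce the type I part of the singular link to a normal form of Hopf links and merge them into one (Proposition~\ref{prop:LHopflink}), and finally use the vanishing of the Stong invariant to remove the last Hopf link (Proposition~\ref{prop:trivialremove}). The ordering of your stages differs slightly from the paper's, but not in any essential way, and you correctly identify the roles of $G$ and of the $s$-characteristic hypothesis in controlling framings of Whitney disks and of the ambient surgeries.

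There is, however, one concrete gap: you never address the fact that both invariants live in quotients, and this is exactly where the hypothesis $\mu(\pi_3(X))=0$ is actually used. For a given singular concordance $H$, $\fq(S_0,S_1)=0$ only gives $\mu(H)\in\mu(\pi_3 X)$, and $\km(S_0,S_1)=0$ only gives $\Delta(H)\in\Delta(\Self(S_0))$; the geometric argument needs a single $H$ with $\mu(H)=0$ and $\Delta(H)=0$ simultaneously (this is precisely Theorem~\ref{thm:main3}, from which the paper deduces Theorem~\ref{thm:main2}). Killing $\mu(H)$ by connect-summing with an element of $\pi_3 X$, or killing $\Delta(H)$ by precomposing with a self-concordance, each changes the class $[H]\in H_3(X\times I, X\times\{0,1\};\mathbb{Z}\pi_1X)$ on which $\Delta$ depends, so the two normalizations can interfere. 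The hypothesis $\mu(\pi_3 X)=0$ resolves this by forcing $\mu(H')=0$ for \emph{every} singular concordance $H'$, so one may first choose $H$ with $\Delta(H)=0$ and then get $\mu(H)=0$ for free. In particular, your attribution of $\mu(\pi_3(X))=0$ to the geometric cancellation of the type II circles is misplaced: Proposition~\ref{fq_typeII} removes them from $\fq=0$ alone (at the cost of changing $[H]$), and it is the bookkeeping of $[H]$ for the $\km$ stage, not the cancellation itself, that requires the hypothesis.
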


\begin{remark}
In fact, since the map $\mu$ factors through the Hurewicz homomorphism $\pi_3(X)\twoheadrightarrow H_3(\vtilde[0pt]{X};\mathbb{Z})\cong H_3(X;\mathbb{Z}\pi_1X)$ (see~\cite[Lemma 4.2]{schneiderman_teichner}), it suffices to assume that $H_3(X;\mathbb{Z}\pi_1X)$ is trivial for the conclusion of Theorem~\ref{thm:main2} to hold.  
\end{remark}

We actually prove Theorem~\ref{thm:main2} by proving the following theorem, which is currently harder to parse.

\begin{theorem}\label{thm:main3}
Suppose that $S_0$ and $S_1$ are embedded, oriented, homotopic 2-spheres in an orientable 4-manifold $X$ such that $S_0$ has an immersed dual sphere $G$ in $X$ and $S_0$ is $s$-characteristic. Then $S_0$ and $S_1$ are concordant if and only if there exists a singular concordance $H$ from $S_0$ to $S_1$ so that $\mu(H)=0$ and $\Delta(H)=0$. 
\end{theorem}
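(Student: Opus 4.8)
The forward implication is trivial: a concordance from $S_0$ to $S_1$ is in particular a singular concordance $H$ whose self-intersection set is empty, so $\mu(H)=0$ and $\Delta(H)=0$ hold trivially. All of the content is in the reverse implication, and the plan is the standard one for this circle of results (Freedman--Quinn \cite[Chapter 10]{fq}, as corrected by Stong \cite{stong}): starting from a singular concordance $H$ with $\mu(H)=0$ and $\Delta(H)=0$, modify $H$ through a sequence of the moves on immersed $3$-manifolds in $5$-manifolds developed earlier in the paper---each move either changing $H$ by an ambient regular homotopy rel $\partial$ or replacing it by a new singular concordance for which the two invariants still vanish---until the self-intersection set is empty, at which point $H$ is an embedded $S^2\times I$, i.e., a concordance.

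First I would normalize $H$. After a general-position perturbation rel $\partial$ (keeping $H$ a product near $S^2\times\partial I$) the self-intersection set of $H$ is a disjoint union of circles in $X\times I$ whose preimage is the singular link $L\subset S^2\times I$; push the immersed dual sphere $G$ into the interior of $X\times I$ to obtain an immersed $2$-sphere $\hat G$ meeting $H(S^2\times I)$ transversely in a single point. The point of this step is that $\hat G$ lets us run Norman-trick-style tubings and the $5$-dimensional finger and Whitney moves freely, since intersections created with, or self-intersections created on, $\hat G$ can always be pushed off or absorbed. Using these, I would: (i) separate the components of $L$ so that each double circle of $H$ is either Type II (preimage a single circle double-covering its image) or Type I (preimage two circles, each embedded by $H$); (ii) cancel all Type II circles---this is exactly where the hypothesis $\mu(H)=0$ enters, $\mu(H)$ being precisely the obstruction to pairing the Type II circles against copies carrying the inverse group element and removing them, with the presence of $\hat G$ eliminating the need for a based singular concordance (compare \cite{schneiderman_teichner}, \cite{km} and Section~\ref{sec:removing_type_II}); and (iii) simplify the remaining Type I double circles by tubing and isotopy until the self-intersection set is a disjoint collection of \emph{standard} double circles---each lying in a ball of $X\times I$, with preimage a Hopf link whose two components have equal image---each carrying a well-defined element of $\pi_1 X$, equivalently a class in $H_1(X;\mathbb{Z}/2\mathbb{Z})$.

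It then remains to cancel these standard double circles. By design (this is exactly item (3) of the description of $\km$ in the introduction) the invariant $\Delta(H)$ is the sum in $H_1(X;\mathbb{Z}/2\mathbb{Z})$ of the classes carried by the standard double circles; since $\Delta(H)=0$, they can be grouped into pairs carrying equal classes together with, possibly, some carrying the trivial class, and for each pair---and each trivial circle---there is a move that removes it. Here one builds a Whitney-disk-like surface guiding two standard double circles together; its framing and intersection requirements are met by boundary twists and by tubing into parallel copies of $\hat G$, and the $s$-characteristic hypothesis on $S_0$---equivalently on $H(S^2\times I)\subset X\times I$ by Observation~\ref{lem:4to5}---guarantees that any self-intersections introduced by boundary twisting occur with even multiplicity and can themselves be traded off against $\hat G$. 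Performing all of these cancellations leaves $H$ embedded, completing the proof.

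The main obstacle is the interaction of step (iii) with the final cancellation: exactly as in Stong's correction \cite{stong} to Freedman--Quinn, one must verify that the moves used to merge and to cancel double circles do not covertly recreate Type II circles or standard double circles with nonzero class, and the bookkeeping of the accessory circles and of the pairwise linking numbers among the double circles is delicate. It is precisely to force this bookkeeping to vanish that one needs the full pre-quotient hypotheses $\mu(H)=0$ and $\Delta(H)=0$, rather than merely the vanishing of their images $\fq(S_0,S_1)$ and $\km(S_0,S_1)$; tracking the normal framing of $H$ throughout, so that the terminal embedded $S^2\times I$ carries the product normal bundle demanded of a concordance, is the other point requiring care, and is where the dual sphere and the $s$-characteristic condition do most of their work.
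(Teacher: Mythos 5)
Your overall architecture matches the paper's: trivial forward direction; for the converse, kill the type~II circles using $\mu(H)=0$, reduce the type~I circles to split Hopf links in balls, and then use $\Delta(H)=0$ to cancel what remains. The normalization and the elimination of type~II circles (Proposition~\ref{fq_typeII}) and the reduction to split Hopf links (Proposition~\ref{prop:LHopflink}) are correctly identified, and your caveat that one must check the moves preserve the invariants is the same appeal to Stong's invariance theorem that the paper makes.

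However, your final cancellation step has a genuine gap. You claim that since $\Delta(H)=0$ the standard double circles ``can be grouped into pairs carrying equal classes together with, possibly, some carrying the trivial class.'' This is false in general: in $H_1(X;\mathbb{Z}/2\mathbb{Z})\cong(\mathbb{Z}/2\mathbb{Z})^2$ the classes $a$, $b$, $a+b$ sum to zero but admit no such pairing. The paper avoids this by \emph{merging all} Hopf links into a single one (Step~(4) of Proposition~\ref{prop:LHopflink}, with the group element of the result being the product of the constituents' elements), so that $\Delta$ of the resulting $H'$ is exactly the class of that one Hopf link; Stong's invariance then forces that class to be $0$. Moreover, even for a single Hopf link whose class vanishes in $H_1(X;\mathbb{Z}/2\mathbb{Z})$, ``there is a move that removes it'' conceals real content: its $\pi_1 X$-element $g$ need not be trivial, and Whitney moves are only available between circles with equal or inverse $\pi_1$-elements --- not merely equal mod-$2$ homology classes. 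The paper's Lemma~\ref{lem:addHopfnullhomologous} (write $g$ as a word in which each generator and its inverse appear with equal parity, realize the letters as auxiliary Hopf links, and merge) together with Proposition~\ref{prop:trivialremove} (reduce to the trivial $\pi_1$-element, double the Hopf link, untwist the bands, and cancel by a clean Whitney move plus ambient surgery) is precisely what fills this hole. Finally, a minor point: a concordance is just an embedded $S^2\times I$ with the correct boundary, so there is no normal-framing condition to track at the end.
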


We give the following applications in Section~\ref{sec:example}. The first suggests the importance of constructing spheres realizing $\km$.

\begin{theorem}\label{thm:slicelink}
If there are 2-spheres $S_0,S_1$ in $B^3\times S^1$ with $\km(S_0,S_1)=1$ in $H_1(B^3\times S^1;\mathbb{Z}/2\mathbb{Z})\cong\mathbb{Z}/2\mathbb{Z}$ then there is a 2-component link of spheres in $S^4$ that is not concordant to the unlink.
\end{theorem}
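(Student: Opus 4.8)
The plan is to build the non-trivial link in $S^4$ by a satellite-type construction: use $S_0$ and $S_1$ in $B^3 \times S^1$ as "pattern" spheres, then embed $B^3 \times S^1$ into $S^4$ as a tubular neighborhood of an unknotted circle, and finally close up. First I would recall that $B^3 \times S^1$ is diffeomorphic to a tubular neighborhood $\nu(C)$ of an unknotted circle $C \subset S^4$; the complement $S^4 \setminus \nu(C)$ is $S^2 \times B^2$, and $H_1(B^3 \times S^1;\mathbb Z/2) \cong \mathbb Z/2$ is generated by $C$. Inside $\nu(C)$, the sphere $S_0$ (say) is null-homotopic in $S^4$ after reglueing, but I want to take the two-component \emph{link} $S_0 \sqcup S_0'$, where $S_0'$ is a parallel copy of $S_0$ pushed slightly in the $B^3$ direction; likewise define $S_1 \sqcup S_1'$. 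Actually, the cleaner approach: take the link $\mathcal L_i := S_i \sqcup (S^2 \times \{\mathrm{pt}\})$, where the second component is a meridian $2$-sphere of $C$ (a fiber of the $S^2 \times B^2$ complement pushed to the boundary). Both components are unknotted and unlinked-looking in $S^4$ when $i$ is "trivial," but the point is to compare $\mathcal L_0$ and $\mathcal L_1$.

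The key step is the following reduction. Suppose, for contradiction, that $\mathcal L_0$ (or the appropriate link built from $S_0, S_1$) \emph{is} concordant to the unlink in $S^4$. I would use the meridian $2$-sphere component: it serves as an embedded dual sphere to $S_i$ after we excise its neighborhood, so that a concordance of the link restricts to a singular concordance (in fact an honest concordance, living in $(B^3\times S^1)\times I$ after removing a neighborhood of the dual-sphere component's concordance) between $S_0$ and $S_1$ inside $B^3 \times S^1$. Here I would invoke the light-bulb-type trick: removing an open neighborhood of the dual-sphere-component and its concordance turns $S^4 \times I$ back into $(B^3\times S^1)\times I$, and a link concordance to the unlink, restricted to the $S_i$-components, becomes a concordance from $S_0$ to $S_1$ in $B^3\times S^1$. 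But $\km(S_0,S_1) = 1 \neq 0$, so by item (2) in the description of $\km$ (respectively Theorem~\ref{thm:main3}), $S_0$ and $S_1$ are \emph{not} concordant in $B^3\times S^1$ — a contradiction. Hence $\mathcal L_0$ is not concordant to the unlink.

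The main obstacle I anticipate is making the "remove the dual sphere's neighborhood" step rigorous at the level of \emph{concordances}, not just of the spheres themselves: one must check that a concordance of the meridian-$2$-sphere component to a standard unknotted sphere, inside $S^4 \times I$, has a neighborhood whose complement is diffeomorphic to $(B^3 \times S^1) \times I$ rel boundary, and that this diffeomorphism is compatible with the product structures at the two ends so that the restricted concordance of $S_i$-components genuinely lands in $(B^3\times S^1)\times I$. This is a relative (parametrized) version of the identification $S^4 \setminus \nu(\text{unknot}) \cong B^3\times S^1$ — essentially an isotopy-extension / uniqueness-of-tubular-neighborhoods argument for the concordance of the dual component, using that any two concordances of unknotted $2$-spheres with a common dual are themselves "unknotted." I would also need to be careful that the two spheres $S_0, S_1$ in $B^3\times S^1$ are homotopic (so that $\km$ is even defined) and remain disjoint from the chosen meridian component; arranging this by an initial small isotopy should be routine. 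Once the complement identification is in hand, the contradiction with Theorem~\ref{thm:main3} (or property (2) of $\km$) closes the argument.
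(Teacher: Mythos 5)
Your construction of the link is essentially the paper's ($B^3\times S^1$ sits in $S^4$ as the complement of an unknotted $2$-sphere $U$, equivalently as $\nu(C)$ for an unknot $C$, and the second link component is that unknotted sphere/meridian), but the step you yourself flag as the ``main obstacle'' is a genuine gap, and the resolution you sketch for it does not work. You propose to show that the complement of (a tubular neighborhood of) the concordance of the meridian component in $S^4\times I$ is diffeomorphic to $(B^3\times S^1)\times I$ rel boundary, ``using that any two concordances of unknotted $2$-spheres with a common dual are themselves unknotted.'' No such unknotting statement is available here: a hypothetical concordance of the unlink could move the $U$-component by a \emph{nontrivial} self-concordance $H_U$, and the complement $W=S^4\times I\setminus\nu(\Im H_U)$ need not be a product. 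The paper explicitly raises this exact issue in the discussion following the theorem (``In theory there could be a concordance \dots in which the self-concordance $H$ of $U$ is nontrivial''), so an argument that presumes a product complement is circular or unproven.

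The paper's proof is built precisely to avoid that identification. It first reduces (WLOG) to the case where one of the two spheres is the unknotted sphere $S$ in $B^3\times S^1$, so that $U\sqcup S_0$ really is the unlink; it also disposes of the basepoint issue (no dual sphere is present) by preconcatenating with a self-concordance whose basepoint track generates $\pi_1(B^3\times S^1)$. Then, given disjoint concordances $H_U$ of $U$ and $H$ of $S_0$ to $S_1$, it sets $W=S^4\times I\setminus\nu(\Im H_U)$ and lets $W'$ be the collar piece $W\cap(S^4\times[1-\epsilon,1])\cong (B^3\times S^1)\times I$. An immersed $3$-ball $B\subset W'$ bounded by $S_1$ has $\Delta(B)=1$ because $\km(S_0,S_1)=1$ can be computed in $W'$; since $H_1(W';\mathbb{Z}/2\mathbb{Z})\to H_1(W;\mathbb{Z}/2\mathbb{Z})$ is an isomorphism, the Stong obstruction persists in the (a priori non-product) $5$-manifold $W$, contradicting the existence of $H\subset W$. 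To repair your argument you would need to replace the ``complement is a product'' step with this homological transfer of the obstruction into the general $5$-manifold $W$ (in the spirit of Stong's original setting), rather than trying to trivialize $H_U$.
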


We give examples of pairs of homotopic 2-spheres with specified Stong invariant in some other 4-manifolds in Example \ref{km_example} (actually summarizing our previous work \cite{km} and adding some more discussion), but not in $B^3\times S^1$.

\begin{corollary}[Corollary of Theorems~\ref{thm:main1} and~\ref{thm:main2}]\label{cor:finite}
Let $S_0$ be an oriented embedded 2-sphere in an orientable 4-manifold $X$ with an immersed dual sphere $G$.  Let $\Concordance(S_0)$ be the set of concordance classes of embedded spheres in $X$ that are homotopic to $S_0$. Suppose that $\pi_1 X$ has a finite number of 2-torsion elements.
\begin{enumerate}
    \item Suppose $S_0$ is not $s$-characteristic.  Then $\Concordance(S_0)$ is finite of size at most $2^{|T_X|}$.
    \item Suppose $S_0$ is $s$-characteristic and that $\mu(\pi_3 X) = 0$.  Then $\Concordance(S_0)$ is finite of size at most $2^{|T_X|} \cdot |H_1(X; \mathbb{Z}/2\mathbb{Z})|$.
\end{enumerate}
\end{corollary}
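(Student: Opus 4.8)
The plan is to bundle $\fq$ and, in the $s$-characteristic case, $\km$ into a single invariant of $\Concordance(S_0)$ and to show that this invariant is injective into a set of the advertised size. Note first that for every embedded sphere $S$ homotopic to $S_0$ there is a singular concordance from $S_0$ to $S$ (the trace of a regular homotopy), so $\fq(S_0,S)$ is defined, and $\km(S_0,S)$ is defined exactly when $\fq(S_0,S)=0$.

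First I would check that these invariants descend to concordance classes: if $S$ and $S'$ are concordant, splicing the concordance onto a singular concordance from $S_0$ to $S$ gives a singular concordance from $S_0$ to $S'$ with the same singular link, so $\fq(S_0,S)=\fq(S_0,S')$ and (when defined) $\km(S_0,S)=\km(S_0,S')$. Next I would establish additivity under concatenation: given $S_1,S_2$ homotopic to $S_0$, concatenating the reverse of a singular concordance from $S_0$ to $S_1$ with a singular concordance from $S_0$ to $S_2$ produces a singular concordance from $S_1$ to $S_2$ whose singular link is the split union of the two singular links; reading off the definitions, and using that the loops recorded by $\fq$ and $\km$ are $2$-torsion in $\pi_1X$ so that reversal acts trivially on the (additively written, $2$-torsion) targets, this yields $\fq(S_1,S_2)=\fq(S_0,S_1)+\fq(S_0,S_2)$, and likewise $\km(S_1,S_2)=\km(S_0,S_1)+\km(S_0,S_2)$ whenever all three terms are defined; the same identities hold with any sphere homotopic to $S_0$ in place of $S_0$. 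I would also record that being $s$-characteristic depends only on the homology class (so that every $S$ homotopic to $S_0$ is $s$-characteristic iff $S_0$ is), and that having an immersed dual sphere passes from $S_0$ to every homotopic $S$ — directly, or by noting that a dual sphere for $S_0$ pushed slightly into $X\times I$ is a dual sphere for any singular concordance starting at $S_0$, which is the only way dual spheres enter the arguments of Theorems~\ref{thm:main1} and~\ref{thm:main2}. Hence Theorems~\ref{thm:main1} and~\ref{thm:main2} apply to any pair of spheres homotopic to $S_0$.

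Granting this, the counting goes as follows. In the non-$s$-characteristic case, Theorem~\ref{thm:main1} together with additivity and $2x=0$ gives $S_1\sim S_2\iff\fq(S_1,S_2)=0\iff\fq(S_0,S_1)=\fq(S_0,S_2)$, so $[S]\mapsto\fq(S_0,S)$ is injective on $\Concordance(S_0)$; since the target of $\fq$ is a quotient of an $\F_2$-vector space indexed by a subset of the (finitely many) $2$-torsion elements of $\pi_1X$, it has at most $2^{|T_X|}$ elements, which bounds $|\Concordance(S_0)|$. In the $s$-characteristic case (with $\mu(\pi_3X)=0$), I would partition $\Concordance(S_0)$ according to the value $v$ of $\fq(S_0,-)$ — at most $2^{|T_X|}$ values — choose a representative $S_v$ for each, observe that $\fq(S_v,S)=0$ for every $S$ with $\fq(S_0,S)=v$ so that $\km(S_v,-)$ is defined on this fiber, and then conclude from Theorem~\ref{thm:main2} and additivity (with base $S_v$) that within the fiber $S\sim S'\iff\km(S_v,S)=\km(S_v,S')$. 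Since $\km(S_v,-)$ takes values in a quotient of $H_1(X;\Z/2\Z)$, each fiber contains at most $|H_1(X;\Z/2\Z)|$ concordance classes, so $|\Concordance(S_0)|\le 2^{|T_X|}\cdot|H_1(X;\Z/2\Z)|$.

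The hard part will be the transport step: making sure Theorems~\ref{thm:main1} and~\ref{thm:main2} genuinely apply to an arbitrary pair of homotopic spheres, i.e.\ that ``admitting an immersed dual sphere'' is a homotopy-invariant condition on the sphere (equivalently, that the proofs only use a dual sphere for the ambient singular concordance), together with the bookkeeping in the $s$-characteristic case needed to land on exactly $2^{|T_X|}\cdot|H_1(X;\Z/2\Z)|$ rather than a slightly larger bound. The additivity statement and the identification of the ranges of $\fq$ and $\km$ with the asserted finite sets, though essentially formal, should also be checked against the precise definitions of these invariants in Sections~\ref{sec:removing_type_II} and~\ref{sec:km}.
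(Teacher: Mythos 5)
Your proposal is correct and follows essentially the same route as the paper: additivity of $\fq$ (and $\km$) under stacking singular concordances through $S_0$, transport of the dual sphere $G$ to the stacked concordance, and then Theorems~\ref{thm:main1} and~\ref{thm:main2} applied to each pair $S_1,S_2$ to show that the pair $(\fq(S_0,\cdot),\km(S_0,\cdot))$ is a complete invariant, with the ranges bounded by $2^{|T_X|}$ and $2^{|T_X|}\cdot|H_1(X;\mathbb{Z}/2\mathbb{Z})|$. The only difference is that you spell out the fiber-by-fiber counting in the $s$-characteristic case, which the paper leaves implicit.
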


Theorems~\ref{thm:main1},~\ref{thm:main2},~\ref{thm:main3} are greatly indebted to Richard Stong and closely follow the arguments given by Stong for the case of 3-spheres in a 5-manifold~\cite{stong}, and the non-$s$-characteristic cases follow from Freedman and Quinn~\cite{fq}.  Stong's work draws many ideas from and corrects an omission in the work of Freedman and Quinn.  Stong and Freedman--Quinn develop several manipulations of 3-manifolds immersed in 5-manifolds (in our setting, this will be the image of $H$ inside of $X \times I$) that allow for the modification of a singular link. Our construction of the $\km$ invariant closely follows Stong's, including making use of his many explicit operations for modifying the immersion so as to modify the singular link in a prescribed manner.  In particular, all of our moves described in Section~\ref{sec:moves} are contained in Stong's work~\cite{stong}.  We provide many details, add further exposition of his work, and correct some minor errors.  In addition, Propositions~\ref{fq_typeII},~\ref{prop:LHopflink},~\ref{prop:trivialremove}, and Lemmas~\ref{lem:notschar},~\ref{lem:Z/2_abelianization},~\ref{lem:addHopfnullhomologous}, as well as the definition of $\km$ are all due to Stong~\cite{stong} (often drawing from~\cite{fq}) in the closely related context of 3-spheres in 5-manifolds.  



\subsection*{Historical overview}We now give a short overview of the previous results leading up to this work, together with an overview of some of the relevant definitions and an outline of the present work.  All results hold in both the smooth and topological category. Any obstructions to concordance obstruct even locally flat concordances, and if $S_0$ and $S_1$ are smoothly embedded 2-spheres then any constructed concordance may be taken to be smooth (while if $S_0$ and $S_1$ are locally flat, then the constructed concordance can be taken only to be locally flat). This holds true for the remainder of the paper; the reader may choose to live in either the topological or smooth category.

Kervaire~\cite{kervaire} proved that, in sharp contrast to the analogous situation in dimension 3, all embedded 2-spheres in $S^4$ are concordant. This was later generalized to pairs of homologous genus-$g$ surfaces in simply-connected 4-manifolds by Sunukjian~\cite{sunukjian}, to the case of homotopic 2-spheres in non-simply-connected 4-manifolds with the additional hypotheses that $\fq(S_0,S_1)=0$ and $S_0$ has a framed dual by Freedman--Quinn~\cite{fq} (see also the present authors' previous work~\cite{km} for alternate proof), and to the case that $S_0, S_1$ are $\pi_1$-trivial homotopic positive-genus surfaces related by a homotopy $H$ with $\mu(H)\in\mu(\pi_3(X))$
(which holds automatically when e.g.,\ either $\pi_1(X)$ has no 2-torsion or $H_3(X;\mathbb{Z}\pi_1X)=0$) and $S_0$ has a framed dual sphere by the present authors~\cite{km}. 
In this paper, we will recover a more general version of the main theorem of~\cite{km} in the case of 2-spheres.


Schneiderman and Teichner~\cite{schneiderman_teichner} give an alternate proof of Gabai's lightbulb theorem~\cite{gabai} by way of starting with a concordance between two homotopic 2-spheres $S_0,S_1$ in a 4-manifold $X$ and modifying the concordance to produce an isotopy. They assume that there is another 3-sphere $G$ embedded in $X$ with trivial normal bundle such that each $S_i$ intersects $G$ transversely once, and that $\fq(S_0,S_1)=0$. They invoke work of Stong~\cite{stong} (here making use of the Freedman--Quinn invariant vanishing) to obtain a concordance from $S_0$ to $S_1$ in $X\times I$ such that in each $X\times\pt$ the concordance intersects $G\times\pt$ transversely once. Using these embedded dual spheres at every level, Schneiderman and Teichner are able to convert the concordance into an isotopy and thus re-prove the lightbulb theorem (which says that $S_0$ and $S_1$ are isotopic).  

Our work here is an exposition of the techniques in Stong~\cite{stong} and Freedman--Quinn~\cite{fq} (in particular Section 10.7) together with clarification of some of the arguments and discussion of how these arguments apply to questions regarding concordance of 2-spheres in 4-manifolds.

\subsection*{Outline}
In Section~\ref{sec:moves}, we describe many moves for modifying a $\pi_1$-trivial immersion $H : Y^3 \to W^5$. In Section~\ref{sec:fqkm} we discuss the Freedman--Quinn and Stong invariants. In Section~\ref{sec:reduce_circles}, we move to the setting of singular concordances with dual spheres and prove Theorems~\ref{thm:main1},~\ref{thm:main2}, and~\ref{thm:main3}. 
In Section~\ref{sec:example} we give some examples, discuss the relationship to concordance of links of 2-spheres, and prove the finiteness (with explicit upper bounds) of the set of concordance classes of embedded 2-spheres in a given homotopy class under some conditions.


\subsection*{Acknowledgements}
This work would not have been possible without Rob Schneiderman's introduction to and overview of Richard Stong's work over Zoom in Fall 2020.  In addition, we would like to thank Rob Schneiderman and Peter Teichner for their encouragement and helpful technical comments that shaped these results and Mark Powell for his remarks that resulted in Section~\ref{sec:problems}, in which we describe an error in~\cite[Lemma 6.1]{km} (and hence~\cite[Proposition 6.2]{km}). See Section~\ref{sec:problems} for details.

We would like to further emphasize that many of the constructions in this paper follow the work of Stong in~\cite{stong}. While we often give more details (see for example Sections~\ref{sec:whitney},~\ref{sec:cleanwhitney}, and~\ref{sec:gluingwhitneydisks} on Whitney disks) or expand upon Stong's constructive techniques while adapting them to the setting of concordance, the importance of Stong's work in this paper cannot be overstated.

\tableofcontents

\addtocontents{toc}{\protect\setcounter{tocdepth}{2}}

\section{Moves for modifying the immersion \texorpdfstring{$H$}{H}} 

\label{sec:moves}

Given connected manifolds $Y$ and $W$, a map $H : 
Y \to W$ is \emph{$\pi_1$-trivial} if the resulting map on $\pi_1$ is trivial.  A connected submanifold $Y$ of $W$ is \emph{$\pi_1$-trivial} if the inclusion map $i : Y \to W$ is $\pi_1$-trivial.  

Let $Y^3$ be an oriented, compact 3-manfold, $W^5$ an orientable 5-manifold, and $H : Y \to W$ a $\pi_1$-trivial, regular immersion that restricts to an embedding $\partial Y\to\partial W$. The preimage of the self-intersection set of $H$ is a link $L$ in $Y$. We refer to $L$ as the \emph{singular link} of $Y$ and the components of $L$ as \emph{singular circles}.  In this section, we give several techniques for modifying $H$ so as to modify its singular link.  In order to assign group elements to the singular circles we will need to choose a basepoint $z$ for $Y$ and specify $H(z)$ to be the basepoint of $W$. We take $z$ to be in the boundary of $Y$, which we keep constant in all moves on $H$ discussed in this section. We may write $z$ to denote $H(z)$; the context should make it clear whether we mean the basepoint of $Y$ or $W$.



\begin{remark}\label{rem:remzpi1class}
The moves we discuss in this section change the immersion $H$ while fixing $H_{\partial S^2\times I}$ and preserving regularity and $\pi_1$-triviality of $H$.

By fixing a lift of the basepoint $z$ to the universal cover $\smash{\vtilde[0pt]{W}}$, we may consider $[H]\in H_3(W,H(\partial Y);\mathbb{Z}\pi_1 W)=H_3(\vtilde[0pt]{W},\smash{\vtilde[-.3mm]{H(\partial Y)}};\mathbb{Z})$, where $\smash{\widetilde{W}}$ is the universal cover of $W$ and $\smash{\vtilde[-.3mm]{H(\partial Y)}}$ consists of the lifts of $H(\partial Y)$ to $\smash{\widetilde{W}}$. The moves performed to $H$ discussed in this section also preserve this homology class.

In fact, all of the moves simply change $H$ by regular homotopy except for the ambient Dehn surgery operation in Section~\ref{sec:ambientdehn}, in which case we surger the image of $H$ along a 4-dimensional 2-handle and thus still preserve $\pi_1$-triviality and the $H_3(W,H(\partial Y);\mathbb{Z}\pi_1 W)$ homology class of the underlying immersion.

This will be important in later sections of the paper, when we will modify a singular concordance $H:S^2\times I\to X\times I$ while needing to preserve $[H]\in H_3(X\times I,X\times\{0,1\};\mathbb{Z}\pi_1X)$.
\end{remark}

To give a 4-dimensional motivation for this discussion, consider the following situation.  Let $S_0$ and $S_1$ be homotopic, oriented, embedded 2-spheres in a 4-manifold $X$. Let $H$ be a singular concordance from $S_0$ to $S_1$.  For example, $H$ can be the track of a regular homotopy from $S_0$ to $S_1$.  We would like to know when we can modify $H$ in the interior of $S^2\times I$ to obtain an embedding and thus obtain a concordance between $S_0$ and $S_1$.  In this case, we have $Y = S^2 \times I$ and $W = X \times I$.

\subsection{Preliminaries: the singular link}

A singular circle $A$ in the singular link $L$ of $H$ can be one of two types.
  \begin{enumerate}[label=Type \Roman*:,leftmargin=1in]
      \item We say $A$ is {\emph{type I}} if $H|_A$ is a homeomorphism. In this case, there is another singular circle $A'$ with $H(A)=H(A')$. We call $A'$ the {\emph{dual}} of $A$, and refer to $A$ and $A'$ together as a {\emph{dual pair}}.
      \item We say $A$ is {\emph{type II}} if $A$ double covers its image under $H$.
  \end{enumerate}



There is a natural (up to taking inverses) way of assigning an element of $\pi_1 W$ to every singular circle in $L$.

\begin{definition}

Let $A$ be in $L$. If $A$ is type I, then let $A'$ be its dual circle; if $A$ is type II then let $A'=A$.  Fix $x \in A, x' \in A'$ with $H(x) = H(x')$.

Let $\eta$ and $\eta'$ be directed arcs from the basepoint $z\in Y$ to $x$ and $x'$ (respectively), with the interiors of $\eta$ and $\eta'$ disjoint from $L$. Then $H(\eta)H(\overline{\eta'})$ is a closed loop in $X$ representing an element $a\in\pi_1 W$. We call $a$ the {\emph{group element associated to $A$ and $A'$}}.

Note that the choice of $\eta$ and $\eta'$ do not affect the class of $a$ in $\pi_1 W$, since $H$ is $\pi_1$-trivial. However, exchanging the roles of $x$ and $x'$ will replace $a$ by $a^{-1}$. In order to make sure $a$ is well defined, when $A$ is type I we fix an ordering of $A$ and $A'$, calling $A$ {\emph{active}} and $A'$ {\emph{inactive}}. Thus, when $A,A'$ are a dual type I pair, we construct a loop representing $a$ by traveling from the $z$ through $H(Y)$ to $H(A)=H(A')$ in the sheet containing $A$, and then leaving through the sheet containing $A'$ and returning to the $z$.

\end{definition}

From now on, we will label a dual pair of circles by a capital letter and the same capital letter prime, with the unprimed letter denoting the active circle and the primed denoting the inactive circle.

\begin{remark}
If $A$ is type II, then we may choose $\eta,\eta'$ so that 
$H(\eta)H(\overline{\eta'})$ is parallel to a copy of $H(A)$ (with a whisker chosen to the basepoint). Since $H$ is $\pi_1$-trivial and $A$ double covers $H(A)$ under $H$, we conclude that $a^2=1$. This fact, will be important in Section~\ref{sec:reduce_circles}.
\end{remark}

\subsection{Whitney moves} \label{sec:whitney}

We begin with the most involved move changing $H$, in which we homotope $H$ via a {\emph{Whitney disk}}. To improve readability, we divide this subsection into several pieces: finding a disk (\S\ref{sec:finddisks}), orientation restrictions on the framed boundary of a Whitney disk (\S\ref{sec:whitneyorient}), the $\omega_2$ obstruction to a framing of a circle extending over a bounded disk (\S\ref{sec:w2obstruction}), and some final remarks summarizing this section (\S\ref{sec:whitneyfinalremarks}). For now, we do not assume that the image of $H$ has an immersed dual -- we will add this assumption in Section~\ref{sec:cleanwhitney} and see how it simplifies the situation.

\subsubsection{Finding a disk}\label{sec:finddisks}

Let $A$ and $B$ be singular circles in $L$ with corresponding fundamental group elements $a=b$. Let $A',B'$ be the duals of $A$ and $B$ (if $A$ or $B$ is type II, then $A'$ or $B'$ (respectively) is simply equal to $A$ or $B$).

Let $x\in A$, $x'\in A'$, $y\in B$, $y'\in B'$ with $H(x)=H(x')$ and $H(y)=H(y')$. Let $\gamma_1$ be an arc in $Y$ from $x$ to $y$ that is disjoint in its interior from $L$ and let $\gamma_2$ be an arc in $Y$ from $y'$ to $x'$ that is disjoint in its interior from $L$. Let $\eta$ be an arc from the basepoint of $Y$ to $x$. Then $H(\eta)H(\gamma_1)H(\gamma_2)H(\overline{\eta})$ is a based loop in $X\times I$ representing $ab^{-1}=1\in\pi_1 W$ (see Figure~\ref{fig:gxgy}). We conclude that the unbased loop $H(\gamma_1)\cup H(\gamma_2)$ is nullhomotopic.


Similarly, if $a=b^{-1}$ and $\gamma_1$ runs from $x$ to $y'$ while $\gamma_2$ runs from $y$ to $x'$, then $H(\gamma_1)\cup H(\gamma_2)$ is a nullhomotopic loop in $X\times I$.

\begin{figure}
    \centering
    \labellist
\small\hair 2pt
\pinlabel $x'$ at 20 135
\pinlabel $y'$ at 65 135
\pinlabel $x$ at 75 38
\pinlabel $y$ at 120 37
\pinlabel $\gamma_2$ at 50 110
\pinlabel $\gamma_1$ at 100 40
\pinlabel $H$ at 165 110
\pinlabel $a$ at 334 60
\pinlabel $b$ at 348 60
\pinlabel $H(\gamma_1)\cup H(\gamma_2)$ at 442 130
\endlabellist
    \includegraphics[width=120mm]{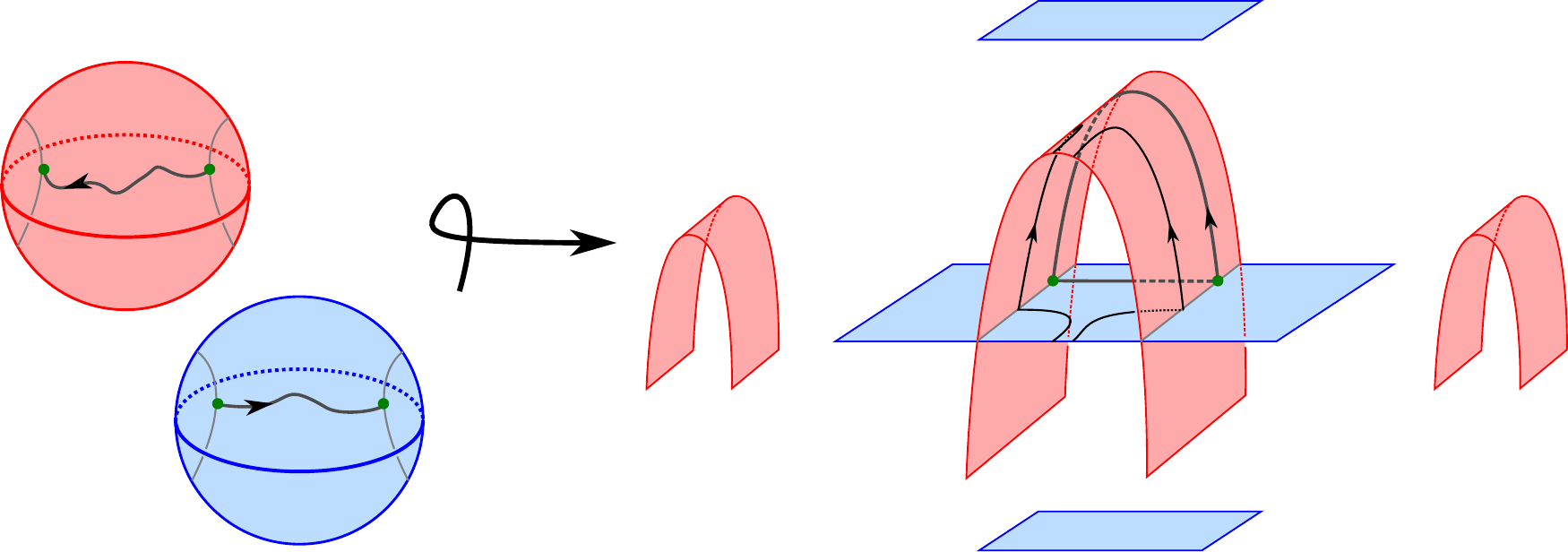}
    \caption{On the left, we draw two balls in $Y$ that contain points $x,y$ and $x',y'$ with $H(x)=H(x')$ and $H(y)=H(y')$. We draw arc $\gamma_1$ from $x$ to $y$ and $\gamma_2$ from $y'$ to $x'$. On the right, we see curves representing $a$ and $b$ that agree outside of the pictured region. We see that the closed curve $H(\gamma_1)\cup H(\gamma_2)$ is freely homotopic to a based curve representing $ab^{-1}$. Thus, if $a=b$, then $H(\gamma_1)\cup H(\gamma_2)$ is nullhomotopic.}
    \label{fig:gxgy}
\end{figure}
In either case, when $H(\gamma_1)\cup H(\gamma_2)$ is nullhomotopic it follows that there is an embedded disk $D\subset W$ with $\partial D=H(\gamma_1)\cup H(\gamma_2)$. Note that the interior of $D$ may not be disjoint from $H(Y)$.


We would like to have a local model as in Figure~\ref{fig:uncleanwhitney} so that we can perform the Whitney move across $D$. When this model exists, we call $D$ a {\emph{Whitney disk.}}

\begin{figure}
    \centering
    \includegraphics[width=130mm]{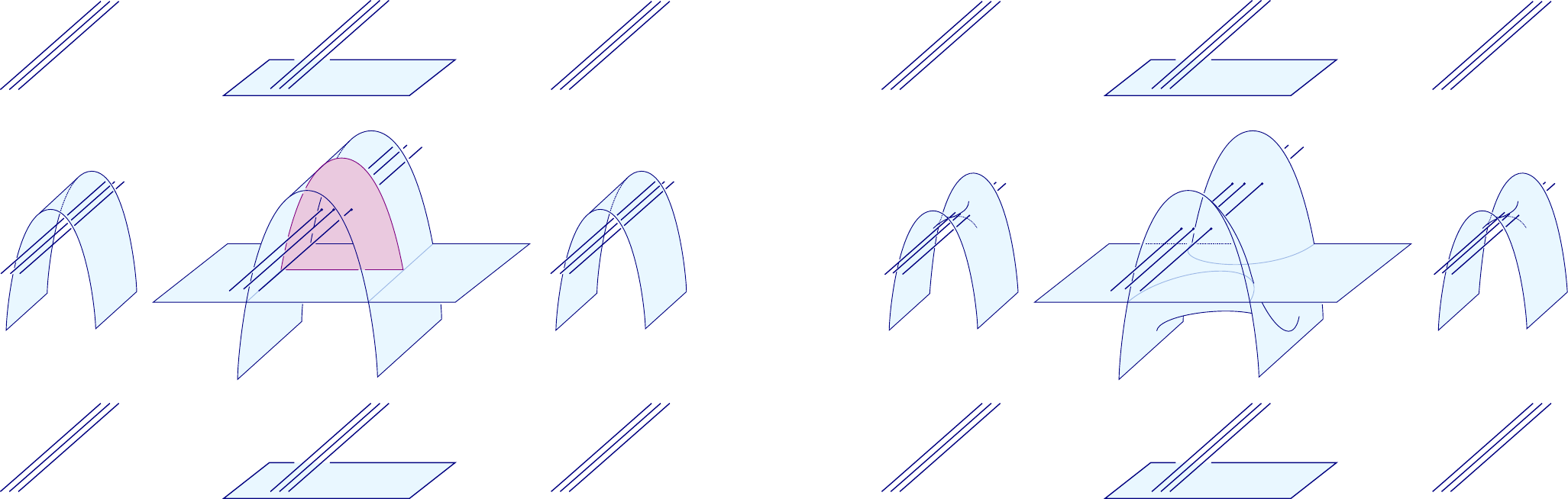}
    \caption{Left: we say a disk in $X$ with boundary in $H(Y)$ is a {\emph{Whitney disk}} if it admits a neighborhood as pictured. Right: We may perform a {\emph{Whitney move}} along a Whitney disk to change $H$ into a different regular immersion of $Y$ with a different self-intersection set.}
    \label{fig:uncleanwhitney}
\end{figure}

After performing the Whitney move along $D$, the effect to $L$ is the following (illustrated in Figure~\ref{fig:toothpickeffect}).
\begin{enumerate}
    \item We perform band surgery along $b_1$ and $b_2$ (these bands come from thickening the arcs comprising $H^{-1}(\partial D)$ via a framing of $D$; see Figure~\ref{fig:b1b2}).
    \item For each intersection of $H(Y)$ with the interior of $D$, we add a new pair of type I circles with trivial group element. One of these circles is a belt around $b_1$ while the other is unlinked from all components of $L$. The pairs of circles arising from multiple intersections of $H(Y)$ with $D$ are parallel, as in Figure~\ref{fig:toothpickeffect}.
\end{enumerate}

\begin{figure}
   \labellist
\small\hair 2pt
\pinlabel $\textcolor{blue}{b_2}$ at 10 25
\pinlabel $\textcolor{darkred}{b_1}$ at 65 25
\pinlabel {Whitney} at 110 27
\pinlabel {\tiny{$A_1$}} at 165 30
\pinlabel {\tiny{$A_2$}} at 165 23
\pinlabel {\tiny{$A_n$}} at 165 12
\pinlabel {\tiny{$A'_1$}} at 247.5 30
\pinlabel {\tiny{$A'_2$}} at 247.5 23
\pinlabel {\tiny{$A'_n$}} at 247.5 12
\endlabellist
    \centering
    \includegraphics[width=100mm]{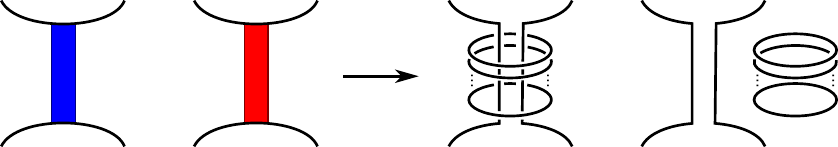}
    \caption{We illustrate the effect that performing a Whitney move has on the singular link $L$. The pictured bands $b_1$ and $b_2$ have image under $H$ as in Figure~\ref{fig:b1b2}, relative to the Whitney disk. The circles $A_1,\ldots, A_n$ are in correspondence with intersections of $H(Y)$ with the interior of the Whitney disk.}
    \label{fig:toothpickeffect}
\end{figure}

Because our goal is to modify $L$ in a prescribed way, we need to know when we can preordain the bands $b_1$ and $b_2$ in $Y$ with respective cores $\gamma_1$ and $\gamma_2$ so that $H(b_1)$ and $H(b_2)$ that appear in Figure~\ref{fig:b1b2}. That is, we need to understand what framings of $\gamma_1,\gamma_2$ give such bands.

\begin{figure}
    \centering
     \labellist
\small\hair 2pt
\pinlabel \textcolor{darkred}{$H(b_1)$} at 63 34
\pinlabel \rotatebox{-55}{\textcolor{blue}{$H(b_2)$}} at 91 77
\pinlabel $D$ at 67 60
\endlabellist
    \includegraphics[width=60mm]{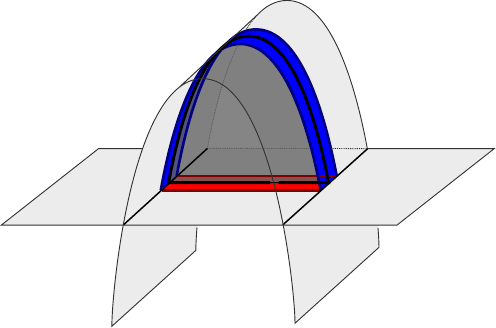}
    \caption{We need to know when bands $b_1$ and $b_2$ can be chosen so that $H(b_1)$ and $H(b_2)$ are as pictured.}
    \label{fig:b1b2}
\end{figure}

Note that when $D$ has the local model of Figure~\ref{fig:uncleanwhitney} or~\ref{fig:b1b2}, the union $H(b_1 \cup b_2)$ is an annulus (as opposed to a M\"obius band) -- this is the first restriction. In addition, let $l_1$ and $l_2$ be 1-dimensional subbundles of the normal bundle of $\partial D$ in $W$ so that along the core of $b_i$, the fiber of $l_i$ is tangent to $H(Y)$ and along the core of $b_{2-i}$, $l_i$ is perpendicular to $H(Y)$. When $D$ is as in Figure~\ref{fig:uncleanwhitney}, both of these line bundles $l_1, l_2$ are trivial -- this is the second restriction.

Further, $H(b_1 \cup b_2)$, $l_1$, and $l_2$ form the normal bundle of $D$ restricted to its boundary $H(\gamma_1) \cup H(\gamma_2)$ which we denote by $N_D|_{\partial D}$.  Any choice of trivialization of $H(b_1\cup b_2)$, $l_1$, and $l_2$ (in that order) yields a trivialization of $N_D|_{\partial D}$. This trivialization must extend to a trivialization of the normal bundle of $D$ -- this is the third restriction.

\begin{definition}\label{def:framedbands}Given bands $b_1$ and $b_2$ thickening $H^{-1}(\partial D)$ for $D$ a disk in $W$, we say that $b_1$ and $b_2$ are {\emph{framed bands for $D$}} when all three of the following are true.
\begin{enumerate}
    \item $H(b_1\cup b_2)$ is an annulus.
    \item The line bundles $l_1, l_2$ are trivial.
    \item A choice of trivialization of each of $H(b_1\cup b_2),l_1,l_2$ as line bundles over $\partial D$ extends to a trivialization of the normal bundle of $D$ in $W$.
\end{enumerate}
By the above discussion, if a Whitney move along $D$ using bands $b_1, b_2$ is possible, then $b_1, b_2$ must be framed bands for $D$. Moreover, so long as $b_1, b_2$ are framed bands for $D$, we may use the obtained trivialization of the normal neighborhood of $D$ to obtain the local model of Figure~\ref{fig:uncleanwhitney} and perform a Whitney move along $D$.
\end{definition}

In Figure~\ref{fig:lattice}, we show the effect of changing either $b_1$ or $b_2$ by adding a half-twist on each of the conditions of Definition~\ref{def:framedbands}.
\begin{itemize}
    \item 
Adding a half-twist to either $b_1$ or $b_2$ changes the orientability of $H(b_1 \cup b_2)$.
\item Adding a half-twist to $b_i$ changes orientability of $l_i$ while preserving orientability of $l_j$ for $\{i,j\}=\{1,2\}$. 
\item When $H(b_1\cup b_2), l_1, l_2$ are all orientable, adding a full twist to one of $b_1, b_2$ changes whether an induced trivialization of $N_D|_{\partial D}$ extends over all of $D$. We discuss this further in Section~\ref{sec:w2obstruction}.
\end{itemize}

The remainder of Section~\ref{sec:whitney} is a study in how to know when the three conditions of Definition~\ref{def:framedbands} hold given bands $b_1,b_2$ and a Whitney disk $D$; i.e., how to determine when $b_1,b_2$ are framed with respect to $D$. A casual reader should feel more than welcome to skip ahead to Section~\ref{sec:cleanwhitney}.

\subsubsection{Choosing the bands to satisfy orientability conditions}\label{sec:whitneyorient}

For now, we provide some remarks on how to understand the restrictions that $H(b_1\cup b_2)$ be an annulus and $L_1,L_2$ be orientable from the perspective of working within $Y$.

\begin{remark}\label{rem:checkannulus}
We can check whether $H(b_1\cup b_2)$ is an annulus by orienting the curves in $L$. Orient each active and type II circle; we obtain induced orientations on inactive circles by requiring that dual pairs induce the same orientations on their image.  Then $H(b_1\cup b_2)$ is an annulus if $b_1$ and $b_2$ are either both orientation-preserving or both orientation-breaking as bands (i.e., the orientations on the singular circles can be extended across both $b_1$ and $b_2$, or neither $b_1$ nor $b_2$.) 
In contrast, $H(b_1\cup b_2)$ is a M\"{o}bius band when one of $b_1,b_2$ is orientation-preserving while the other is orientation-breaking. 
\end{remark}

When checking whether $H(b_1\cup b_2)$ is an annulus as in Remark~\ref{rem:checkannulus}, we can choose the orientations of the singular circles containing $x$ and $y$ independently (assuming these are not the same curve or dual curves, in which case one choice determines the other). 

\begin{remark}\label{rem:l1l2}
In Figure~\ref{fig:l1l2orientablesign}, we illustrate how, given that $H(b_1\cup b_2)$ is an annulus, the question of whether $l_1$ and $l_2$ are orientable is equivalent to whether two intersections have the same sign. More precisely, fix positive bases $(v_1,v_2,v_3)$ and $(v_3,v_4,v_5)$ of the tangent space of $Y$ at a point near an end of $b_1$ and the corresponding end of $b_2$. 
Choose these bases so that the image of $v_3$ near $b_1$ and the image of $v_3$ near $b_2$ agree under $H$ as in Figure~\ref{fig:l1l2orientablesign} (hence the naming convention). Now use $b_1$ and $b_2$ to transport these bases to the opposite ends of the bands, obtaining positive bases $(w_1,w_2,w_3)$ and $(w_3,w_4,w_5)$ respectively. (Note that implicitly we use the fact that $H(b_1 \cup b_2)$ is an annulus for this to make sense -- this property tells us that the two transports of $v_3$ will have the same image $w_3$ under $H$.) In Figure~\ref{fig:l1l2orientablesign}, we see that  $l_1$ and $l_2$ are orientable when the signs of the bases $(v_1,v_2,v_3,v_4,v_5)$ and $(w_1,w_2,w_3,w_4,w_5)$ of for the tangent space of $W$ disagree (left of Figure~\ref{fig:l1l2orientablesign}) and that $l_1$ and $l_2$ are non-orientable when the signs of these bases agree (right of Figure~\ref{fig:l1l2orientablesign}). 
Or more simply, following Remark~\ref{rem:checkannulus}: orient the singular circle containing $x$ near $x$; this induces an orientation of the circle containing $x'$ near $x'$. These orientations induce a sign $s\in\{+,-\}$ on the self-intersection of $H(Y)$ near $H(x)=H(x')$. Now we compatibly orient the singular circles near $y$ and $y'$ specifically so that the induced sign on the self-intersection of $H(Y)$ near $H(y)=H(-y)$ is $-s$. Then $l_1$ and $l_2$ are both orientable exactly when $b_1$ and $b_2$ are both orientation preserving with respect to these local orientations.
\end{remark}

Remark~\ref{rem:l1l2} may seem unwieldy, but essentially is an analogue of the fact that in 4-dimensional topology, a framed Whitney disk must run between self-intersection points of a surface that are of opposite sign. This condition is slightly more difficult to state in this dimension, since our immersed 3-manifold $H(Y)$ intersects itself in circles rather than isolated points and hence the self-intersections do not come with inherent signs. Nevertheless, given a choice of sign on one circle in $H(Y)$ and bands connecting that circle to another, we can induce a sign on the other using the bands (which is well defined exactly when the bands union is an annulus rather than a M\"{o}bius band). We then note that the bundles $l_1,l_2$ are orientable exactly when these signs are opposite (which is exactly the case in the 4-dimensional setting, as well).

\begin{figure}
    \centering
       \labellist
       \small\hair 2pt
       \pinlabel {$v_1$} at 140 200
        \pinlabel {$w_1$} at 200 200 
     \pinlabel {$v_2$} at 102 245 
     \pinlabel {$w_2$} at 185 245 
     \pinlabel {$v_3$} at 90 177
     \pinlabel {$w_3$} at 157 177 
     \pinlabel {$v_4$} at 290 200
        \pinlabel {$w_4$} at 350 200 
     \pinlabel {$v_5$} at 255 245 
     \pinlabel {$w_5$} at 338 245 
     \pinlabel {$v_3$} at 240 177
     \pinlabel {$w_3$} at 307 177 
     
      \pinlabel {$v_4$} at 60 34
        \pinlabel {$w_4$} at 110 34
     \pinlabel {$v_1$} at 35 60
     \pinlabel {$w_1$} at 93 18
     \pinlabel {$v_3$} at 27 22
     \pinlabel {$w_3$} at 74 22
     \pinlabel {$v_2$} at 173 39
        \pinlabel {$w_2$} at 205 39
     \pinlabel {$v_5$} at 70 138 
     \pinlabel {$w_5$} at 95 138 
     
         \pinlabel {$v_4$} at 304.5 46.5
        \pinlabel {$w_4$} at 351 34
     \pinlabel {$v_1$} at 276 60
     \pinlabel {$w_1$} at 334 18
     \pinlabel {$v_3$} at 268 22
     \pinlabel {$w_3$} at 355 57
     \pinlabel {$v_2$} at 415 39
        \pinlabel {$w_2$} at 447 39
     \pinlabel {$v_5$} at 307 138 
     \pinlabel {$w_5$} at 332 138 
     
	\endlabellist
    \includegraphics[width=135mm]{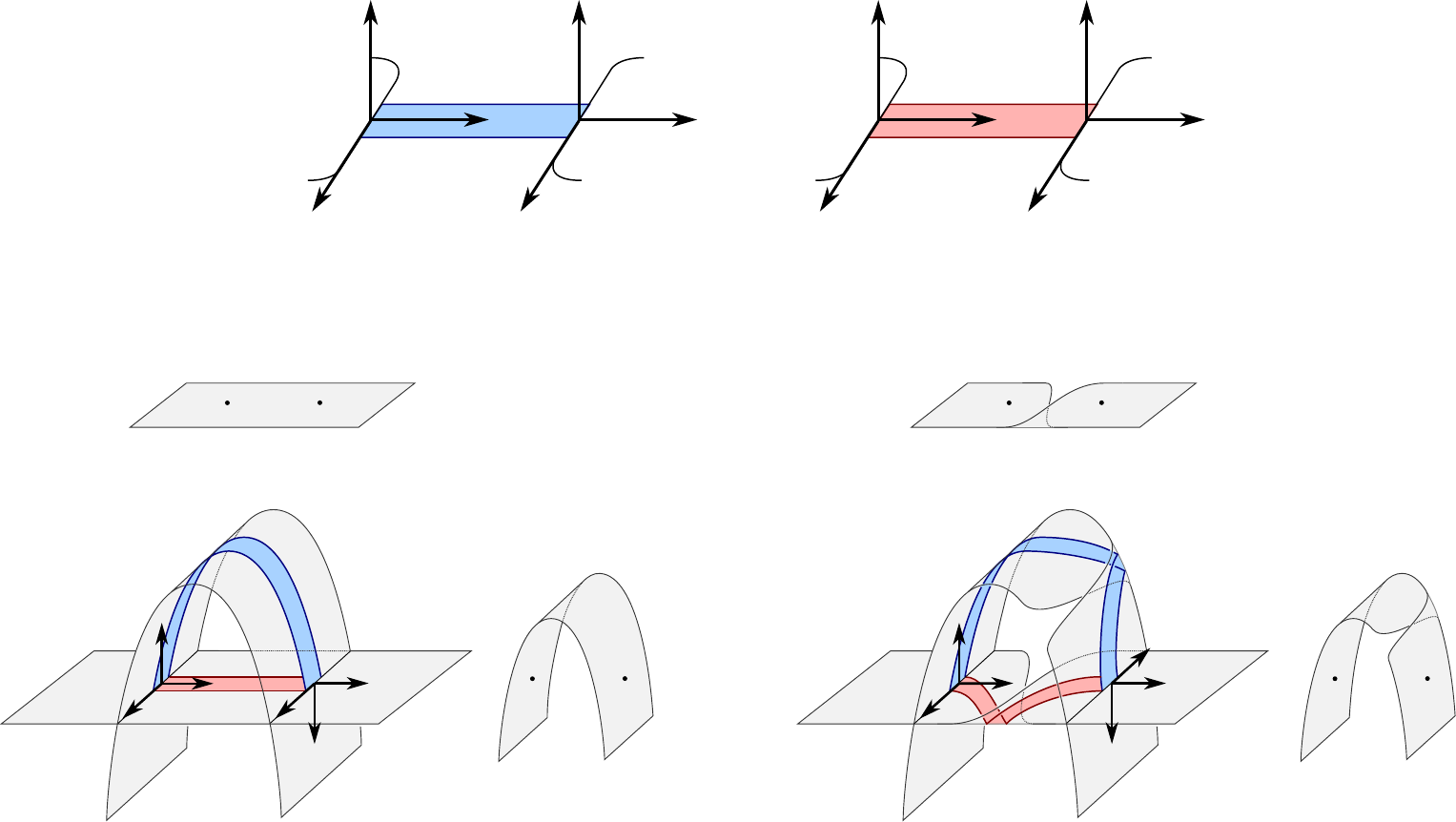}
    \caption{When $H(b_1\cup b_2)$ is an annulus, the bundles $l_1$ and $l_2$ are orientable if and only if the choice of $b_1,b_2$ induce opposite signs on the self-intersections of $H$ near the ends of $b_1,b_2$. {\bf{Top: Schematics of positive bases at the ends of the preimages of $b_1, b_2$ in $Y$.}} {\bf{Bottom left:}} when $l_1$ and $l_2$ are orientable, the induced bases of the points in the tangent space of $W$ at the ends of $H(b_1), H(b_2)$ are of opposite sign. {\bf{Bottom right:}} When $l_1$ and $l_2$ are not orientable, the induced bases at the ends of $H(b_1), H(b_2)$ are of the same sign.}\label{fig:l1l2orientablesign}
\end{figure}

\vspace*{.1in}
\subsubsection{The $\omega_2$ obstruction}\label{sec:w2obstruction}

Now we study the last condition of Definition~\ref{def:framedbands}, which is slightly different from the first two in that it is not a condition on orientability of a bundle. This is the condition that a certain trivialization of $N_D|_{\partial D}$ extends over $D$, and is analogous to the usual condition in dimension 4 of a framing on the boundary of a Whitney disk extending over the interior of the disk.

Assume that $H(b_1 \cup b_2)$ is an annulus and that $l_1$ and $l_2$ are orientable, as in Section~\ref{sec:whitneyorient}. Trivialize all three bundles to obtain a trivialization of $N_D|_{\partial D}=H(b_1\cup b_2)\oplus l_1\oplus l_2$. This trivialization gives an element of $\pi_1(\SO(3))=\mathbb{Z}/2\mathbb{Z}$ which is trivial if the trivialization extends over all of $D$ and nontrivial otherwise. 
If we alter $b_1$ (or similarly $b_2$) by adding a full twist, the effect on the induced element of $\pi_1(SO(3))$ is to add a full twist, changing whether the trivialization of $N_D|_{\partial D}$ extends over $D$. 

\begin{figure}
      \labellist
      \small\hair 2pt
       \pinlabel {half-twists in $b_1$} at 120 -5
        \pinlabel \rotatebox{90}{half-twists in $b_2$} at -5 115
               \pinlabel {$l_1$ orientable} [tl] at 275 203
               \pinlabel {$l_2$ orientable} [tl] at 275 165
               \pinlabel {$H(b_1)\cup H(b_2)$} [tl] at 275 132
               \pinlabel { \hspace{0in} an annulus} [tl] at 275 122
               \pinlabel { Trivialization} [tl] at 275 92
               \pinlabel {extends over $D$} [tl] at 275 82
	\endlabellist
    \centering
    \includegraphics[width=5.5in]{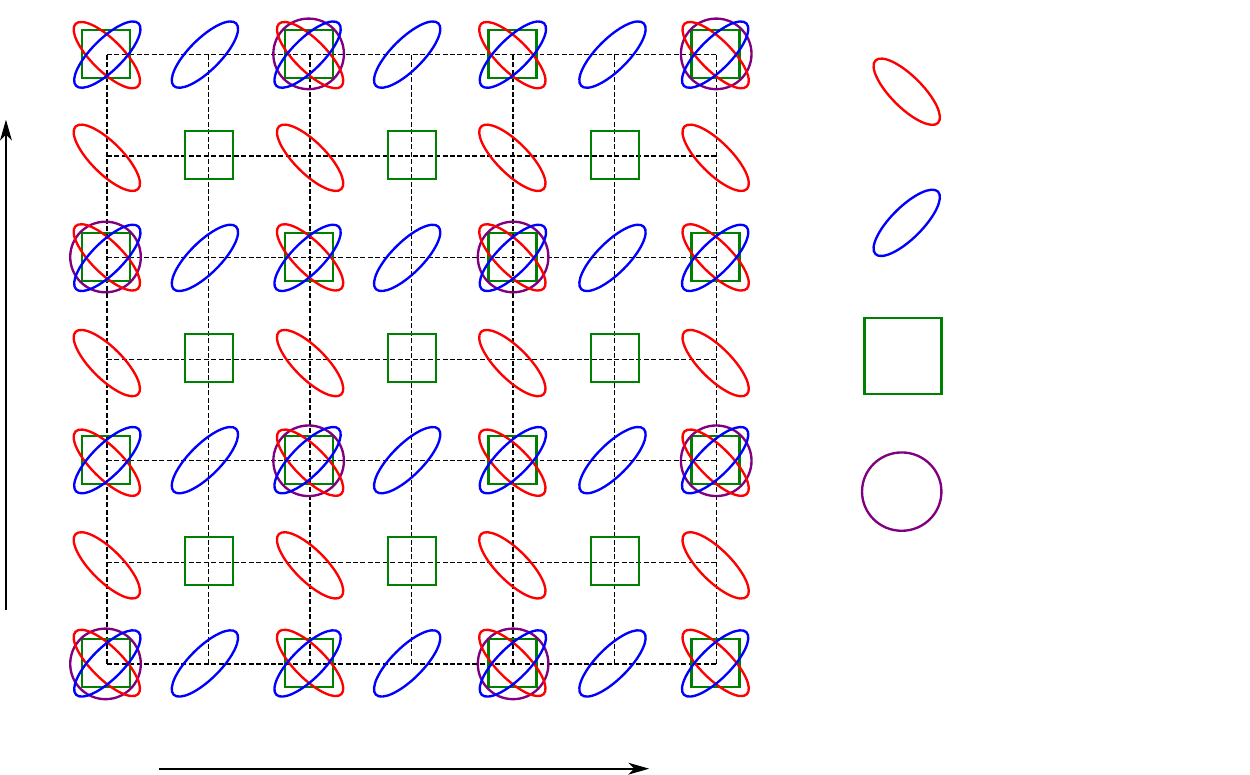}
    \vspace{.1in}
    \caption{We consider how adding half twists to $b_1$ and $b_2$ affects the orientability of the line bundles $H(b_1\cup b_2)$, $l_1$, and $l_2$. Note that we describe this as an integral lattice in $\mathbb{E}^2$ rather than $\mathbb{R}^2$, as there is no canonical zero.  Whenever $h(b_1\cup b_2)$, $l_1$, and $l_2$ are all orientable, then we may consider whether an induced trivialization ${N_D}|_{\partial D}$ extends over all of $D$. The points corresponding to trivializations that extend define a sublattice whose fundamental domain is a parallelogram with side lengths $2$ and $4$.}
    \label{fig:lattice}
\end{figure}

\begin{remark}\label{rmk:same_homology_class}
Assume we are in the case where the bands $b_1$ and $b_2$ are such that $H(b_1 \cup b_2)$, $l_1$, and $l_2$ are all orientable, but an induced trivialization of $N_D|_{\partial D}$ does not extend over the disk $D$. Then the bands $b_1,b_2$ are not framed bands for $D$. If there is a 2-sphere $S$ embedded in $W$ with $w_2(S) = 1$, 
then we may obtain a new disk $D'$ by tubing $D$ to $S$. The same trivialization of $N|_{\partial D'}$ does extend over $D'$, so we conclude that $b_1$ and $b_2$ are framed bands for $D'$. This is why we write the disk with respect to which a pair of bands are framed, rather than just saying ``$b_1$ and $b_2$ are framed," without a specification of disk.
\end{remark}

\subsubsection{Final remarks on choosing bands}\label{sec:whitneyfinalremarks}

Recall from Section~\ref{sec:finddisks} that we may only perform Whitney moves along disks meeting intersection circles corresponding to the same or inverse group element. 

\begin{remark}
Performing a Whitney move preserves the group element associated to each singular circle. That is, if a singular circle $A$ with corresponding group element $a$ is involved in a Whitney move, then the resulting singular link has one or two components that include arcs originally in $A$. These components have group element $a$ or $a^{-1}$.

To see this, choose arcs $\eta$ from each singular circle in $L$ to the basepoint of $Y$. Then the (appropriately oriented) images of $\eta$ arcs under $H$ can be concatenated to form loops determining the group elements for each self-intersection of $H(Y)$. The images of endpoints of the arcs in $\eta$ can be taken to be disjoint from the Whitney disk. Moreover, the Whitney disk meets $H(Y)$ in arcs, so generically the boundary of the Whitney disk is disjoint from all of $\eta$. Thus, the same loops determine the group elements of singular circles for $H$ after performing the Whitney move.
\end{remark}


\begin{conclusion}
When there are (not necessarily distinct) singular circles in $L$ with the same or inverse group elements $a$ and $b$, we may choose arcs $\gamma_1,\gamma_2$ between them and use these arcs to perform a Whitney move (along a disk bounded by the images of the arcs).
\begin{itemize}
    \item If the singular circles have inverse group element and are type I, then the arcs $\gamma_1,\gamma_2$ should connect the active circle in one pair to the inactive circle in the other pair. (This includes the case when we consider one type I pair $A,A'$ and let $\gamma_1$ and $\gamma_2$ both have ends at $A$ and $A'$.)
    \item If the singular circles have equal group elements and are type I, then the arcs should connect the active circle in one pair to the active circle in the other, and the inactive circle in one pair to the inactive circle in the other. (This includes the case when we consider one type I pair $A,A'$ and let $\gamma_1$ have both ends at $A$ and $\gamma_2$ have both ends at $A'$.)
    \item Once the arcs and Whitney disk are specified, the arcs cannot be arbitrarily thickened into bands. Instead, the twisting of the two bands is determined up to a criterion in $\mathbb{Z}/2\mathbb{Z} \oplus \mathbb{Z}/4\mathbb{Z}$. If $H(Y)$ is $s$-characteristic, then given two Whitney disks $D_1, D_2$ with the same boundary and with \[|D_1\cap H(Y)|\equiv |D_2\cap H(Y)|\pmod{2},\] these obstructions agree: bands $b_1,b_2$ are framed with respect to $D_1$ if and only if they are framed with respect to $D_2$. 
    \item A Whitney move along a disk $D$ bounded by $H(\gamma_1)\cup H(\gamma_2)$ changes $L$ by band surgery along bands whose cores are the framed arcs $\gamma_1,\gamma_2$, and also adds a pair of dual type I circles for each intersection of $D$ with $H(Y)$. The group elements of the singular circles resulting from the band surgery are still $a$ or $a^{-1}$. 
\end{itemize}
\end{conclusion}

\setcounter{secnumdepth}{3}

\subsection{Clean Whitney moves}\label{sec:cleanwhitney}
In Section~\ref{sec:whitney}, we saw that we cannot expect the interior of a Whitney disk to be disjoint from $H(Y)$ and understood how these intersections informed the effect on the singular link $L$ of performing the Whitney move.

Let $\gamma_1,\gamma_2$ be arcs between singular circles in $Y$ so that $H(\gamma_1)\cup H(\gamma_2)$ bounds a disk $D$. 

\begin{proposition}\label{prop:getridofwhitneydiskints}
Assume that there is a 2-sphere $G$ embedded in $W$ that intersects $H(Y)$ transversely in a single point. Then there is a Whitney disk $D'$ bounded by $H(\gamma_1)\cup H(\gamma_2)$ so that $D'$ does not intersect $H(Y)$.
\end{proposition}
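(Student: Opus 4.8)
The plan is to take the given disk $D$ with $\partial D=H(\gamma_1)\cup H(\gamma_2)$, remove the intersections of $\Int D$ with $H(Y)$ by tubing into copies of the dual sphere $G$, and then arrange the band-framing conditions of Definition~\ref{def:framedbands} so that the resulting disk is a Whitney disk.

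First I would perturb $D$ so that $\Int D$ meets $H(Y)$ transversely in points $p_1,\dots,p_n$. Since $\nu_G$ is a rank-$3$ bundle over $S^2$ its Euler class vanishes, so it admits a nowhere-zero section and $G$ can be pushed off itself; by general position (as $2+2<5$) I obtain parallel copies $G_1,\dots,G_n$ that are pairwise disjoint, disjoint from $G$, and each meeting $H(Y)$ transversely in a single point $q_i$ and nowhere else. Choosing disjoint embedded arcs $a_i\subset H(Y)$ from $p_i$ to $q_i$ (with interiors missing $\partial D$ and the whiskers) and using that $\nu_{H(Y)}$ is an orientable rank-$2$ bundle, hence trivial over each $a_i$, I would tube $D$ to $G_i$ along $a_i$: a meridian circle of $H(Y)$ near $p_i$ bounds a small disk in $D$ (as $D\pitchfork H(Y)$ and $2+3=5$), is isotopic in $\nu_{H(Y)}\setminus H(Y)$ along $a_i$ to a meridian near $q_i$ bounding a small disk in $G_i$, and the trace of this isotopy is an annular tube disjoint from $H(Y)$. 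Surgering $D\cup\bigcup_i G_i$ along these tubes and cleaning up by general position yields an embedded disk $D''$ — each operation is an ambient connected sum with a sphere and so preserves being a disk — with $\partial D''=H(\gamma_1)\cup H(\gamma_2)$ and $\Int D''$ disjoint from $H(Y)$.

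To conclude I would check that $D''$ is a Whitney disk, i.e.\ admits framed bands $b_1,b_2$ with cores $\gamma_1,\gamma_2$. Conditions (1) and (2) of Definition~\ref{def:framedbands} ($H(b_1\cup b_2)$ an annulus, $l_1,l_2$ orientable) depend only on the orientations chosen on the singular circles at the ends of $\gamma_1,\gamma_2$ and can be met exactly as in Section~\ref{sec:whitneyorient} (Remarks~\ref{rem:checkannulus} and~\ref{rem:l1l2}), using that $\gamma_1,\gamma_2$ join the correct active/inactive circles as in the Conclusion of Section~\ref{sec:whitneyfinalremarks}. If condition (3) fails, i.e.\ the induced trivialization of $N_{D''}|_{\partial D''}$ does not extend over $D''$, I would add a full twist to $b_1$: by Section~\ref{sec:w2obstruction} this toggles the obstruction in $\pi_1(\SO(3))=\mathbb{Z}/2\mathbb{Z}$ while leaving (1) and (2) intact, so afterwards $b_1,b_2$ are framed bands for $D''$. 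Then $D'=D''$ is the desired clean Whitney disk.

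The hard part will be the middle step: arranging the tubing so that the result is genuinely an embedded disk with interior disjoint from $H(Y)$. The crucial inputs are that $\nu_{H(Y)}$ is trivial over each arc $a_i$ (so the tubes can be isotoped off $H(Y)$) and that the copies $G_i$, the tubes, and $D$ can all be made mutually disjoint by dimension count in $W^5$. A minor additional point is checking that conditions (1) and (2) can be satisfied simultaneously in the degenerate case where $\gamma_1$ and $\gamma_2$ both have both endpoints on a single type I dual pair (or a single type II circle), where the orientation choices are no longer independent.
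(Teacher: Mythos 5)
Your proof is correct and is essentially the paper's argument: the paper's entire proof is to tube $D$ to $n$ parallel copies of $G$ near the $n$ intersection points with $H(Y)$. Your additional care about making the copies of $G$, the tubes, and $D$ mutually disjoint, and about arranging the band-framing conditions afterwards, fills in details the paper defers to Remark~\ref{rmk:disjoint disk} and Proposition~\ref{prop:criterionextend}, but the underlying construction is the same.
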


\begin{proof}
The disk $D$ intersects $H(Y)$ in $n$ points for some $n$. Obtain $D'$ from $D$ by tubing $D$ to $n$ copies of $G$ near these intersections.
\end{proof}

\begin{definition}
We call a Whitney disk bounding the arcs $\gamma_1$ and $\gamma_2$ that is disjoint from $H(Y)$ \emph{clean}.  
\end{definition}

\begin{remark}\label{rmk:disjoint disk}
In Proposition~\ref{prop:getridofwhitneydiskints}, if $w_2(G) = 0$,  then a trivialization of $N_D|_{\partial D}$ that extends over $D$ also extends over $D'$. If $w_2(G) = 1$, then this depends on parity of $n$: If $n$ is even then the trivialization extends over $D'$ but if $n$ is odd then the trivialization does not extend over $D'$.
\end{remark}

When $Y$ has a dual sphere, we can say exactly when a framing of $\gamma_1, \gamma_2$ extends over a clean Whitney disk, as we explain in the following proposition. 


\begin{proposition}\label{prop:criterionextend}\leavevmode
\begin{enumerate}
    \item\label{pointone} If $H(Y)$ has a dual sphere $G$ with $w_2(G) = 0$, then there is a $\mathbb{Z}/2\mathbb{Z}\oplus\mathbb{Z}/2\mathbb{Z}$ criterion to a framing of $\gamma_1$ and $\gamma_2$ extending to some clean Whitney disk.
    \item\label{pointtwo} If $H(Y)$ has a dual sphere $G$ with $w_2(G) = 1$ and if $H(Y)$ is {\emph{not}} $s$-characteristic, then there is a $\mathbb{Z}/2\mathbb{Z}\oplus\mathbb{Z}/2\mathbb{Z}$ criterion to a framing of $\gamma_1$ and $\gamma_2$ extending to some clean Whitney disk.
\item\label{pointthree} If $H(Y)$ has a dual sphere $G$ with $w_2(G) = 1$ and is $H(Y)$ is $s$-characteristic, then there is a $\mathbb{Z}/2\mathbb{Z}\oplus\mathbb{Z}/4\mathbb{Z}$ criterion to a framing of $\gamma_1$ and $\gamma_2$ extending to some clean Whitney disk (and this same criterion holds for every clean Whitney disk).
\end{enumerate}
\end{proposition}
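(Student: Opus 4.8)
The plan is to reduce all three cases to the analysis of Section~\ref{sec:whitney}, where we saw that given a fixed Whitney disk $D$ bounded by $H(\gamma_1)\cup H(\gamma_2)$, the set of framings of $\gamma_1,\gamma_2$ extending over $D$ is governed by a $\mathbb{Z}/2\mathbb{Z}\oplus\mathbb{Z}/4\mathbb{Z}$ criterion: the first $\mathbb{Z}/2\mathbb{Z}$ records whether $H(b_1\cup b_2)$ is an annulus, the second $\mathbb{Z}/2\mathbb{Z}$ records orientability of $l_1$ (equivalently $l_2$), and the $\mathbb{Z}/4\mathbb{Z}$ (or rather the $\omega_2$-refinement on the sublattice where the first two vanish, cf.\ Figure~\ref{fig:lattice}) records whether the induced trivialization of $N_D|_{\partial D}$ extends over $D$. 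The point is to see how the freedom to vary the disk $D$ among \emph{clean} disks collapses this criterion. First I would fix one clean Whitney disk $D_0$ bounded by $H(\gamma_1)\cup H(\gamma_2)$ — which exists by Proposition~\ref{prop:getridofwhitneydiskints} — and note that the annulus/orientability conditions (the two $\mathbb{Z}/2\mathbb{Z}$ factors) are intrinsic to the choice of bands and do not depend on $D$ at all (they were phrased in Remarks~\ref{rem:checkannulus} and~\ref{rem:l1l2} purely in terms of orientations of singular circles in $Y$). So in every case a $\mathbb{Z}/2\mathbb{Z}\oplus\mathbb{Z}/2\mathbb{Z}$ worth of obstruction is always present; the content is whether the remaining $\mathbb{Z}/4\mathbb{Z}$ collapses to $\mathbb{Z}/2\mathbb{Z}$ (absorbed into "always extends after adjusting $D$") or not.

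For part~\eqref{pointone}: if $w_2(G)=0$, then by Remark~\ref{rmk:disjoint disk} tubing $D_0$ to $G$ produces another clean disk $D_0'$ with the \emph{same} $\omega_2$ obstruction (since $w_2(G)=0$), so varying over clean disks obtained by tubing to copies of $G$ changes nothing — but more usefully, I want to produce a clean disk with the \emph{opposite} $\omega_2$ value. Here I would use that $G$ is a dual sphere, so $[G]\cdot[H(Y)]=1$: the normal bundle of $H(Y)$ restricted near the intersection point, together with $G$, lets one build a clean disk differing from $D_0$ by a boundary-twist — equivalently, tube $D_0$ to a disk whose boundary is a meridian of $H(Y)$ capped off using a Clifford-torus-type piece of $G$, adjusting the framing on $\gamma_1$ by a full twist at the cost of creating and then cancelling intersections with $H(Y)$ via tubes into $G$. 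Since $w_2(G)=0$ these tubes do not spoil the framing count except for the intended full-twist change. Hence the $\mathbb{Z}/4\mathbb{Z}$ obstruction is only seen modulo $2$, giving the $\mathbb{Z}/2\mathbb{Z}\oplus\mathbb{Z}/2\mathbb{Z}$ criterion. Part~\eqref{pointtwo} is identical in spirit, but now $w_2(G)=1$ plays in our favor differently: by Remark~\ref{rmk:disjoint disk}, tubing $D_0$ to a \emph{single} copy of $G$ changes the $\omega_2$ obstruction by $1$ (it flips, since one new intersection is created and removed with odd parity bookkeeping), so again we can realize both values of the last $\mathbb{Z}/4\mathbb{Z}$-class modulo $2$ by clean disks; the non-$s$-characteristic hypothesis is needed precisely to ensure — via Observation~\ref{lem:4to5} and the Conclusion at the end of Section~\ref{sec:whitney} — that the parity of $|D\cap H(Y)|$ is \emph{not} forced, i.e.\ that we genuinely have the freedom to change $n\pmod 2$ while keeping $\partial D$ fixed, so the $\omega_2$-obstruction really does only survive mod $2$.

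For part~\eqref{pointthree}: when $w_2(G)=1$ and $H(Y)$ \emph{is} $s$-characteristic, the Conclusion at the end of Section~\ref{sec:whitney} tells us that for any two disks $D_1,D_2$ with the same boundary, the framed-band obstruction depends only on $|D_1\cap H(Y)|\pmod 2$ and on $|D_2\cap H(Y)|\pmod 2$, and these parities agree whenever the disks have equal boundary — so the $\omega_2$ obstruction is a genuine invariant of the boundary arcs and cannot be altered by any choice of (clean or not) Whitney disk. This is because $s$-characteristicity forces $[D_1]-[D_2]=[\text{closed surface}]$ to have even intersection with every immersed sphere, in particular with $H(Y)$ itself and with $G$, pinning down $|D_i\cap H(Y)|\bmod 2$. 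Therefore the full $\mathbb{Z}/2\mathbb{Z}\oplus\mathbb{Z}/4\mathbb{Z}$ criterion persists, and "this same criterion holds for every clean Whitney disk" — establishing~\eqref{pointthree}.

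The main obstacle I anticipate is the explicit local construction in parts~\eqref{pointone} and~\eqref{pointtwo} of a clean Whitney disk differing from $D_0$ by exactly a boundary full-twist while keeping $\partial D$ fixed: one must carefully use the dual sphere $G$ (its normal framing data controlled by $w_2(G)$) to absorb the geometric intersections produced by the twist, and verify that the net effect on the trivialization of $N_D|_{\partial D}$ is precisely the claimed element of $\pi_1(\SO(3))$ and nothing more. This is exactly the kind of framing-bookkeeping that Stong carries out in~\cite{stong}, and I would present it with a local picture analogous to Figures~\ref{fig:uncleanwhitney} and~\ref{fig:lattice}.
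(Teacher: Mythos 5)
Your part~\eqref{pointone} is essentially the paper's argument (boundary-twist the disk to add a full twist to a band, then cancel the resulting intersection with $H(Y)$ by tubing into the $w_2=0$ dual $G$), and your part~\eqref{pointthree} reaches the right conclusion, but for the wrong stated reason: $s$-characteristicity does not say that $[D_1]-[D_2]$ has ``even intersection with every immersed sphere'' (and $H(Y)$ is not a sphere). The relevant application is to the single sphere $S=D\cup D'$ formed by two clean disks: cleanness gives $[S]\cdot[H(Y)]=0$, and $s$-characteristicity then forces $[S]\cdot[S]\equiv 0\pmod 2$, i.e.\ $S$ has trivial normal bundle, which is exactly the statement that the framing obstruction agrees for $D$ and $D'$. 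Your appeal to the Conclusion of Section~\ref{sec:whitney} here is also circular, since the $s$-characteristic bullet of that Conclusion is precisely what part~\eqref{pointthree} is meant to establish.

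The genuine gap is in part~\eqref{pointtwo}. Tubing a clean disk $D_0$ to a single copy of $G$ does \emph{not} produce a clean disk: since $G$ meets $H(Y)$ transversely once, the tube introduces one new intersection of the disk with $H(Y)$, and removing it requires a second copy of $G$. More generally, any clean disk obtained from $D_0$ using copies of $G$ alone involves an even number of them, so the $\omega_2$ obstruction never flips; your ``odd parity bookkeeping'' cannot be made to work this way. You correctly sense that the non-$s$-characteristic hypothesis must supply the freedom to change the intersection parity, but you never construct the clean disk that realizes the flipped obstruction. The missing step is: take an embedded sphere $F$ with $F\cdot H(Y)\not\equiv F\cdot F\pmod 2$ (this is what non-$s$-characteristic gives you), tube $D$ to $F$, and then tube to one copy of $G$ for each resulting intersection with $H(Y)$. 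The total number of $w_2=1$ spheres tubed on is $|F\cap H(Y)|+w_2(F)\equiv F\cdot H(Y)+F\cdot F\equiv 1\pmod 2$, so the obstruction flips on a genuinely clean disk. Without this construction, part~\eqref{pointtwo} is unproven.
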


\begin{proof}

In all of the stated cases, since $H(Y)$ has a dual sphere we know by Proposition~\ref{prop:getridofwhitneydiskints} that there is a clean Whitney disk $D$ with boundary $H(\gamma_1)\cup H(\gamma_2)$. Let $b_1,b_2$ be band thickenings of $\gamma_1,\gamma_2$ that are framed with respect to $D$. For any other choice of Whitney disk, whether $b_1,b_2$ are framed with respect to $D$ is determined by the $\omega_2$ obstruction of Section~\ref{sec:w2obstruction}.

If $\omega_2(G)=0$, then we may twist $D$ once near its boundary to add a full twist to either $b_1$ or $b_2$. This introduces an intersection of $D$ with $H(Y)$ which we can remove by tubing $D$ to $G$. This preserves whether the induced trivialization of $N_D|_{\partial D}$ extends over $D$, settling Case \eqref{pointone}.

If $\omega(G)=1$ and $H(Y)$ is not $s$-characteristic, then let $F$ be an embedded 2-sphere in $X\times I$ with $F\cdot H(Y)\not\equiv F\cdot F\pmod{2}$. Tube $D$ to $F$ to obtain a different Whitney disk $D'$, and then tube $D'$ to a copy of $G$ for each intersection of $D'$ with $H(Y)$ to obtain a clean Whitney disk $D''$. In all, we have obtained $D''$ from $D$ by tubing on an odd number of 2-spheres with $\omega_2=1$ (and possibly one $\omega_2=0$ 2-sphere). We conclude that the bands $b_1$ and $b_2$ are  {\emph{not}} framed for $D''$, thus settling Case \eqref{pointtwo}.

Finally, if $G$ is unframed and $H(Y)$ is $s$-characteristic, then let $D'$ be any other clean Whitney disk bounded by $H(\gamma_1)\cup H(\gamma_2)$. The 2-sphere $D\cup D'$ can be isotoped off $H(Y)$. Because $H(Y)$ is $s$-characteristic, then the 2-sphere $D\cup D'$ must have trivial normal bundle. Therefore, a framing of $N_D|_{\partial D}$ extends over $D$ if and only if it extends over $D'$, settling Case \eqref{pointthree}.
\end{proof}

We can use the following lemma to understand when bands $b_1,b_2$ are framed with respect to some clean Whitney disk.

\begin{lemma} \label{lem:stong1}
Assume that $H$ is $s$-characteristic in $X$ with a dual sphere $G$ and let $A$ and $A'$ be a dual pair of type I circles in $L$. Assume both $A$ and $A'$ are null-homotopic in $Y$. Then $$\lk(A, L - A) = \lk(A', L - A') \pmod{2}.$$
\end{lemma}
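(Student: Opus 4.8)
The plan is to compute $\lk(A, L-A) - \lk(A', L-A')$ modulo $2$ by realizing each linking number as an intersection number of a Seifert-type surface with $L$, and then to use $s$-characteristicness of $H(Y)$ to kill the difference. First I would use the hypothesis that $A$ and $A'$ are null-homotopic (hence null-homologous) in $Y$ to choose surfaces: let $\Sigma_A, \Sigma_{A'} \subset Y$ be embedded surfaces (immersed is fine, working mod $2$) with $\partial \Sigma_A = A$ and $\partial \Sigma_{A'} = A'$. Then $\lk(A, L-A) \equiv |\Sigma_A \cap (L-A)| \pmod 2$ and similarly for $A'$, where we perturb so these intersections are transverse; the mod-$2$ linking numbers in $Y$ (really in a ball, or using that the relevant classes are torsion/zero) are computed this way.

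Next, the key geometric step: since $A$ and $A'$ are a dual type I pair, $H(A) = H(A')$, so the sphere $\Sigma := H(\Sigma_A) \cup_{H(A)} H(\Sigma_{A'})$ is a (possibly immersed) $2$-sphere in $W = X \times I$. I would compute $[\Sigma] \cdot [H(Y)]$ in two ways. On one hand, the intersections of $\Sigma$ with $H(Y)$ away from the double-point set come from interior intersections of $\Sigma_A$ and $\Sigma_{A'}$ with the rest of $Y$ and with each other; the intersections of $H(Y)$ with itself that lie along the self-intersection circles contribute $\lk(A, L-A) + \lk(A', L-A')$ modulo $2$ (each singular circle $C \subset L-A$ meeting $\Sigma_A$ corresponds to the image sheet through $H(C)$ meeting $\Sigma$), plus correction terms (self-intersections of $\Sigma_A$, of $\Sigma_{A'}$, and $\Sigma_A \cdot \Sigma_{A'}$ in $Y$) that I would arrange to be controlled or that cancel in the symmetric combination. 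On the other hand, by $s$-characteristicness of $H(Y)$ in $W$ (Observation~\ref{lem:4to5} / the definition of $s$-characteristic), $[\Sigma]\cdot[H(Y)] \equiv [\Sigma]\cdot[\Sigma] \pmod 2$, and $[\Sigma]\cdot[\Sigma] \equiv w_2(\Sigma)$ can be computed as a sum of framing defects of $\Sigma_A$, $\Sigma_{A'}$, and their normal bundles in $Y$ versus in $W$ — and these are exactly the same correction terms appearing on the other side, so they cancel, leaving $\lk(A,L-A) \equiv \lk(A', L-A') \pmod 2$.

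The cleanest way to organize this is probably: set $\Sigma = H(\Sigma_A \cup_A \Sigma_{A'})$, observe $[\Sigma] \cdot [H(Y)] \equiv \lk(A, L-A) + \lk(A', L-A') + \epsilon \pmod 2$ where $\epsilon$ collects the ``diagonal'' contributions (self-intersections of the pieces and their mutual intersections in $Y$, plus the contribution of $A = A'$'s image meeting itself), and separately $[\Sigma]\cdot[\Sigma] \equiv \epsilon \pmod 2$ by a direct normal-bundle count — using that $Y$ is orientable so the ambient framing obstruction of a surface in $Y$ pushed into $W$ only sees the self-intersection data already counted in $\epsilon$. Equating via $s$-characteristicness gives $\lk(A,L-A) + \lk(A', L-A') \equiv 0$, which is the claim mod $2$. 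The dual sphere $G$ is used, if needed, to make $\Sigma$ embedded or to adjust its framing without changing the mod-$2$ intersection count (tubing to $G$ changes things by $w_2(G)$-controlled amounts), though mod $2$ one can likely avoid invoking it and work with immersed surfaces directly.

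The main obstacle I anticipate is bookkeeping the ``diagonal'' term $\epsilon$ honestly: making precise that the self-intersection number $[\Sigma]\cdot[\Sigma]$ in $W$ really does equal (mod $2$) the same combination of $Y$-internal intersection/framing data that shows up as the error in $[\Sigma]\cdot[H(Y)]$, so that the two genuinely cancel rather than merely looking similar. This requires carefully comparing the normal bundle of $\Sigma_A$ in $Y$ with its normal bundle in $W$ (an extra trivial line, since $Y^3 \subset W^5$ has normal bundle of rank $2$, and orientability pins down the contribution), and tracking how the $H(A) = H(A')$ gluing locus contributes to both counts. Once that identification is set up, the conclusion is immediate from the $s$-characteristic hypothesis.
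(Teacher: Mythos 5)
Your overall strategy coincides with the paper's: cap $A$ and $A'$ with surfaces in $Y$, glue their images along $H(A)=H(A')$ to form a $2$-sphere $\Sigma$ in $W=X\times I$, and play $[\Sigma]\cdot[H(Y)]$ (which records $\lk(A,L-A)+\lk(A',L-A')$ plus correction terms) against $[\Sigma]\cdot[\Sigma]$ via the $s$-characteristic hypothesis. The paper runs this with $A,A'$ bounding disjoint embedded disks $D_A,D_{A'}$ in $Y$, so that the only correction to control is the normal bundle of $\Sigma$.

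There is, however, a genuine gap at exactly the point you flag as ``the main obstacle.'' Your claimed identity $[\Sigma]\cdot[\Sigma]\equiv\epsilon$, with $\epsilon$ the diagonal intersection data of $\Sigma_A,\Sigma_{A'}$ inside $Y$, fails already in the simplest case: if $D_A$ and $D_{A'}$ are disjoint embedded disks then $\epsilon=0$, whereas $[\Sigma]\cdot[\Sigma]$ (i.e., $w_2$ of the rank-$3$ normal bundle of $\Sigma$ in the $5$-manifold $W$) is the mod-$2$ reduction of the \emph{relative framing} along the gluing circle $H(A)=H(A')$ --- the twisting of the two sheets of $H(Y)$ through $A$ and $A'$ around one another --- and this integer is not determined by any intersection data internal to $Y$. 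This is precisely where the dual sphere $G$ must be used: since $H(Y)$ is $s$-characteristic, $G$ has nontrivial normal bundle, and Remark~\ref{remark:relativezero} lets one tube $H(Y)$ to a nullhomologous $3$-sphere (the boundary of an Euler number $\pm1$ disk bundle over $G$) to change the relative framing by $\pm1$ without altering $L$. After normalizing the relative framing to zero, $\Sigma$ has trivial normal bundle and $s$-characteristicness gives $[\Sigma]\cdot[H(Y)]\equiv0$, which is the claim. Your suggestion that ``mod $2$ one can likely avoid invoking'' $G$ is therefore exactly backwards: without $G$ the framing term is uncontrolled, and the same computation could just as well output $\lk(A,L-A)+\lk(A',L-A')\equiv1$. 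To repair the argument, replace the $\epsilon$-cancellation claim with the relative-framing normalization of Remark~\ref{remark:relativezero}, and (as the paper does) work with embedded disjoint disks rather than general immersed Seifert surfaces, which also avoids having to apply the embedded-sphere definition of $s$-characteristic to an immersed $\Sigma$.
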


We delay the proof of Lemma~\ref{lem:stong1} to the end of Section~\ref{sec:ambientdehn}. We make note of it here to emphasize that this condition makes it quite easy to check whether bands $b_1, b_2$ are framed for some clean Whitney disk when $H$ is $s$-characteristic with a dual. First of all, check that $b_1,b_2$ respect the orientation conditions of Section~\ref{sec:whitneyorient} (this is particularly easy in the case that each of $b_1, b_2$ meet only one circle, in which case we must only check that both bands are orientation-preserving). Then, check whether performing band surgery along $b_1, b_2$ would yield a singular link that satisfies Lemma~\ref{lem:stong1}. If not, add a full twist to one of $b_1$ or $b_2$ -- this preserves the orientation properties, but now surgering along $b_1$ and $b_2$ yields a singular link that does satisfy Lemma~\ref{lem:stong1}. The bands $b_1, b_2$ are now framed for some clean Whitney disk by Proposition~\ref{prop:criterionextend}.


\subsection{Gluing Whitney disks}\label{sec:gluingwhitneydisks}

\begin{figure}
     \centering
       \labellist
       \small\hair 2pt
       \pinlabel {$A$} at 0 300
        \pinlabel {$B$} at 35 300
     \pinlabel {$C$} at 70 300
     \pinlabel {$A'$} at 100 300
     \pinlabel {$B'$} at 135 300
     \pinlabel {$C'$} at 170 300
     \pinlabel {$b_1$} at 19 250
        \pinlabel {$b_1'$} at 119 250
     \pinlabel {$b_2$} at 50 250
     \pinlabel {$b_2'$} at 150 250
       \pinlabel {$A$} at 250 300
        \pinlabel {$B$} at 285 300
     \pinlabel {$C$} at 320 300
     \pinlabel {$A'$} at 350 300
     \pinlabel {$B'$} at 385 300
     \pinlabel {$C'$} at 420 300
     \pinlabel {$b_3$} at 269 250
     \pinlabel {$b_3'$} at 369 250
	\endlabellist
	
    \includegraphics[width=120mm]{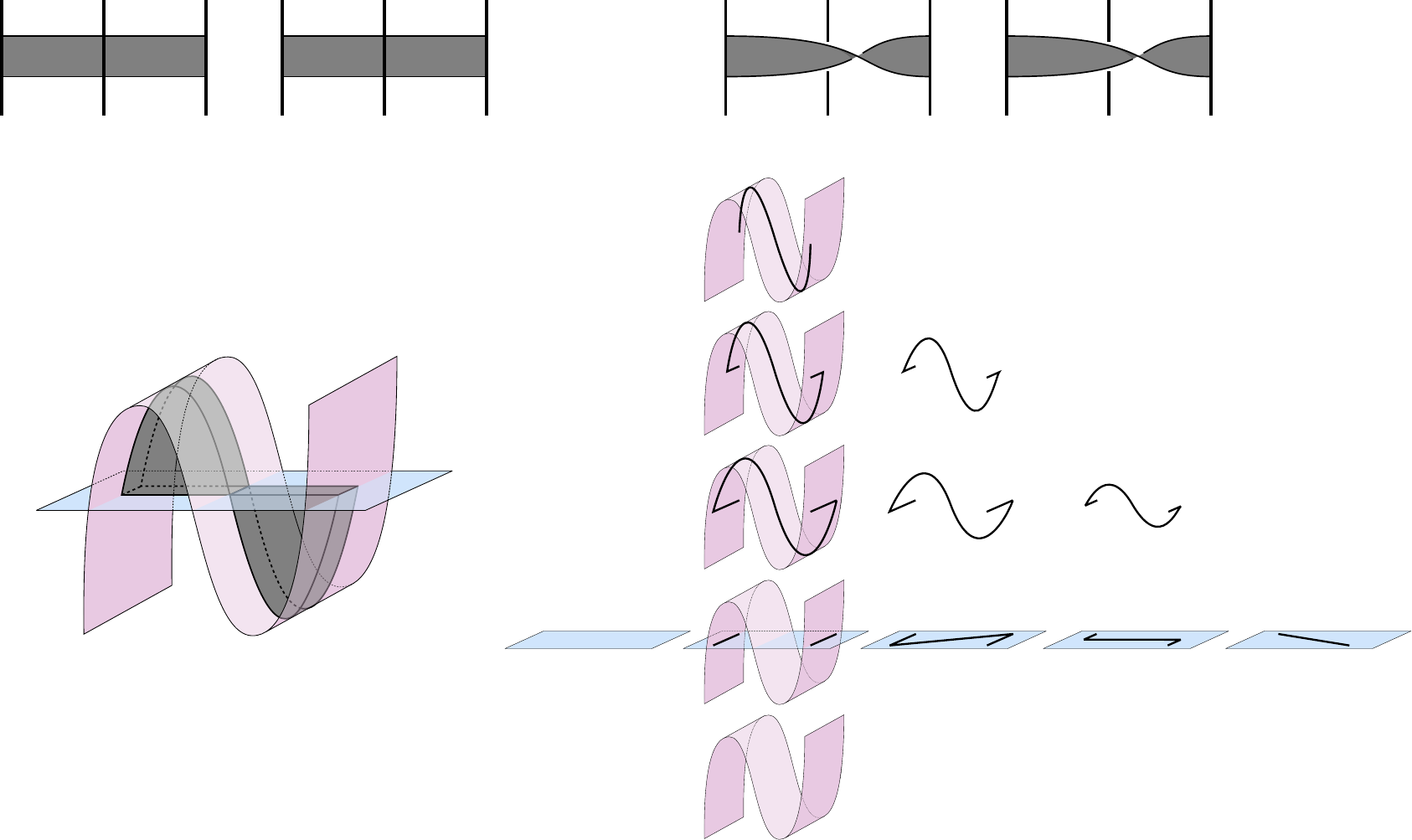}
    \caption{Left: Whitney disks $D_1$ and $D_2$ meeting at $H(B)$. 
    Right: We push $b_1\cup b_2$ and $b'_1\cup b'_2$ off of $B$ and $B'$ (respectively) and add a half twist to form $b_3$ and $b'_3$. The sign of the half-twist in $b'_3$ is determined by the sign of the half-twist in $b_3$ (and the directions we pushed off $B$ and $B'$), given that we want to find a disk $D_3$ agreeing with $D_1$ and $D_2$ away from $B$ that yields the bands $b_3$ and $b'_3$.} 
    \label{fig:gluewhitneydisks}
\end{figure}

In this subsection, we describe a procedure for gluing two Whitney disks which share a common corner to obtain a new Whitney disk. 

Let $A,B,C\subset L$ be (not necessarily distinct) singular circles in $Y$. If $A$ is type I then set $A'$ to be is its dual; if $A$ is type II then set $A'$ to be $A$. Similarly choose singular circles $B'$ and $C'$.

Let $D_1$ be a Whitney disk meeting $H(A)$ and $H(B)$ and let $D_2$ be a Whitney disk meeting $H(B)$ and $H(C)$. Assume that $D_1$ and $D_2$ meet $H(B)$ in the same point, but that otherwise their boundaries are disjoint (see left of Figure~\ref{fig:gluewhitneydisks}). Then we may glue $D_1$ and $D_2$ together to obtain a new Whitney disk $D_3$ meeting $H(A)$ and $H(C)$ as follows.

Let $\eta_1$ and $\eta'_1$ be the cores of the bands $b_1$ and $b'_1$ in $Y$ whose image bounds the framed disk $D_1$. Let $\eta_2$ and $\eta'_2$ be the cores of the bands $b_2$ and $b'_2$ in $Y$ whose image bounds the framed disk $D_2$. 
Isotope $D_2$ near $H(B)$ so $H^{-1}(\partial D_1\cup\partial D_2)$ form two arcs $\eta_3=\eta_1\cup\eta_2$ and $\eta_3'=\eta'_1\cup\eta'_2$. In Figure~\ref{fig:gluewhitneydisks} (left) we give a prototypical schematic.

Note that if $A,B,C$ are oriented so that $b_1$ and $b_2$ are both orientation-preserving, then $b_3=b_1\cup b_2$ viewed as a band connecting $A$ and $C$ is {\emph{not}} orientation-preserving. We push $b_3$ off $B$ and add a half-twist so that $b_3$ is orientation-preserving. See the right of~\ref{fig:gluewhitneydisks}. 


Now push $b'_3= b'_1\cup b'_2$ slightly off $B$ as well. 
If we add a half-twist to $b'_3$, then $H(b_3\cup b'_3)$ will be an annulus and both $b_3$ and $b'_3$ will be orientation-preserving. Let $D_3$ be a (possibly unframed) Whitney disk for $b_3, b_3'$ obtained by plumbing together $D_1, D_2$ as in the right of Figure~\ref{fig:gluewhitneydisks}.

By the analysis of Section~\ref{sec:whitney}, $b_3$ and $b'_3$ are framed for $D_3$ up to adding a whole twist to $b'_3$. That is, for one choice of the sign of half-twist added to $b'_3$, the bands $b_3$ and $b'_3$ will be framed. (We can state this sign explicitly in terms of certain bases of points in the tangent space of $H(Y)$ and $N_{H(Y)}$, but we do not think this would be very instructive.) 

\subsection{Introducing a type II singular circle}\label{sec:maketrivialtypeII}

\begin{figure}
{\centering
    \labellist
    \small\hair 2pt
     \pinlabel {$D$} at 20 234 
     \pinlabel {$D'$} at 185 234
    \pinlabel {\textcolor{violet}{$W$}} at 202 180
		\pinlabel {$B^4\times 0$} at 20 -10
		\pinlabel {$B^4\times \frac{1}{6}$} at 75 -10
		\pinlabel {$B^4\times \frac{1}{3}$} at 130 -10
		\pinlabel {$B^4\times \frac{1}{2}$} at 185 -10
	\endlabellist
	\vspace{.2in}
	\includegraphics[width=80mm]{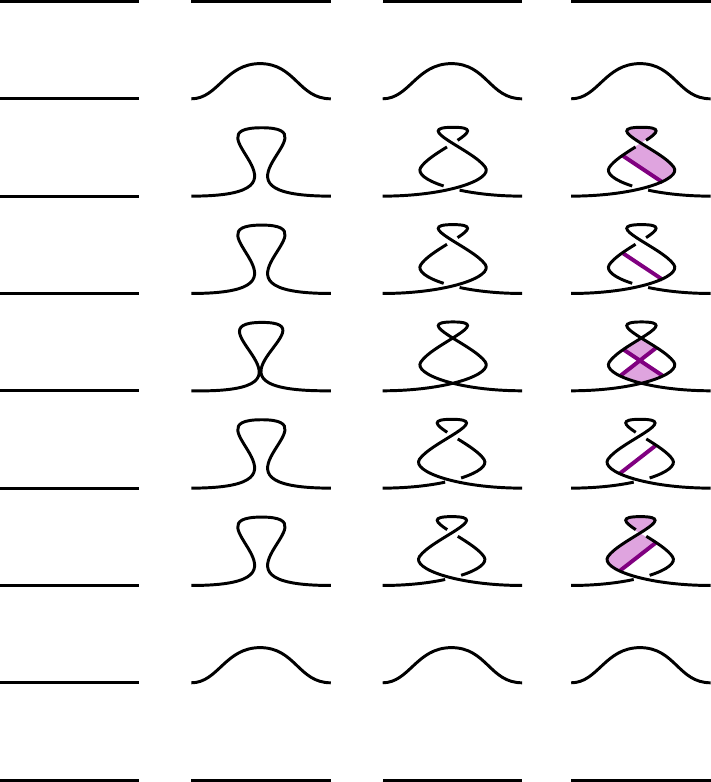}
	\vspace{.3in}
	\caption{An illustration of $B\cong D^2\times I$ immersed in $B^4\times I$; here we picture only $B^4\times[0,1/2]$. 
	The intersection of $B$ with $B^4\times 0$ is an unknotted disk $D$. In $B^4\times t$ from $t=0$ to $t=1/2$, $B$ consists of a trace of a trivial finger move from $D$, so that $B\cap(B^4\times 1/2)$ is a disk $D'$ with two points of self-intersection in its interior. In $B^4\times 1/2$, we draw a Whitney disk $W$ for $D'$. In $B^4\times t$ from $t=1/2$ to $t=1$ (pictured in Figure~\ref{fig:gettypeiiboundary}), $B$ consists of a trace of a Whitney move from $D'$ along $W$. In Figure~\ref{fig:gettypeiiboundary}, we see that $B\cap(B^4\times 1)$ is again an unknotted disk (and along the way demonstrate that $W$ is framed).}
	\label{fig:gettypeiicircle}
}
\end{figure}

In the definition of the Freedman--Quinn invariant, one does not count the singular circles of an immersed $Y\into X\times I$ that correspond to the trivial group element. In Figure~\ref{fig:gettypeiicircle}, we produce $B\cong D^2\times I$ immersed in $B^4\times I$ whose singular link consists of one type II singular circle. Moreover, the boundary of $B$ is $(D\times 0) \cup (\boundary D\times[0,1])\cup (D\times 1)$, for $D$ the unknotted disk in $B^4$ (see Figure~\ref{fig:gettypeiiboundary}). Using $B$, we may locally modify $H$: consider an immersion $H':Y\to X\times I$ whose image is obtained from that of $H$ by deleting a small standard $D^2\times I \subset B^4\times I$ and replacing it with a copy of $B$. Then $H'$ has a singular link obtained from that of $H$ with an additional type II circle corresponding to the trivial group element. 

\begin{figure}
{\centering
    \labellist
    \small\hair 2pt
    \pinlabel {$D'$} at 0 330 
    \pinlabel {\textcolor{violet}{$W$}} at 80 280
		\pinlabel {(a)} at 47 185
			\pinlabel {(b)} at 164 185
		\pinlabel {(c)} at 280 185
		\pinlabel {(d)} at 392 185
			\pinlabel {(e)} at 47 -5
			\pinlabel {(f)} at 164 -5
		\pinlabel {(g)} at 280 -5
		\pinlabel {(h)} at 396 -5
		\pinlabel \rotatebox{90}{$B\cap(B^4\times \frac{1}{2})$} at 5 255
    \pinlabel \rotatebox{90}{$B\cap(B^4\times 1)$} at 360 70 
	\endlabellist
	\includegraphics[width=90mm]{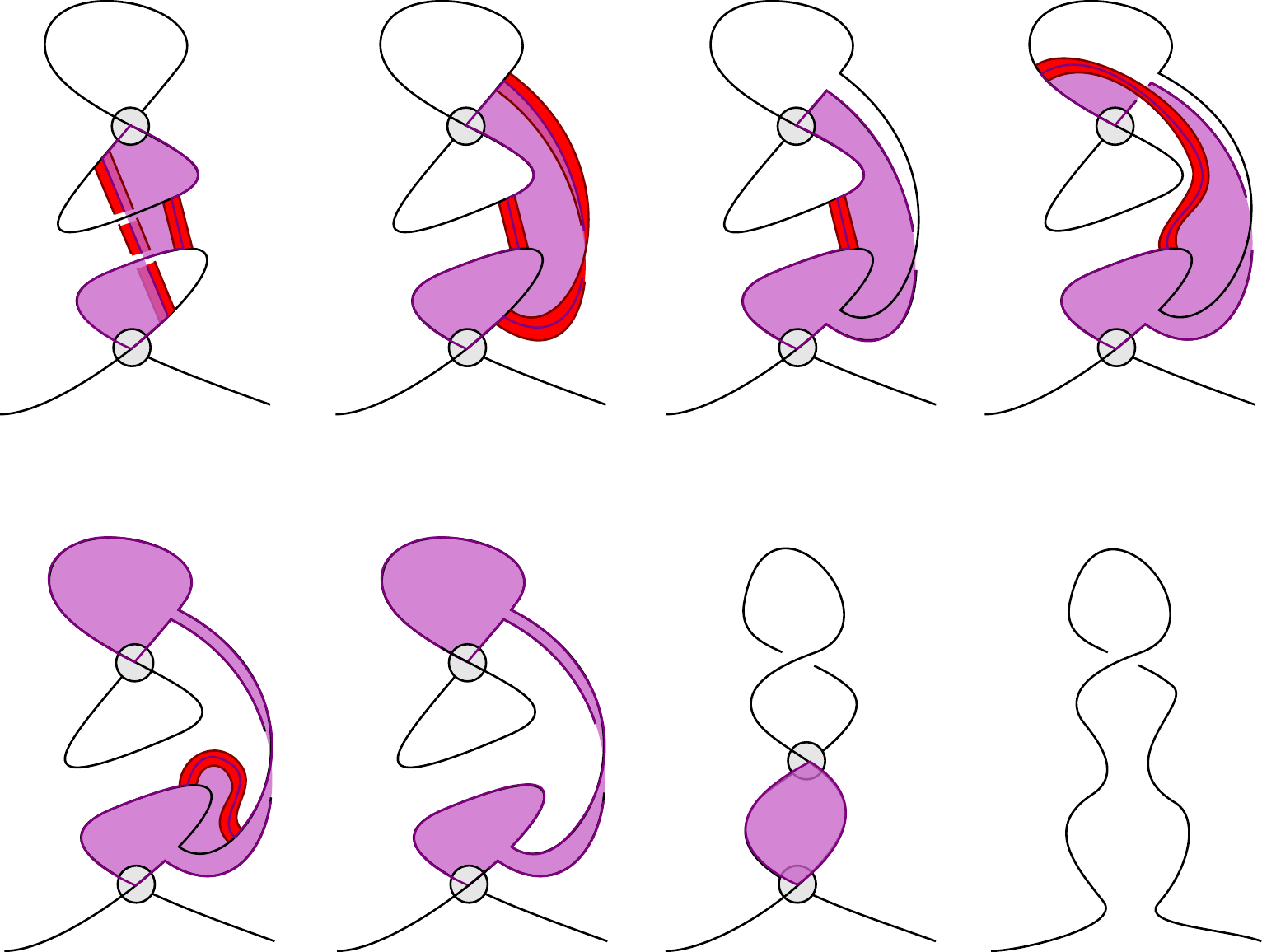}
	\vspace{.1in}
	\caption{In (a), we draw the Whitney disk $W$ with boundary on disk $D'\subset (B^4\times \frac{1}{2})$ as in Figure~\ref{fig:gettypeiicircle}. We draw $D'$ as an immersed tangle with attached bands indicating index-1 critical point. The two self-intersections of $D'$ are pushed into the tangle, so that it is singular. We indicate these with grey/shaded disks. See~\cite{hkmimmersed} for more on these diagrams of self-transverse immersed surfaces in 4-manifolds and the moves on these diagrams corresponding to isotopy or homotopy. From (a) to (g), we isotope $D'$ and $W$ to make $W$ look standard. Then from (g) to (h), we perform the Whitney move along $W$ and obtain an unknotted disk, as claimed in Figure~\ref{fig:gettypeiicircle}.}
	\label{fig:gettypeiiboundary}
}

\end{figure}

For the convenience of the reader, the isotopy that simiplifies the Whitney disk $W'$ with boundary on immersed disk $D'$ in Figure~\ref{fig:gettypeiiboundary} is broken into several steps:

\begin{itemize}
\item{(a) to (b):} We slide one index-1 point of $D'$ above a self-intersection (intersection/band pass of~\cite{hkmimmersed}).
\item{(b) to (c):} We cancel an index-0, index-1 pair of $D'$.
\item{(c) to (d):} We slide another index-1 point of $D'$ above a self-intersection.
\item{(d) to (e):} We further move the index-1 point.
\item{(e) to (f):} We cancel an index-1, index-2 pair of $D'$.
\item{(f) to (g):} We isotope the diagram to turn $W$ into the standard diagram of a Whitney disk.
 \end{itemize}

\subsection{Clasp self-intersections and finger moves}\label{sec:claspmove}

In this subsection, we reprove the following classical 3-dimensional lemma which will be applied to our singular link $L$.

\begin{lemma} \label{lem:clasp}
Let $J$ be a link of null-homotopic components in a 3-manifold $Y^3$.  Then $J$ bounds a set of immersed disks in $Y$ with only clasp intersections amongst the disks as in Figure~\ref{fig:clasp}.
\end{lemma}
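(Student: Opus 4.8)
The plan is to take each component $J_i$ of the link $J$, use its null-homotopy to produce an immersed disk, and then clean up the intersections and self-intersections that arise so that only clasps remain. First I would pick, for each component $J_i$, a generic map $f_i\colon D^2\to Y$ with $f_i|_{\partial D^2}$ parametrizing $J_i$; such a map exists precisely because $J_i$ is null-homotopic. Putting these maps in general position with respect to one another and with respect to themselves, the images $f_i(D^2)$ are immersed surfaces meeting each other and themselves in isolated transverse double points (since $3 = 2+2-1$, double points of surfaces in a $3$-manifold are generic and isolated). So at this stage I have a collection of immersed disks bounded by $J$, but their singularities are isolated double points, not clasps.

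The next step is to convert each isolated transverse double point into a clasp. A transverse double point of an immersed surface in a $3$-manifold looks locally like two sheets crossing along... no — in dimension three two $2$-disks meet in an arc generically, and a true isolated intersection point is a tangential singularity one cannot generically produce; the honest generic picture is that two disks meet along a properly embedded arc (an intersection arc) or a circle, and a clasp is precisely the local picture where this intersection arc has both endpoints on the \emph{interior} of the disks in the standard ``claspy'' configuration of Figure~\ref{fig:clasp}. So more carefully: after general position the intersection locus of the disks is a $1$-manifold, consisting of arcs and circles in the interiors of the disks. I would first remove all intersection circles by the usual innermost-disk trick — an innermost intersection circle on one disk bounds a subdisk whose interior misses the other sheet, and we can surger (push off) along it to reduce the number of circles, possibly at the cost of introducing new intersection arcs but strictly decreasing a suitable complexity; alternatively, since the circle is a component of the preimage, finger-move the disk across the $3$-ball it bounds. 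Then for each remaining intersection arc $\alpha$ with both endpoints in the interior, I can isotope a collar of the two sheets along $\alpha$ so that the local picture is exactly the standard clasp. This is where the ``finger move'' language of the section title comes in: a clasp intersection is the model obtained by pushing a finger of one disk through another, and every interior intersection arc can be normalized to this model. I should also handle arcs that run out to the boundary $J$ of the disks, but since $J$ is embedded (a link) and the $f_i$ restrict to embeddings on $\partial D^2$, such boundary-touching arcs can be pushed entirely into the interior.

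The main obstacle — the step I expect to require the most care — is the bookkeeping that turning one singularity into a clasp, or removing an intersection circle by surgery, does not create worse singularities or fail to terminate. The right way to organize this is to fix a lexicographic complexity (number of intersection/self-intersection circles, then number of non-clasp arcs) and check that each move strictly decreases it; innermost-circle surgery kills a circle, and the arc-normalization is a local isotopy supported near $\alpha$ that does not change the rest of the intersection pattern, so it is really just relabeling rather than a reduction. One subtlety worth flagging: self-intersections of a single disk $f_i$ are handled identically to intersections between two disks, since locally the two sheets are indistinguishable; the only place the single-disk case differs is that a clasp self-intersection of an immersed disk is the familiar ``clasp'' from knot theory (Figure~\ref{fig:clasp}), and the argument produces exactly that. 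Once every intersection and self-intersection arc is in clasp position and all circles are gone, the disks bound $J$ with only clasp intersections, which is the claim.
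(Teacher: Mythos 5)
There is a genuine gap here, in fact several. First, your claim that after general position ``the intersection locus of the disks is a $1$-manifold, consisting of arcs and circles'' is false: generic maps of disks into a $3$-manifold also have isolated \emph{branch points} (where a double-point arc terminates in the interior) and isolated \emph{triple points} (where three sheets meet and the double-point set is a $4$-valent graph, not a $1$-manifold). The paper's proof spends two of its four steps cancelling branch points in pairs and eliminating triple points by isotoping $J$; your argument never sees them. Second, and more seriously, your taxonomy of arcs is inverted. A clasp arc is by definition one that \emph{does} run out to $\partial D = J$: each of its two preimages has one endpoint on the boundary of its disk and one in the interior. So your plan to ``push boundary-touching arcs entirely into the interior'' would destroy exactly the configurations you are trying to produce, and an arc with both preimages avoiding the disk boundaries cannot occur generically except via branch points. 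The genuinely problematic arcs are the \emph{ribbon} intersections, where one preimage is properly embedded and the other lies in the interior; these cannot be ``normalized to the clasp model by a local isotopy supported near $\alpha$'' as you assert, because a ribbon arc and a clasp arc are distinguished by how their preimages meet $\partial D^2$, which no isotopy of a collar of $\alpha$ can change. The paper instead performs a finger move that trades each ribbon arc for \emph{two} clasp arcs, which is a global change to the intersection pattern, not a relabeling; your complexity argument (``just relabeling rather than a reduction'') therefore does not close.

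Your circle-removal step is also shakier than the paper's: compressing an immersed disk along an innermost intersection circle changes the topology of the disk and the innermost subdisk of an \emph{immersed} disk need not miss the other sheets. The paper's move is a finger move along an arc in the disk from an outermost intersection circle to $J$, which breaks the circle into an arc without introducing branch points or new circles, and one works from outermost inward. In summary, the skeleton (general position, kill circles, fix arcs) matches the paper, but the proposal as written omits branch points and triple points entirely and misidentifies which arcs are already clasps versus which require a finger move, so it does not constitute a proof.
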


\begin{figure}
    \centering
    \includegraphics[width=30mm]{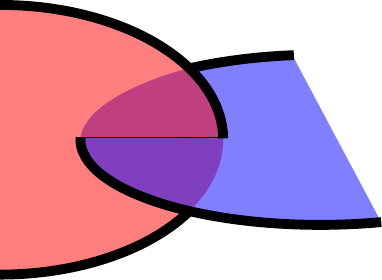}
    \caption{A clasp intersection consists on an arc, neither of whose preimages are properly embedded. (That is, each local sheet has boundary at one end of the arc.) Note that a clasp intersection may occur between two distinct disks or as a self-intersection of one disk.}
    \label{fig:clasp}
\end{figure}
\begin{proof}

Since every component of $J$ is null-homotopic, $J$ bounds an immersed collection of disks $D$. The proof proceeds by modifying these disks in the following steps.

\begin{enumerate}
    \item Remove the branch point self-intersections of the disks in $D$.
    \item Remove the circles of double points.
    \item Remove the triple points.
    \item Remove the ribbon singularities.  
\end{enumerate}

To achieve step (1), we preemptively add branch points to each disk in $D$ so that each has an even number of branch points (Figure~\ref{fig:nocusps}, top). In Figure~\ref{fig:nocusps} (middle), we illustrate how to move a branch point within a disk without introducing new branch points. We can thus move a pair of branch points (of opposite signs) in $D$ to look like Figure~\ref{fig:nocusps} (bottom left), and then remove the pair entirely. Repeat until there are no branch point self-intersections in $D$.

\begin{figure}
    \centering
    \includegraphics[width=80mm]{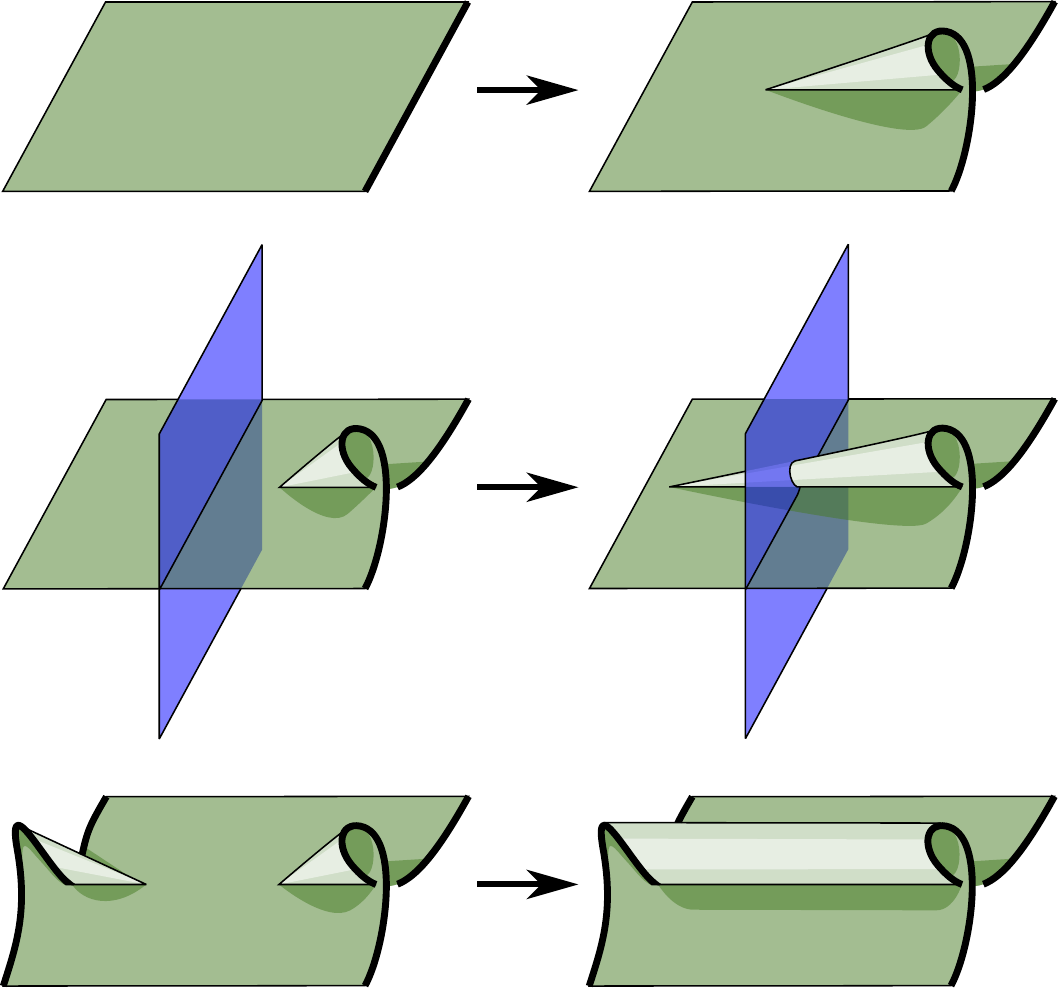}
    \caption{Step (1) of the proof of Lemma~\ref{lem:clasp} requires us to manipulate branch points in the immersed disks until eventually cancelling them all in pairs.}
    \label{fig:nocusps}
\end{figure}

For step (2), we use the finger move in Figure~\ref{fig:breakcircles}. We let $C$ be a circle of intersection in $D$ which ich outermost,
i.e., there is an arc $\eta$ from $C$ to $J$ in $D$ that does not meet any self-intersections of $D$ in its interior. Then we may use $\eta$ as the guiding arc for a finger move on $D$ 
as in Figure~\ref{fig:breakcircles}; the effect is to break the circle $C$ and replace it with an arc. 
We introduce no new branch points or closed circles of self-intersection to $D$. By performing this move to all of the double point circles starting with the outermost and working inward, we eventually eliminate all closed self-intersection circles of $D$.

\begin{figure}
    \centering
    \labellist
    \small\hair 2pt
    \pinlabel{\textcolor{blue}{$C$}} at 70 110
     \pinlabel{\textcolor{darkgreen}{$\eta$}} at 90 80
    \endlabellist
    \includegraphics[width=80mm]{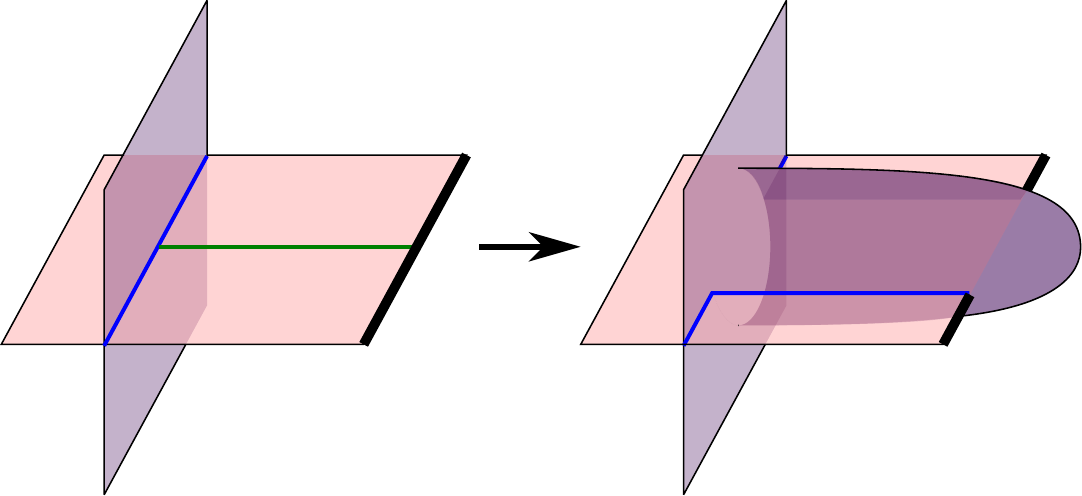}
    \caption{We can break an outermost closed self-intersection circle by performing a finger move.}
    \label{fig:breakcircles}
\end{figure}

For step (3), we use isotopy of $J=\partial D$ as in in Figure~\ref{fig:notriples}. (
If desired, one could change perspective and fix $\partial D$ and instead move the interior of $D$ via finger moves.) This move eliminates the triple point of $D$ without introducing any branch points nor closed circles of intersection. Repeat for each triple point.


\begin{figure}
    \centering
    \includegraphics[width=80mm]{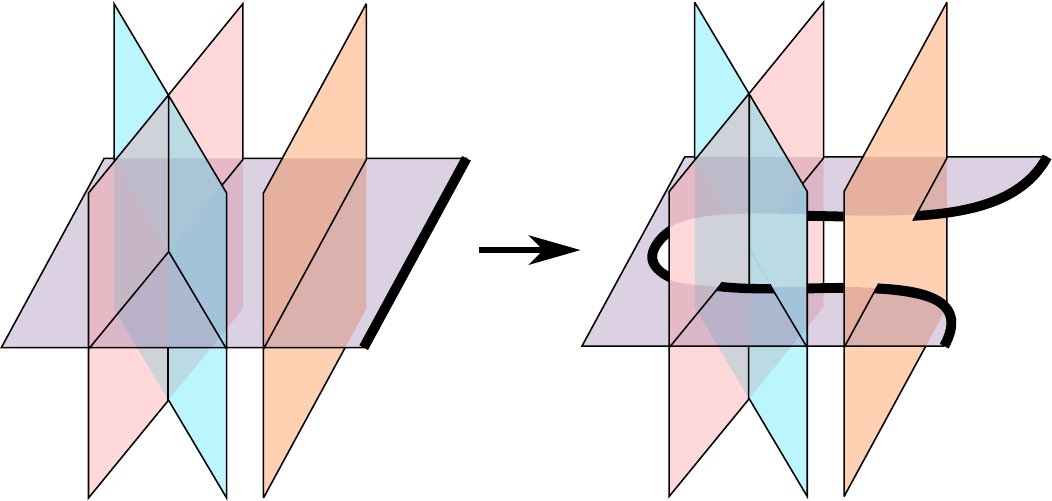}
    \caption{Given a triple point in $D$, we may isotope $J=\partial D$ to eliminate that triple point. This will not introduce any branch points, closed circles of self-intersection, or triple points to $D$.}
    \label{fig:notriples}
\end{figure}

For step (4), the finger move in Figure~\ref{fig:noribbon} is applied to each ribbon intersection. By choosing guiding arcs in $D$ whose interiors are disjoint from the self-intersections of $D$, no new triple points are created.  No branch points or double point circles are created either. Now the disks $D$ have only clasp intersections.
\begin{figure}
    \centering
    \includegraphics[width=80mm]{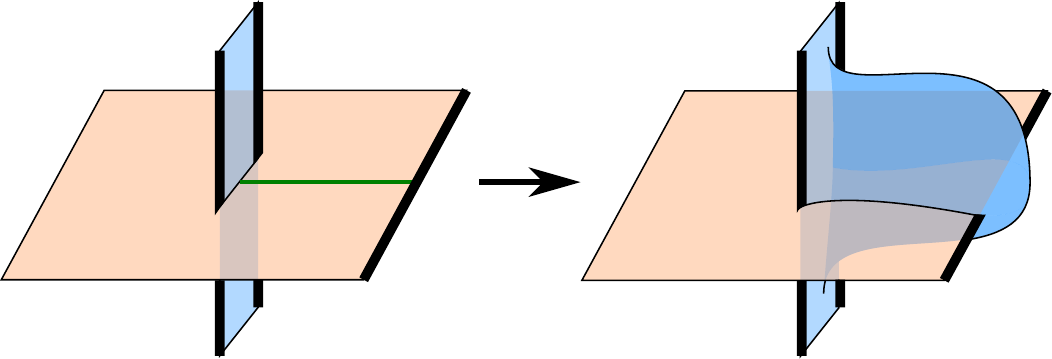}
    \caption{Given a ribbon intersection in $D$, we may perform a finger move to eliminate the ribbon intersection and add two clasp intersections.}
    \label{fig:noribbon}
\end{figure}
\end{proof}

Similar arguments to Lemma~\ref{lem:clasp} appear in~\cite{fq} and~\cite{mellor_melvin}. Lemma~\ref{lem:clasp} motivates the following move, in which we achieve crossing changes of singular circles (at the cost of introducing new singular circles).

Let $\gamma \subset Y$ be a framed arc between two singular circles $A$ and $B$ (possibly with $A=B$) such that the end points of $\gamma$ are not paired by $H$. We will homotope $H$ in a small 3-ball containing $\gamma$.  The effect on $L$ is to achieve a crossing change of $A$ and $B$ (along $\gamma$) and to add a dual pair of type I circles linking $A$ and $B$ as meridians.

\begin{figure}
    \centering
    \labellist
    \small\hair 2pt
    \pinlabel {$A'$} at 110 510
    \pinlabel {$A$} at 160 510
    \pinlabel {$B$} at 215 510
    \pinlabel {$B'$} at 270 510
    \pinlabel {$A'$} at 595 510
    \pinlabel {$A$} at 645 510
    \pinlabel {$B$} at 700 510
    \pinlabel {$B'$} at 755 510
    \pinlabel {$E$} at 555 595
    \pinlabel {$E'$} at 797 595
    \endlabellist
    \includegraphics[width=120mm]{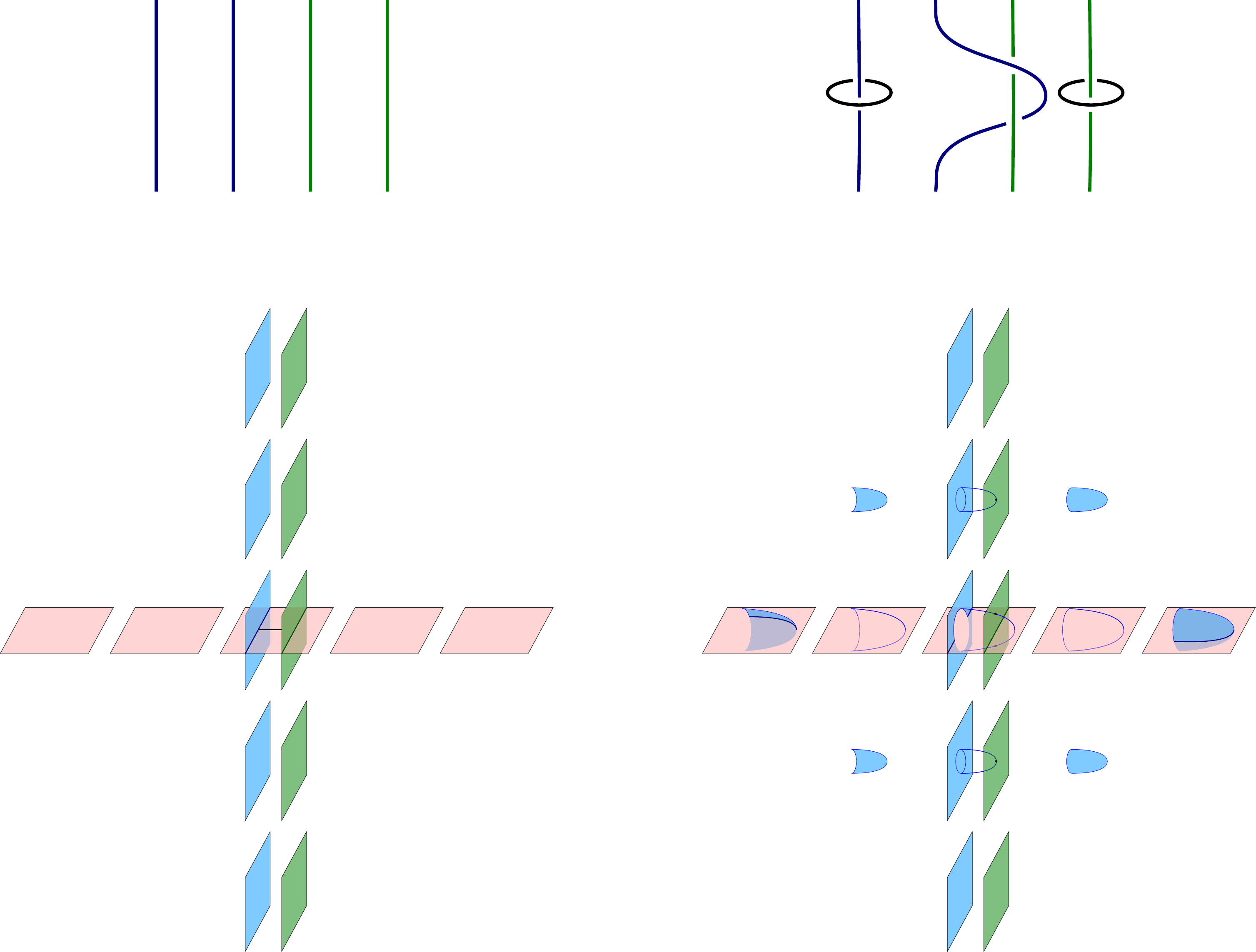}
    \caption{Top left: the singular link bounds immersed disks including a clasp between curves $A$ and $B$. (Note that $B$ might be equal to $A$ or $A'$. Top right: we perform a finger move to the image of $H$ to remove the clasp at the cost of introducing a pair of singular circles $E, E'$ that are meridians of $A'$ and $B'$. Bottom: the finger move in the image of $H$ the achieves a crossing change of the $A$ and $B$ curves.}
    \label{fig:claspmove}
\end{figure}

The move is shown in Figure~\ref{fig:claspmove}; here $\gamma$ is implicitly a horizontal arc between $A$ and $B$.  
We say that the finger move is a \emph{finger move along $\gamma$ for $A$ to $B$}. If $A$ and $B$ correspond to group elements $a,b$ respectively, then the introduced meridians $E$ and $E'$ have associated group element $e=ab^{-1}$. 

\begin{remark}\label{rem:fingermove}
It is worth noting the following simpler version of the {\emph{finger move}}. Let $\gamma$ be an arc with boundary on $H(Y)$ (away from $H(L)$ and with interior disjoint from $H$. Choose whiskers in $H(Y)$ (also away from $H(L)$) from each endpoint of $\gamma$ to the basepoint; orient these whiskers to form a based loop with $\gamma$ representing group element $g$. Then we may homotope $H$ by performing a finger move in which we push $\gamma(0)$ along $\gamma$ until introducing a new circle of self-intersection to the image of $H$ near $\gamma(1)$. The effect to $L$ is to add a dual pair of type I circles that are unknots unlinked with each other and all other components of $L$ and associated to the group element $g$.
\end{remark}

\subsection{Ambient Dehn surgery to remove dual singular circles}\label{sec:ambientdehn}



\label{sec:ambient_dehn}

In this subsection, we introduce a move that involves performing ambient Dehn surgery on $H(Y)$ in order to remove a dual pair of type I singular circles from $L$.  Here, we will assume that $H(Y)$ has a dual sphere $G$. 


Suppose that $A$ and $A'$ are a dual pair of type I circles in $L$ with the property that both are unknots and $A'$ is unlinked from $L-A'$, i.e., $A'$ bounds an embedded disk $D_1$ in $Y$ that is disjoint from the other singular circles. 


Let $D_1:=H(D_1)$ and thicken this disk in the two normal directions to $Y$ (normal to the sheet containing $A'$). The result is a 4-dimensional 2-handle that may be viewed as attached to $H(Y)$ along $A$. We surger $H(Y)$ along this 2-handle to obtain an immersion $H':M\to W$ of a 3-manifold $M$. The manifold $M$ is obtained from $Y$ by integral Dehn surgery along $A$. Note that we cannot control the framing of this Dehn surgery, which we refer to as the {\emph{relative framing on $A$}}. The self-intersection link of $H'$ agrees with that of $H$, except we have removed $A$ and $A'$.

Let $\mu$ be a meridian of $A$ in $Y$. This meridian bounds a disk $D_2$ in $Y$ meeting $A$ in one point. Let $D_2:=H(D_2)$, so that $D_2$ is now a disk in $X\times I$. The boundary of $D_2$ is naturally viewed as living in $H(Y)$, but we may also take its boundary to be in $H'(M)$. 

Perturb the interior of $D_2$ to intersect $H'(M)$ transversely; this intersection consists of one point in $H'(A')$. 
Tube $D_2$ to the dual sphere $G$ to obtain a disk $D_2'$ whose interior is disjoint from $H'(M)$. 
We then again thicken $D_2'$ to obtain a 4-dimensional 2-handle, which we use to surger $H'(M')$ along $H'(\mu)$, yielding another immersion $H'':N \to W$. The 3-manifold $N$ is obtained from $Y$ by integral Dehn surgery on a Hopf link (with components corresponding to $A$ and $\mu$). If $G$ has trivial normal bundle, then may arrange for the framing of the surgery along $\mu$ to be any integer, whereas if $G$ has nontrivial normal bundle then we may choose this framing only up to parity. Details on this framing may be found for example in  
\cite[Section 5]{km}.  The key points to notice are:
\begin{itemize}
    \item The choice of thickening of the disk $D_2$ to form a 4-dimensional 2-handle is not unique. By considering different thickenings, we may achieve all potential framings of some parity. 
    \item We may twist $D_2$ once about its boundary to introduce a new intersection points between $D_2$ and $H'(M)$, and then remove this intersection by tubing $D_2$ to a copy of the dual sphere $G$. When $G$ has trivial normal bundle, this does not effect which framings of $\partial D_2$ (as a curve in $H'(M)$) extend over $D_2$: hence, we may change the induced framing of the Dehn surgery by $\pm 1$. This fails when $G$ has nontrivial normal bundle.
\end{itemize}



Thus, if $G$ has trivial normal bundle then we may ensure that $N$ is homeomorphic to $Y$. In this case, we redefine $H=H''$. The total effect
 on $L$ is to remove $A$ and $A'$ while leaving the other singular circles unchanged (i.e., Figure~\ref{fig:addtwists} with $k=0$).

 In the case where $G$ has nontrivial normal bundle but the relative framing on $A$ happens to be $0$, then again $N \cong Y$. If $H(Y)$ is $s$-characteristic, the framing of the surgery along $\mu$ is forced to be odd. We can choose the framing along $\mu$ to be any odd integer $k$, and similarly redefine $H:=H''$. The effect on $L$ is to remove $A$ and $A'$ while adding $k$ full twists to the singular circles that link $A$, as in Figure~\ref{fig:addtwists}.

\begin{figure}
    \centering
    \labellist
    \small\hair 2pt
    \pinlabel{\textcolor{red}{$A$}} at -8 45
    \pinlabel{\textcolor{red}{$0$}} at -2 35
    \pinlabel{\textcolor{blue}{$\mu$}} at 51 60
    \pinlabel{\textcolor{blue}{$k$}} at 62 55
    \pinlabel{$k$} at 119 46
    \endlabellist
    \includegraphics{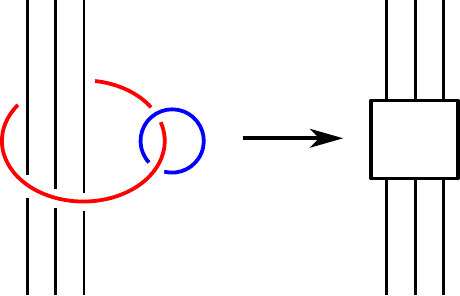}
    \caption{The dual singular circles $A$ and $A'$ are unknots and $A'$ (not pictured) is unlinked from all other circles. The circle $\mu$ is a meridian of $A$. We perform two ambient Dehn surgeries along disks bounded by $H(A)$ and $H(\mu)$, as pictured.}
    \label{fig:addtwists}
\end{figure}

\begin{remark}\label{remark:relativezero}
In fact, when $G$ has nontrivial normal bundle we may choose the relative framing of $A$ at the cost of changing $H(Y)$ (and hence $M''$) by ambient connect-sum with an embedded, nullhomologous 3-sphere. This is explained in~\cite[Page 193]{fq}. Since $G$ has nontrivial normal bundle, its normal bundle in $W$ admits a 2-dimensional subbundle whose boundary is a copy $P$ of $S^3$ with the Hopf fibration. Choose an arc $\eta$ from $A'$ to $H^{-1}(H(Y)\cap G)$ disjoint from $L$ in its interior and then connect sum $H(Y)$ to $P$ via the arc $H(\eta)$ from $H(A)$ to $P$. We redefine $H$ to be this newly obtained immersion of $Y$; the singular link is unchanged up to equivalence. (See Figure~\ref{fig:relativeframing} for a shematic of this operation.) This changes the relative framing on $A$ by $\pm 1$, with sign depending on the choice of sign of the 2-dimensional subbundle of the normal bundle of $G$ that yielded $P$. Thus, we may assume that the relative framing on $A$ is zero. 
\end{remark}

\begin{figure}
     \centering
    \labellist
    \small\hair 2pt
    \pinlabel{\textcolor{darkgreen}{$G$}} at 105 95
    \endlabellist
    \includegraphics[width=100mm]{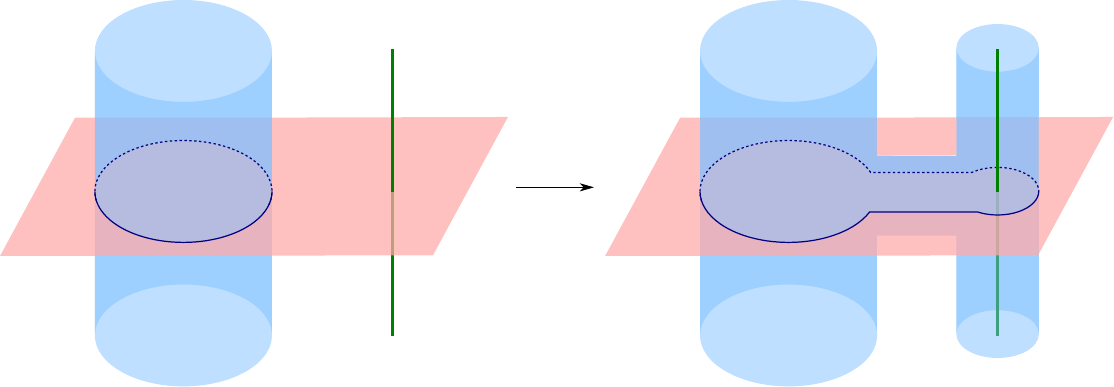}
    \caption{Left: $H(A)=H(A')$ in $W$. The horizontal and cylindrical surfaces are both cross-sections of sheets of $H(Y)$. We assume $G$ has nontrivial normal bundle. Right: We tube $H(Y)$ to a 3-sphere that bounds an Euler number $\pm1$ disk-bundle over $G$ to obtain a new immersion, which we now call $H$. This operation does not change the singular link $L$ up to equivalence, but the relative framing induced by $A'$ on $A$ changes by $\pm1$.}
    \label{fig:relativeframing}
\end{figure}


\begin{remark} \label{rmk:homologous}
Note that if $H'$ is obtained from $H$ by a sequence of ambient Dehn surgeries as in the section, then since $H$ is $\pi_1$-trivial, so is $H'$. This holds because every element of $\pi_1Y$ can be presented by a curve disjoint from the surgery curves used in the creation of $H'$. Moreover, we have $[H] = [H']$ in $H_3(W,H(\partial Y); \mathbb{Z}\pi_1 W)$ since $H$, $H'$ are cobordant via the trace of the 2-handle surgeries.  
\end{remark}




Making use of Remark~\ref{remark:relativezero}, we are now able to prove Lemma~\ref{lem:stong1}. In order to prove this lemma, it was essential to understand relative framings and how we may manipulate them given an unframed dual. We restate the lemma for convenience.

\begin{lemstong1} 
Assume that $H$ is $s$-characteristic in $X$ with a dual sphere $G$ and let $A$ and $A'$ be a dual pair of type I circles in $L$. Assume $A$ and $A'$ are both null-homotopic in $Y$. Then $$\lk(A, L - A) = \lk(A', L - A') \pmod{2}.$$
\end{lemstong1}

\begin{proof}
First we consider the simple case that $A$ and $A'$ are unknots that are unlinked from each other, so $A$ and $A'$ bound disjointly embedded disks $D_A, D_{A'}$ in $Y$ (although we expect both disks to intersect the other components of $L$). By Remark~\ref{remark:relativezero}, we may change $H$ by tubing it to embedded 3-spheres, preserving $L$, so that the relative framing induced by $D_{A'}$ on $A$ is zero. 
Then $H(D_A)\cup H(D_{A'})$ (perturbed to be embedded) is a 2-sphere in $W$ with trivial normal bundle. Moreover, we can isotope $H(D_A)\cup H(D_{A'})$ so that it intersects $H(Y)$ transversely exactly once for every intersection of $L$ with the interiors of $D_A$ and $D_{A'}$. We conclude that $L$ intersects $D_A$ and $D_{A'}$ in the same parity -- that is, we conclude that $\lk(A, L - A) \equiv \lk(A', L - A')\pmod{2}$, as desired.
\end{proof}



\begin{remark}\label{ambientdehnremark}
To summarize, the move in this subsection allows us to remove a pair of dual singular circles $A,A'$ when we have that:
\begin{enumerate}
    \item $A$ and $A'$ are unknotted and $A'$ is unlinked from all other singular circles,
    \item There exists a 2-sphere $G$ in $W$ that intersects $H(Y)$ transversely in a single point.
    \begin{enumerate}
        \item\label{ambientdehna} If $G$ has trivial normal bundle, then the effect to $L$ is to simply remove $A$ and $A'$, leaving the other singular circles unchanged.
\item If $G$ has nontrivial normal bundle, then we must add some number $n$ of full twists to the circles that link $A$. If $H(Y)$ is not $s$-characteristic then we can find some other dual sphere with trivial normal bundle and arrange for $n$ to be zero, as in \eqref{ambientdehna}. But if $H(Y)$ is $s$-characteristic, then we may only take $n$ to be any odd integer. We will always take $n\in\{\pm 1\}$).
    \end{enumerate}
\end{enumerate}





\end{remark}


\section{The Freedman--Quinn and Stong invariants}\label{sec:fqkm}

In this section, we will review the Freedman--Quinn invariant of~\cite{fq,schneiderman_teichner} and Stong invariant of a pair of 2-spheres in a 4-manifold.  We show how the vanishing of the Freedman--Quinn invariant allows us to assume that we have a singular concordance $H : S^2 \times I \to X \times I$ between the two spheres with no type II singular circles.  

\subsection{The Freedman--Quinn invariant and removing type II circles} \label{sec:removing_type_II}

We now restrict to the setting where the ambient 5-manifold is $X\times I$ for some compact, orientable 4-manifold $X$ and the 3-manifold $Y$ being immersed in $X\times I$ is $S^2\times I$.

We begin with a definition of the self-intersection invariant $\mu$ on $\pi_3 X$, and then follow with a definition of the Freedman--Quinn invariant. See~\cite{schneiderman_teichner} and~\cite{km} for more details. 

\begin{definition}[\cite{fq,schneiderman_teichner}]
Let $P\in\pi_3 X$ be represented by a based immersion of a 3-sphere $P : S^3 \to X \times I$ (here we are identifying $\pi_3 X = \pi_3(X \times I)$). Let $A_1,\ldots, A_n$ be the type II singular circles of $P$. For each $k$, 
let $g_k\in\pi_1 X$ be the group element associated to $A_k$. Since $A_k$ is type II, either $g_k$ is trivial or $g_k$ is contained in $T_X := \{g \in \pi_1 X  : g^2 = 1, g \neq 1 \}$.

Let $\mathbb{F}_2 T_X$ denote the vector space over the field with two elements with basis $T_X$.  
We then define the $\mu$ self-intersection invariant to be the map
\begin{align*}
\mu : \pi_3 X &\to\mathbb{F}_2 T_X\\
P&\mapsto \sum_{k|g_k\neq 1}g_k.
\end{align*}
\end{definition}



\begin{definition}[see also~\cite{fq,schneiderman_teichner,km}]\label{def:basedfq}
Let $S_0$ and $S_1$ be based-homotopic 
2-spheres in a 4-manifold $X$ with basepoint $z$. Choose a based singular concordance $H : S^2 \times I \to X \times I$ from $S_0$ to $S_1$ (e.g., the trace of a based homotopy from $S_0$ to $S_1$). Let $A_1,\ldots, A_n$ be the type II singular circles of $H$. For each $k$, let $g_k\in\pi_1 X$ be the group element associated to $A_k$ (these are involutions so there is no need to distinguish between elements and their inverses). Define $\mu(H)$ to be the element of $\mathbb{F}_2 T_X$ given by $$\mu(H)=\sum_{k|g_k\neq 1} g_k.$$ 
 
 The {\emph{Freedman--Quinn invariant}} $\fq(S_0,S_1)$ of the pair $S_0,S_1$ is the element of $\mathbb{F}_2 T_X/\mu(\pi_3 X)$ represented by $\mu(H)$ for some choice of based singular concordance $H$ between $S_0$ and $S_1$.
\end{definition}

In Definition~\ref{def:basedfq}, it is essential that $H$ is based. 
If $H'$ is another based immersion of $S^2\times I$ from $S_0$ to $S_1$, then the images of $H$ and $H'$ agree at their boundaries and along the vertical arc $z\times I$. 
The remaining pieces of the images of $H$ and $H'$ are immersed 3-balls that together form an immersed $S^3$ in $X\times I$. We conclude that $\mu(H)$ and $\mu(H')$ differ by an element of $\mu(\pi_3 X)$ -- see~\cite{schneiderman_teichner} for details. 





We now deal with the issue (or rather non-issue) of basepoints in the presence of a dual sphere $G$.

\begin{proposition}[{\cite[Lemma 2.1]{schneiderman_teichner}~\cite[Proposition 4.9]{km}}]\label{prop:no_base_points}
Let $S_0$ and $S_1$ be homotopic 2-spheres 
in a 4-manifold $X^4$. Assume there is an immersed 2-sphere $G$ in $X$ with $G\cdot S_i =  1\pmod{2}$. Let $H$ and $H'$ be (not necessarily based) singular concordances between $S_0$ and $S_1$. Then $\mu(H)=\mu(H')$ as elements of $\mathbb{F}_2 T_X/(\mu_3(\pi_3 X))$.
\end{proposition}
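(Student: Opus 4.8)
The plan is to show that the difference $\mu(H) - \mu(H')$ lies in $\mu(\pi_3 X)$ by building, out of $H$ and $H'$, an immersed $3$-sphere in $X \times I$ (or a suitable stabilization thereof) whose $\mu$-invariant equals precisely this difference. The subtlety, relative to the based case recalled just before the proposition, is that $H$ and $H'$ need not agree along a vertical arc $z \times I$, so gluing them directly along $\partial(S^2 \times I)$ gives a sphere that is only free-homotopically, not based-homotopically, controlled — and a priori $\mu$ of such a sphere is only well defined after the conjugation ambiguity is accounted for. The dual sphere $G$ is exactly what lets us kill that ambiguity.

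First I would reduce to the based case. Using the dual sphere $G$ with $G \cdot S_i \equiv 1 \pmod 2$, invoke (the argument behind) Proposition~\ref{prop:no_base_points}'s companion statement~\cite[Lemma 2.1]{schneiderman_teichner}: tubing $S_0$ and $S_1$ to $G$ if necessary and using finger moves guided by arcs in $G$, any homotopy between $S_0$ and $S_1$ can be promoted to a based homotopy, at the cost of introducing only type~II singular circles with \emph{trivial} group element — which by Section~\ref{sec:maketrivialtypeII} (the move introducing a trivial type~II circle) do not change $\mu(H)$ as an element of $\mathbb{F}_2 T_X$. So after such modifications we may assume both $H$ and $H'$ are based singular concordances with the same basepoint arc $z \times I$, with $\mu$ unchanged.

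Next, with $H$ and $H'$ both based, apply the observation already made in the excerpt immediately after Definition~\ref{def:basedfq}: since $H$ and $H'$ agree on $\partial(S^2 \times I)$ and on $z \times I$, removing a neighborhood of $z \times I$ from each of $S^2 \times I$ leaves two immersed $3$-balls which glue along their common boundary $2$-sphere to an immersed $3$-sphere $P : S^3 \to X \times I$, and $\mu(P) = \mu(H) - \mu(H')$ in $\mathbb{F}_2 T_X$ (the type~II singular circles of $P$ are exactly those of $H$ together with those of $H'$, with the orientation reversal on the $H'$ side accounting for the sign — but over $\mathbb{F}_2$ signs are irrelevant, and the involution condition makes $g$ and $g^{-1}$ agree anyway). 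Hence $\mu(H) - \mu(H') = \mu(P) \in \mu(\pi_3 X)$, which gives $\mu(H) = \mu(H')$ in $\mathbb{F}_2 T_X / \mu(\pi_3 X)$.

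The main obstacle is the first step: making precise that the presence of $G$ really does let one promote an unbased singular concordance to a based one without altering $\mu$. One must check that the finger moves / tubing operations used to fix the basepoint introduce only trivial-group-element type~II circles (or cancel in pairs), so that the class in $\mathbb{F}_2 T_X$ — not merely in its quotient — is preserved before we even pass to $\pi_3$; this is where the hypothesis $G \cdot S_i \equiv 1 \pmod 2$ is genuinely used, exactly as in~\cite[Lemma 2.1]{schneiderman_teichner} and~\cite[Proposition 4.9]{km}. Once that reduction is in hand, the gluing argument is routine.
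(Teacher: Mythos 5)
Your overall architecture matches the paper's: arrange for $H$ and $H'$ to share a common vertical arc, then glue the complements of that arc to an immersed $S^3$ and absorb the difference into $\mu(\pi_3 X)$ via the discussion following Definition~\ref{def:basedfq}. The second half of your argument (the gluing) is exactly what the paper does and is fine.

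The issue is the first step, which you yourself flag as ``the main obstacle'' and then do not carry out: you propose to fix the basepoint by tubing $S_0,S_1$ to $G$ and performing finger moves guided by arcs in $G$, and you would then need to verify that these operations introduce only trivial-group-element type~II circles. That verification is the entire content of the reduction, and deferring it to the cited references (which are the very sources the proposition is attributed to) leaves the proof incomplete. Moreover, the paper's mechanism is different and simpler: since $G\cdot S_i\equiv 1\pmod 2$, the $1$-manifold $G\times I\cap H(S^2\times I)$ has an odd number of endpoints in each of $X\times\{0\}$ and $X\times\{1\}$, so some arc $\eta$ of this intersection runs from bottom to top. After a small isotopy near $S_1\times\{1\}$, $\eta$ projects to a closed loop $C$ in $X$ lying on the simply connected sphere $G$ (away from its self-intersections), so $C$ is nullhomotopic; hence $H$ can be isotoped so that $\eta$ is vertical. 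Doing the same for $H'$ and matching the two arcs produces the common vertical arc with \emph{no} modification of the singular links at all --- no tubing, no finger moves, and nothing to check about newly created type~II circles. I would recommend replacing your tubing/finger-move sketch with this intersection-arc argument; as written, your proof has a genuine gap precisely at the step where the hypothesis on $G$ is used.
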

\begin{proof}
Since $G\times I\cap H(S^2\times I)$ is a 1-manifold with an odd number of endpoints in each of $S_0\times\{0\}, S_1\times\{1\}$, there is an arc $\eta$ in $G\times I\cap H(S^2\times I)$ that runs from $X\times0$ to $X\times 1$. By an isotopy in a neighborhood of $S_1\times\{1\}$, we can arrange for $\eta$ to project to a closed loop $C$ in $X$. Since $Y$ is simply connected, $C$ represents a well-defined element of $\pi_1 X$. (Note also that since $S_1$ is simply connected, the choice of isotopy near $S_1$ that made $C$ a closed loop does not affect the group element represented by $\eta$.)

This closed loop $C$ is contained in the 2-sphere $G$ away from its self-intersections, so $C$ is nullhomotopic. We can thus isotope $H$ so that there is a vertical arc $\eta$ in $H(S^2\times I)\cap G\times I$. We repeat from $H'$ to find a vertical arc $\eta'$ in $H'(S^2\times I)\cap G\times I$, and then isotope these arcs to agree. The proof now follows from the discussion after Definition~\ref{def:basedfq}.
\end{proof}

Combining Definition~\ref{def:basedfq} and Proposition~\ref{prop:no_base_points}, we conclude that the Freedman--Quinn invariant $\fq(S_0,S_1)$ is well-defined if $S_0,S_1$ are homotopic (not necessarily based-homotopic) and there is an immersed 2-sphere $G$ in $X$ intersecting each of $S_i$ in an odd number of points. Note that if $H$ is a singular concordance between $S_0$ and $S_1$, and $H(S^2\times I)$ intersects some 2-sphere $G$ transversely once, then the projection of $G$ to $X$ is an immersed sphere intersecting each $S_i$ in an odd number of points.

\begin{definition}[\cite{fq,schneiderman_teichner}]\label{def:fq}
Let $S_0$ and $S_1$ be homotopic embedded 2-spheres in an orientable 4-manifold $X^4$ such that there exists an immersed 2-sphere $G$ in $X$ with $G\cdot S_i =  1\pmod{2}$. Let $H : S^2\times I \to X \times I$ be a singular concordance between $S_0$ and $S_1$.  The {\emph{Freedman--Quinn invariant}} $fq(S_0,S_1)$ of the pair $(S_0,S_1)$ is defined to be $\mu(H)$ viewed as an element of $\mathbb{F}_2 T_X/(\mu_3(\pi_3 X))$.
\end{definition}

Note that if $S_0,S_1$ are based-homotopic, then this definition coincides with Definition~\ref{def:basedfq}. 

\begin{proposition}\label{fq_typeII}
Suppose that $S_0$ and $S_1$ are homotopic embedded 2-spheres in a 4-manifold $X$ such that there exists an immersed 2-sphere $G$ in $X$ with $G \cdot S_i = 1 \pmod{2}$ and such that  $\fq(S_0,S_1)=0$.  Then there exists a singular concordance $H$ between $S_0$ and $S_1$ with no type II singular circles.
\end{proposition}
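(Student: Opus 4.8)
The plan is to start with an arbitrary singular concordance $H$ from $S_0$ to $S_1$ and eliminate its type II singular circles one at a time, paying off any "cost" incurred by the removal procedure using the dual sphere $G$ and the moves developed in Section~\ref{sec:moves}. First I would fix a singular concordance $H:S^2\times I\to X\times I$ (for instance the trace of a regular homotopy, which exists since $S_0,S_1$ are homotopic) and record that, because $G$ is an immersed dual to $S_0$, the sphere $G\times\{1/2\}$ (pushed into $X\times I$ and perturbed) is a 2-sphere meeting $H(S^2\times I)$ transversely an odd number of times; after tubing along an arc in the intersection I may assume $G$ meets $H(S^2\times I)$ transversely once, so all the moves of Section~\ref{sec:moves} that require a dual sphere are available. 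By Definition~\ref{def:fq}, $\fq(S_0,S_1)=0$ means $\mu(H)=\sum_{k\mid g_k\neq 1}g_k$ lies in $\mu(\pi_3 X)$, i.e.\ equals $\mu(P)$ for some $P\in\pi_3 X$.

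The key steps, in order, would be: \emph{(1)} Use the element $P\in\pi_3 X$ realizing $\mu(H)$ to correct the nontrivial-group-element type II circles. Concretely, ambient-connect-sum $H$ with (a pushed-in copy of) $P$; this is a homotopy of $H$ rel boundary and changes the collection of type II circles by exactly the type II circles of $P$, so that after this modification the (nontrivial) group elements of the type II circles cancel in pairs in $\F_2 T_X$ — meaning the type II circles with each fixed nontrivial involution $g$ can be matched into pairs with the same associated group element. \emph{(2)} For a matched pair of type II circles $A,B$ with the same group element $g$ (with $g$ a nontrivial involution, or $g=1$), use a Whitney-move argument from Section~\ref{sec:whitney}/\ref{sec:cleanwhitney}: since $a=b$ there are arcs $\gamma_1,\gamma_2$ in $S^2\times I$ joining them and (after correcting the band framings, which is possible because $G$ is a dual sphere, by Proposition~\ref{prop:criterionextend}) a clean Whitney disk $D$; performing the Whitney move along $D$ does band surgery turning the two circles $A,B$ into a single circle which, being a doubled band, is now type I — its image is a trivial arc paired with itself — and introduces only trivial type I dual pairs (none, since $D$ is clean). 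Thus each such pair of type II circles is traded for type I circles. \emph{(3)} Handle the remaining type II circles with trivial group element directly: a trivial type II circle, being a null-homotopic circle that double-covers its image, can be removed — e.g.\ it is the reverse of the construction in Section~\ref{sec:maketrivialtypeII}, or one performs a Whitney move pairing it with an artificially-introduced canceling trivial type II circle. Iterating, one reaches a singular concordance with no type II circles. Throughout, one checks that $[H]\in H_3(X\times I,X\times\{0,1\};\Z\pi_1 X)$ and $\pi_1$-triviality are preserved, as guaranteed by Remark~\ref{rem:remzpi1class}, so this really is a singular concordance from $S_0$ to $S_1$.

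I expect the main obstacle to be Step~(2): making sure that type II circles with the \emph{same} group element can actually be paired off by a clean Whitney move whose bands are correctly framed. The subtlety is that two type II circles $A$, $B$ are joined by a single pair of arcs, and one must verify the orientation/annulus condition of Section~\ref{sec:whitneyorient} and then fix the $\omega_2$-framing obstruction — this is exactly where the dual sphere $G$ is used (Propositions~\ref{prop:getridofwhitneydiskints} and~\ref{prop:criterionextend}), and where, in the $s$-characteristic case with $w_2(G)=1$, the criterion lives in $\Z/2\oplus\Z/4$ rather than $\Z/2\oplus\Z/2$ and one has to argue the framing can still be arranged. A secondary delicate point is Step~(1): one must be careful that ambient-connect-summing with $P\in\pi_3 X$ changes the type II circles by precisely $\mu(P)$ in $\F_2 T_X$ (the type I circles of $P$, and any cancellation among $P$'s own circles, should not interfere), which is the content of the discussion following Definition~\ref{def:basedfq} together with Proposition~\ref{prop:no_base_points}. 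Once these framing bookkeeping issues are dispatched, the removal is a finite iteration and the proposition follows.
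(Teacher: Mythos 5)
Your proposal follows essentially the same route as the paper's proof: first connect-sum with a class $P\in\pi_3 X$ so that $\mu(H)=0$ in $\F_2 T_X$ itself (not merely in the quotient), fix the parity of the trivial-element type II circles by introducing one more via the move of Section~\ref{sec:maketrivialtypeII}, and then cancel the type II circles in pairs by Whitney moves along disks bounded by $H(\gamma_1\cup\gamma_2)$. One bookkeeping correction: by Remark~\ref{rem:checkannulus} the band surgery on a matched pair of type II circles produces \emph{two} circles forming a type I dual pair, not ``a single circle whose image is paired with itself'' (a single circle double-covering its image would still be type II); also, the paper does not insist on a clean Whitney disk at this step, since the extra type I dual pairs arising from intersections of the disk with $H(S^2\times I)$ are harmless for this proposition, whereas your insistence on cleanliness (via Proposition~\ref{prop:getridofwhitneydiskints}) is fine but makes the framing discussion more delicate than necessary.
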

 

The main tool in proving Proposition~\ref{fq_typeII} is the Whitney move of Section~\ref{sec:whitney}. In brief, our strategy is to use the fact that $\fq(S_0,S_1) = 0$ to initially choose $H$ to have $\mu(H)=0\in \mathbb{F}_2 T_X$, rather than $\mu(H)=0$ only after quotienting by $\mu(\pi_3 X)$.  We will then use the Whitney move to remove type II circles in pairs.  The existence of such cancelling pairs is guaranteed by the assumption that $\fq(S_0,S_1) = 0$.  A different approach to this argument for eliminating the type II singular circles using the hypothesis that $\fq(S_0,S_1) = 0$ is given in~\cite{km} (see in particular Figure 12).  

\begin{proof}[Proof of Proposition~\ref{fq_typeII}]

Let $H_1:S^2 \times I\to X\times I$ be a singular concordance between $S_0$ and $S_1$.  Since $\fq(S_0,S_1)=0$, we have $\mu(H_1)=0\in\mathbb{F}_2T_X/\mu(\pi_3 X)$. Then there exists a based immersion $P:S^3\to X\times I$ with $\mu(H_1)+\mu(P)=0\in\mathbb{F}_2 T_X$. Push the basepoint of $P$ off the basepoint of $X\times I$, interior to $H_1(S^2 \times I)$, and obtain a new immersion $H_2:S^2\times I \to X\times I$ by connect-summing $H_1$ and $P$. Note that the singular link $L_2$ of $H_2$ is a split union of those of $H_1$ and $P$, along with more type I circles corresponding to intersections of $H_1(S^2 \times I)$ with $P(S^3)$. Thus, $\mu(H_2)=\mu(H_1)+\mu(P)=0\in\mathbb{F}_2 T_X$.



By definition of $\mu$, every nontrivial 2-torsion element in $\pi_1 X$ is associated to an even number of type II singular circles in $L_2$. We must also arrange for that to be true for the trivial element. To that end, if $L_2$ has an odd number of type II circles associated to the trivial element of $\pi_1 X$, perform the move of Section~\ref{sec:maketrivialtypeII} to introduce a new type II circle associated to the trivial element and denote the resulting immersion by $H_3$. Otherwise, just let $H_3:=H_2$.

Now $H_3$ is an immersion of $S^2 \times I$ whose singular link $L_3$ 
has the property that the type II circles of $L_3$ can be put into disjoint pairs so that in each pair, the two type II circles correspond to the same element in $T_X \cup \{1\}$.

Let $C$ and $C'$ be one such pair of type II circles in $L_3$.  Let $x,x'$ be distinct points in $C$ with $H_3(x)=H_3(x')$. Let $y,y'$ be distinct points in $C'$ with $H_3(y)=H_3(y')$. Choose arcs $\gamma_1,\gamma_2$ in $Y$ from $x$ to $y$ and from $y'$ to $x'$, respectively. Take $\gamma_1,\gamma_2$ disjoint from $L_3$ in their interiors. Since $C$ and $C'$ correspond to the same element of $T_X\cup\{1\}$, $H_3(\gamma_1\cup\gamma_2)$ is nullhomotopic (see Section~\ref{sec:finddisks} and note that since these are type II circles the orderings of $x,x'$ and $y,y'$ are irrelevant). Therefore, there is an embedded disk $D$ in $X\times I$ with $\partial D=H_3(\gamma_1\cup\gamma_2)$. Frame $\gamma_1$ and $\gamma_2$ so as to yield framed bands $b_1,b_2$ for some (not necessarily clean) Whitney disk. Perform a Whitney move to obtain a new immersion which we denote by $H_4 : S^2 \times I \to X \times I$.

Recall from Section~\ref{sec:finddisks} that the singular link $L_4$ of $H_4$ differs from $L_3$ by surgery along the bands $b_1,b_2$, along with the potential addition of dual pairs of type I circles. 
By Remark~\ref{rem:checkannulus}, surgering $C\sqcup C'$ along $b_1$ and $b_2$ yields exactly two circles. These two circles in $L_4$ each include one arc in $C$ and one arc in $C'$ and have the same image under $H_4$, i.e., they are a type I dual pair. The net result of the above sequence of moves is that $H_4$ has two fewer type II circles than $H_3$, and the type II circles of $H_4$ still occur in pairs associated to the same fundamental group element. We may therefore repeat this process until obtaining an immersion $H:S^2 \times I \to X \times I$ that has no type II circles.
\end{proof}

\begin{remark}\label{rmk:hypos_fq}
Note that if $\mu(\pi_3 X)=0$, then such a map $H$ as in Proposition~\ref{fq_typeII} exists for any desired homology class in $H_3(X \times I, X \times \{0,1\};\mathbb{Z}\pi_1X)$ that can be represented by a singular concordance between $S_0$ and $S_1$.  This is because $\fq(S_0,S_1)=0$ implies that for any singular concordance $H'$ from $S_0$ to $S_1$, we have $\mu(H')=0$ (rather than just vanishing in a quotient). Then we need not add any $\pi_3 X$ elements to obtain $H$ in the proof of Proposition~\ref{fq_typeII}. The other operations used in the proof preserve the homology class of the image
(see Remark~\ref{rem:remzpi1class}).
\end{remark}

\begin{figure}
\begin{centering}
\labellist
\small\hair 2pt
\pinlabel{$x$} at 26 85
\pinlabel{$x'$} at 27.5 53
\pinlabel{$y$} at 142 85
\pinlabel{$y'$} at 143.5 53
\pinlabel{$\textcolor{red}{b_1}$} at 85 115
\pinlabel{$\textcolor{red}{b_2}$} at 86.5 20
\endlabellist
\includegraphics[width=90mm]{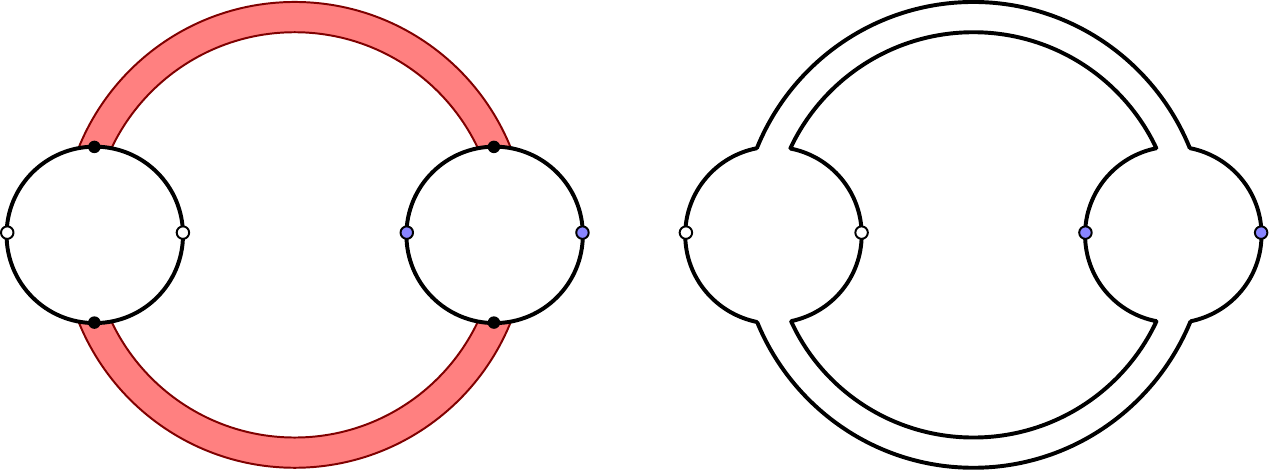}
\caption{Left: the type II curves $C$ and $C'$ in $L_3$. We mark some pairs of points with the same image under $H_3$. The images of the bands $b_1$ and $b_2$ bound a framed Whitney disk. Right: After performing the Whitney move to obtain $H_4$, the two type II circles become one pair of type I circles. (We omit newly introduced type I pairs from this drawing.)}\label{fig:whitneyremovetypeii}
\end{centering}
\end{figure}

\subsection{The Stong invariant}\label{sec:km}

In this section we define the Stong invariant for certain pairs of homotopic, oriented 2-spheres $S_0,S_1$ embedded in a 4-manifold $X$. Like the Freedman--Quinn invariant, the Stong invariant is defined by studying the intersection set of a singular concordance between $S_0$ and $S_1$. In~\cite{stong}, Stong gives an argument for why this invariant is well-defined (with some additional hypotheses necessary to define the invariant at all) when the element of $H_3(X\times I,X \times \{0,1\};\mathbb{Z}\pi_1)$ represented by a concordance is specified (the theorem of~\cite{stong} is phrased for 3-spheres in a 5-manifold but we discuss later how this extends to pairs of 2-spheres in a 4-manifold -- this is also mentioned in~\cite{schneiderman_teichner}).  Later in this section, we discuss a quotient of the target in which $\km$ is well-defined without specifying the homology class represented by $H(S^2 \times I)$.





Let $S_0$ and $S_1$ be embedded, $s$-characteristic spheres in an orientable 4-manifold $X$ with $\fq(S_0,S_1) = 0$ (here, either take $S_0,S_1$ to be based-homotopic or to be homotopic with an immersed 2-sphere intersecting each $S_i$ in an odd number of points as in Proposition~\ref{prop:no_base_points}).   
We will define an invariant $\km(S_0,S_1)$ that is valued in a quotient of $H_1(X; \mathbb{Z}/2\mathbb{Z})$. The invariant $\km$ is secondary to $\fq$, in the sense that $\km$ cannot be defined for a pair of spheres with nonzero Freedman--Quinn invariant. Several choices will be made in defining $\km$.  Stong~\cite{stong} shows (see also~\cite{km2}) that $\km$ is independent of these choices.   


Before proceeding, we need to define the {\emph{relative twist number}} of a pair of type II components of a singular link $L$.

\begin{definition}
Let $A$ and $B$ be type II circles in $L$ that are associated to the same element of $\pi_1(X)$. Let $b_1,b_2$ be framed bands connecting $A$ and $B$. Let $\gamma,\gamma'$ be the two circles that result from surgering $A$ and $B$ along $b_1,b_2$. (Note that if $H(S^2\times I)$ does not have a dual, then we may not be able to find a clean Whitney disk bounded by $H(b_1\cup b_2)$, so we are not actually performing a Whitney move to obtain $\gamma$ and $\gamma'$.) The {\emph{relative twist number}} of $A$ and $B$ is \[\tw(A,B) := \lk(\gamma, \gamma') + \lk(\gamma, L - \{A,B\}))\pmod{2}.\]
\end{definition}

A proof that $\tw$ is well defined is given by Stong~\cite{stong} (see also~\cite{km2}).  We now define the Stong invariant.


\begin{definition}[\cite{stong}]\label{def:km}
Assume $\fq(S_0,S_1)=0$ for $s$-characteristic, oriented 2-spheres $S_0, S_1$ in a 4-manifold $X$. Let $H:S^2\times I\to X\times I$ be a singular concordance between $S_0$ and $S_1$. Assume that either $S_0,S_1$ are based homotopic and $H$ is based or assume there is a 2-sphere $G$ in $X\times I$ intersecting $H(S^2\times I)$ in an odd number of points.

The singular link $L$ of $H$ consists of pairs of type I components, and type II components each corresponding to an element of $T_X\cup\{1\}$.
\begin{enumerate}
    \item For each pair of type I circles, choose one to be active and the other to be inactive. Denote the active circle by $A$, its associated group element in $\pi_1 X$ is named $a$ (i.e., the element we obtain by going into the sheet $A$ and leaving on the other sheet).
    \item For each $g\in T_X$, let $L_g$ denote the collection of type II components of $L$ associated to $g$.
    \item For each $h\in\pi_1 X$, let $\epsilon_h$ denote the image of $h$ in $H_1(X;\mathbb{Z}/2\mathbb{Z})$ under $\mathbb{Z}/2\mathbb{Z}$-abelianization.
    \end{enumerate}
    
    Let 
    $$
    \Delta(H) := \sum_{A\substack{\text{ type I,}\\\text{ active}}} \lk(A, L-A) \epsilon_a + \sum_{g \in T_X} \sum_{B,C\in H_g}\tw(B,C) \epsilon_g\in H_1(X;\mathbb{Z}/2\mathbb{Z})
    $$
    
    Let $\Self(S_0)$ denote the set of singular concordances between $S_0$ and $S_0$.  Note that $\Self(S_0)$ has a monoid structure given by stacking, $\Delta : \Self(S_0) \to H_1(X; \mathbb{Z}/2\mathbb{Z})$ is a monoid homomorphism, and since every element of $H_1(X; \mathbb{Z}/2\mathbb{Z})$ is order 2, the image $\Delta(\Self(S_0))$ is a subgroup of $H_1(X; \mathbb{Z}/2\mathbb{Z})$.  

    
    Given a fixed homology class $\alpha\in H_3(X\times I,X\times\{0,1\};\mathbb{Z}\pi_1X)$, the \emph{Stong invariant} of $S_0, S_1$ relative to $\alpha$ is
    $$
    \km(S_0,S_1;\alpha) := \Delta(H) \in H_1(X; \mathbb{Z}/2\mathbb{Z})
    $$
    for some singular concordance $H$ between $S_0$ and $S_1$ with $[H] = \alpha$.
    
    The {\emph{Stong invariant}} of $S_0, S_1$ is \[\Delta(H)\in H_1(X; \mathbb{Z}/2\mathbb{Z}) / (\Delta(\Self(S_0))).\]
\end{definition}

We leave it as an easy exercise that for homotopic 2-spheres $S_0,S_1$, we have $\Delta(\Self(S_0))=\Delta(\Self(S_1))$ and thus Definition~\ref{def:km} is symmetric in $S_0, S_1$.

The notation $\Delta$ comes from Stong~\cite{stong}. The invariant $\Delta$ serves a similar role in the definition of $\km$ that $\mu$ does for $\fq$. 
It should not be obvious why $\Delta(H)$ and $\km(S_0,S_1;\alpha)$ are well defined, as this is a theorem of Stong (stated in a slightly different context). We plan to explain this in significant detail for concordances in~\cite{km2}.

Note that, if $S_0$ and $S_1$ are concordant, then $\km(S_0,S_1) = 0$.  Similarly, if $S_0$ and $S_1$ are concordant via a concordance representing $\alpha$ we have $\km(S_0,S_1;\alpha)=0$.

While the set $\Self(S_0)$ is unwieldy, $\Delta(\Self(S_0))$ can often be computed in practice.  For example, when $X$ is a 2-handlebody, then we have $H_3(X\times I,X\times\{0,1\}; \mathbb{Z}\pi_1X) = 0$ -- that is, every element of $\Self(S_0)$ is homologous to $S_0\times I$, so $\Delta(\Self(S_0))=0$. Then $\km(S_0,S_1)$ may be computed using any singular concordance.

More generally, 
if we can find a 
representative in $\Self(S_0)$ from each relevant $H_3(X\times I,X\times\{0,1\};\mathbb{Z})$ class (which is not too daunting when this group is $0$ or $\mathbb{Z}$, for example) then 
and modify it to represent different $H_3(X\times I,X\times\{0,1\};\mathbb{Z}\pi_1)$ classes and thus compute $\Delta(\Self(S_0))$. We discuss this again in Example~\ref{km_example}.



In Section~\ref{sec:hopf}, we will see that when $H(S^2\times I)$ has a dual sphere, then in fact we can modify $H$ (via the moves in Section~\ref{sec:moves} and thus preserving $[H]$) and arrange for $L$ to be a Hopf link consisting of a type I pair $A,A'$. In this case, $\km(S_0,S_1;\alpha)=\epsilon_a$, and when $\epsilon_a=1$ then it is possible to replace $H$ with an embedding. 

\subsection{Problems with positive genus} \label{sec:problems}
In this section we discuss the issue of extending the invariants $\fq$ and $\km$ to positive genus surfaces and raise a question regarding the definition of $\km$.  

In~\cite{km}, we discussed the case what we called ``$\pi_1$-negligible," homotopic, orientable genus-$g$ surfaces $F_0$ and $F_1$ embedded in a 4-manifold $X$. This was an abuse of terminology; in the setting of this paper we would say $\pi_1$-trivial rather than $\pi_1$-negligible. Unfortunately, there is an error in the proof of Lemma 6.1 and Proposition 6.2 in~\cite{km}, drawn to our attention by Mark Powell.
The strategy of the proof of~\cite[Lemma 6.1]{km} (which~\cite[Proposition 6.2]{km} relies on) was to compress an immersion $H:\Sigma_g\times I\to X\times I\times\mathbb{R}$ with $H(\Sigma_g\times0)=H(\Sigma_g\times 1)$ along a 3-dimensional 2-handle to obtain an immersion $H':\Sigma_{g-1}\times I\to X\times I\times\mathbb{R}$ with $H'(\Sigma_{g-1}\times0)=H(\Sigma_{g-1}\times 1)$, eventually reducing to the understood setting of an immersion of $S^2\times I$. Unfortunately, we can only ensure that $H(\Sigma_g\times0)=H(\Sigma_g\times 1)$ as submanifolds, not that $H|_{\Sigma_g\times 0}=H|_{\Sigma_g\times 1}$ as a map, so there is no guarantee that our compression will take place along the same circles in $\Sigma_g\times 0$ and $\Sigma_g\times 1$. Even if these circles are the same, there is again no guarantee that the compressing disks themselves will agree, so again we cannot ensure that $H'(\Sigma_{g-1}\times0)=H(\Sigma_{g-1}\times 1)$.



\begin{question}
Can the Freedman--Quinn invariant be defined for (some) arbitrary genus based-homotopic surfaces?  Similarly, can the Stong obstruction be extended to include suitable higher-genus surfaces?
\end{question}

The Stong invariant seems even more difficult to extend to positive-genus surfaces due to its constrained definition. The Freedman--Quinn invariant has an alternative formulation in terms of immersions into the 6-manifold $X\times I\times\mathbb{R}$ (see~\cite{schneiderman_teichner}) and lifts of immersions to the universal cover of $X\times I$ (see~\cite{km}). It seems possible that this flexibility could be useful in an attempt to study positive-genus surfaces.  We are motivated to ask the following question.

\begin{question}
Is there a 6-dimensional definition of $\km$? If so, does this make it easier to prove that $\km$ is well defined $($as compared to~\cite{stong}$)$?
\end{question}

\section{Reducing the singular link when \texorpdfstring{$H(S^2\times I)$}{the image of H} has a dual sphere} \label{sec:reduce_circles}

In this section, we will use the moves presented in Section~\ref{sec:moves} to simplify $H$ when $H(S^2\times I)$ admits a dual sphere, eventually proving Theorem~\ref{thm:main1}. As in Section~\ref{sec:fqkm}, we only consider immersions from $S^2\times I$ to $X\times I$, rather than a general 3-manifold into a general 5-manifold (as was the case in Section~\ref{sec:moves}). The arguments in this section follow those of Stong (see in particular Figures 8--16 of~\cite{stong} and compare with our Figures~\ref{fig:cutoffcircles}--\ref{fig:removetwohopfs}). 


\subsection{Eliminating all intersections when \texorpdfstring{$H(S^2\times I)$}{the image of H} is not \texorpdfstring{$s$}{s}-characteristic}\label{sec:framed}

The key difference between the situations in Theorem~\ref{thm:main1} and Theorem~\ref{thm:main2} is that when $H(S^2 \times  I)$ has a dual and is not $s$-characteristic, we can find a dual sphere with trivial normal bundle.

\begin{lemma}\label{lem:notschar}
Let $H:Y^3\to W^5$ be an immersion so that there is a 2-sphere $G$ embedded in $W$ intersecting $H(Y)$ transversely once. Suppose $H(Y)$ is not $s$-characteristic. Then there is a 2-sphere $G'$ embedded in $W$ with trivial normal bundle intersecting $H(Y)$ transversely once.
\end{lemma}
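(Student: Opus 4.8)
Here is the plan.

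\medskip

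\noindent\textbf{Reformulation.} The first step is to recast both hypothesis and conclusion in terms of a single mod-$2$ invariant of embedded $2$-spheres in $W$. For an embedded $2$-sphere $S\subset W$, the notation $[S]\cdot[S]\in\mathbb{Z}/2\mathbb{Z}$ means the mod-$2$ normal Euler number $\langle w_2(\nu_S),[S]\rangle$, and since an oriented rank-$3$ bundle over $S^2$ is classified by $\pi_2(BSO(3))=\pi_1(\SO(3))=\mathbb{Z}/2\mathbb{Z}$ (detected by $w_2$), the normal bundle $\nu_S$ is trivial exactly when $[S]\cdot[S]=0$. Splitting $TW|_S=TS\oplus\nu_S$ and using $w_1(TS)=0$ and $\langle w_2(TS),[S]\rangle=\chi(S^2)\equiv 0$, one gets
$$
[S]\cdot[S]\;=\;\langle w_2(\nu_S),[S]\rangle\;=\;\langle w_2(TW),[S]\rangle\in\mathbb{Z}/2\mathbb{Z},
$$
so the triviality of $\nu_S$ depends only on $[S]\bmod 2$; in particular $[S]\cdot[S]$ is additive under ambient connected sum (tubing two spheres along an arc in $W$), as is the mod-$2$ count $S\cdot H(Y)$ of transverse intersection points (a generic $S$ misses the $1$-dimensional double-point set of $H(Y)$). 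With this language, the hypothesis that $H(Y)$ is not $s$-characteristic yields an embedded $2$-sphere $S\subset W$ with $S\cdot H(Y)+[S]\cdot[S]\equiv 1\pmod 2$, and the goal is an embedded $2$-sphere $G'$ with $[G']\cdot[G']=0$ meeting $H(Y)$ transversely in exactly one point.

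\medskip

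\noindent\textbf{Producing a sphere with trivial normal bundle and odd intersection.} If $[G]\cdot[G]=0$ then $G$ already works, so assume $[G]\cdot[G]=1$. If the test sphere has $[S]\cdot[S]=1$, then $[S]\cdot[S]$ is even, hence $S\cdot H(Y)$ is even, and the ambient connected sum $G\#S$ along an arc in $W$ (generically disjoint from the $3$-manifold $H(Y)$) satisfies $(G\#S)\cdot(G\#S)=1+1=0$ and $(G\#S)\cdot H(Y)\equiv 1$. If instead $[S]\cdot[S]=0$, then $\nu_S$ is already trivial and $S\cdot H(Y)\equiv 1$, so $S$ itself works. In every case we obtain an embedded $2$-sphere $G_0\subset W$ with $[G_0]\cdot[G_0]=0$ that meets $H(Y)$ transversely in an odd number of points $p_1,\dots,p_{2m+1}$.

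\medskip

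\noindent\textbf{Cutting the geometric intersection down to one point.} Since $\nu_G$ has rank $3>2=\dim S^2$, a generic section is nowhere zero, so $G$ has $2m$ mutually disjoint pushoffs $G_1,\dots,G_{2m}$ in $W$, each meeting $H(Y)$ transversely once near $q=G\cap H(Y)$. Choose disjoint embedded arcs $\gamma_i\subset H(Y)$ from $p_i$ to $G_i\cap H(Y)$, avoiding the remaining $p_j$ and the double-point set of $H(Y)$, and tube $G_0$ to $G_i$ along $\gamma_i$, pushing each tube off $H(Y)$ in a direction normal to $H(Y)$ so that it introduces no new intersections. The result $G':=G_0\#G_1\#\cdots\#G_{2m}$ is an embedded $2$-sphere meeting $H(Y)$ transversely exactly once (at $p_{2m+1}$), and since $[G']=[G_0]+2m[G]$ agrees with $[G_0]$ mod $2$ we still have $[G']\cdot[G']=\langle w_2(TW),[G']\rangle=\langle w_2(TW),[G_0]\rangle=0$, so $\nu_{G'}$ is trivial, as required. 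The step that looks like the obstacle is this last one --- removing intersection points without reintroducing a nontrivial normal bundle --- together with the usual framing ambiguities of the tubes; but the reformulation of the first paragraph dissolves it, since the normal bundle type of any embedded $2$-sphere is read off its mod-$2$ homology class alone, and keeping the arcs, pushoffs, and tubes disjoint is routine transversality in dimension $5$.
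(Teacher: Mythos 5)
Your proof is correct and takes essentially the same route as the paper's: both reduce triviality of the normal bundle to the vanishing of $\langle w_2(TW),\cdot\rangle$ on the mod-$2$ homology class, and both then tube the witness sphere for non-$s$-characteristicity to parallel copies of $G$ along arcs in $H(Y)$ so as to cut the geometric intersection number with $H(Y)$ down to one while keeping the mod-$2$ self-intersection zero. (The only blemish is the garbled clause ``then $[S]\cdot[S]$ is even'' in your second paragraph, which should read ``then $S\cdot H(Y)$ is even''; the conclusion you draw from it is the intended one.)
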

\begin{proof}
If $G$ has trivial normal bundle, then the lemma is obviously trivial. So assume $G$ has nontrivial normal bundle.

Since $H(Y)$ is not $s$-characteristic, there is some 2-sphere $R$ embedded in $X\times I$ with $R\cdot H(Y)\not\equiv [R]\cdot [R]\pmod{2}$. Let $n$ be the geometric intersection number of $R$ and $H(Y)$. 
Let $G_1,\ldots, G_{n-1}$ be parallel copies of $G$. Choose disjoint arcs in $H(Y)$ connecting the intersection points of each $G_i$ with distinct points in $R\cap H(Y)$. 
Use these arcs to surger $R\sqcup G_1\sqcup\cdots\sqcup G_{n-1}$ to obtain one 2-sphere $G'$ that intersects $H(Y)$ transversely once.

We have 
\begin{align*}
    [G']\cdot[G'] &= ((n-1)[G] +  [R])\cdot((n-1)[G] +  [R]) \\
           &= (n-1)^2[G]\cdot[G] + [R]\cdot[R]\pmod{2} \\
           &= n - 1 + [R]\cdot[R]\pmod{2} \\
           &= 0 \pmod{2},
\end{align*}
since $[R]\cdot[R] \not\equiv n\pmod{2}$ by assumption.
\end{proof}

Now we can prove Theorem~\ref{thm:main1}, which we restate here for convenience.

\begin{mainthm1}
Suppose that $S_0$ and $S_1$ are embedded, oriented, homotopic 2-spheres in an orientable 4-manifold $X$ such that $S_0$ has an immersed dual sphere $G$ in $X$ (i.e., $G$ and $S_0$ intersect in a single point). Assume that $S_0$ is not $s$-characteristic. Then $S_0$ and $S_1$ are concordant if and only if $\fq(S_0,S_1)=0$.
\end{mainthm1}

\begin{proof}
If $S_0$ and $S_1$ are concordant, then $H$ can be chosen to be an embedding. Then $\fq(S_0,S_1)=\mu(H)=0$. Thus, the ``only if'' portion of the statement is trivial.

Now suppose $\fq(S_0,S_1)=0$. By Proposition~\ref{fq_typeII} we can choose a singular concordance between $S_0$ and $S_1$ whose singular link $L$ consists of only type I circles. Push the 2-sphere $G$ into $X\times I$, so that it is an embedded 2-sphere intersecting the image of $H$ transversely once. By Lemma~\ref{lem:notschar}, take $G$ to have trivial normal bundle. 

We use the ambient surgery technique of Section~\ref{sec:ambient_dehn} to remove dual pairs of type I singular circles. To do this, choose immersed disks bounding the singular circles in $S^2 \times I$ that only have clasp intersections as in Lemma~\ref{lem:clasp}.  Suppose that $A$ and $B$ are two singular circles (that are potentially dual or not distinct) such that the disks that they bound have a clasp intersection. In Figure~\ref{fig:frameddual} we illustrate the proceeding set of moves.

\begin{figure}
    \centering
    \labellist
    \small\hair 2pt
    \pinlabel{\textcolor{red}{$A'$}} at 0 55
    \pinlabel{\textcolor{red}{$A$}} at 13 55
    \pinlabel{\textcolor{blue}{$B$}} at 27 55
    \pinlabel{\textcolor{blue}{$B'$}} at 40 55
    
     \pinlabel{\textcolor{red}{$A'$}} at 82 55
    \pinlabel{\textcolor{red}{$A$}} at 95 55
    \pinlabel{\textcolor{blue}{$B$}} at 109 55
    \pinlabel{\textcolor{blue}{$B'$}} at 122 55
    \pinlabel{\textcolor{green}{$E$}} at 76 75
    \pinlabel{\textcolor{green}{$E'$}} at 129 75
    
       \pinlabel{\textcolor{red}{$A'$}} at 164 55
    \pinlabel{\textcolor{red}{$A$}} at 177 55
    \pinlabel{\textcolor{blue}{$B$}} at 191 55
    \pinlabel{\textcolor{blue}{$B'$}} at 204 55
    \pinlabel{\textcolor{green}{$E$}} at 155 72
    \pinlabel{\textcolor{green}{$E'$}} at 210 73
      \pinlabel{\textcolor{brown}{$F'$}} at 172 73
    \pinlabel{\textcolor{brown}{$F$}} at 222 82
    
          \pinlabel{\textcolor{red}{$A'$}} at 122 -6
    \pinlabel{\textcolor{red}{$A$}} at 135 -6
    \pinlabel{\textcolor{blue}{$B$}} at 149 -6
    \pinlabel{\textcolor{blue}{$B'$}} at 162 -6
      \pinlabel{\textcolor{brown}{$F'$}} at 130 12
    \pinlabel{\textcolor{brown}{$F$}} at 170 11
    
          \pinlabel{\textcolor{red}{$A'$}} at 40 -6
    \pinlabel{\textcolor{red}{$A$}} at 53 -6
    \pinlabel{\textcolor{blue}{$B$}} at 67 -6
    \pinlabel{\textcolor{blue}{$B'$}} at 80 -6
    
    \endlabellist
    \includegraphics[width=120mm]{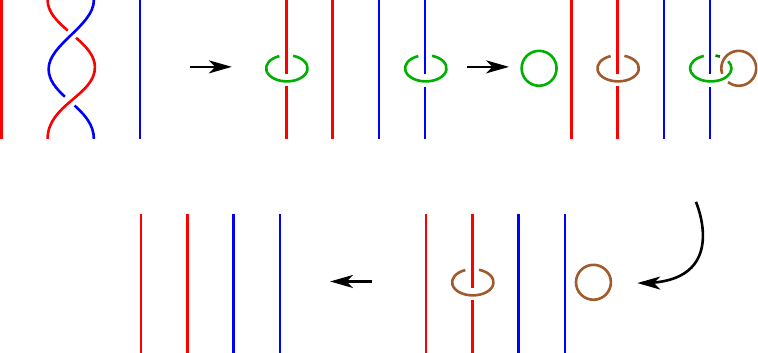}
    \vspace{.1in}
    \caption{Eliminating a clasp from $L$ when $H(S^2 \times I)$ has a dual sphere with trivial normal bundle.}
    \label{fig:frameddual}
\end{figure}

We perform a finger move as in Section~\ref{sec:claspmove} to remove the clasp at the cost of creating one pair of dual circles (see Section~\ref{sec:claspmove}) $E$ and $E'$, with $E$ a meridian of $A'$ (the dual of $A$) and $E'$ a meridian of $B'$ (the dual of $B$). The circles $E$ and $E'$ bound meridian disks that clasp those bounded by $A'$ and $B'$. We perform another finger move to remove the clasp between $A'$ and $E$, pulling $E$ off $A'$ entirely. This introduces yet another pair of dual circles $F$ and $F'$ (again, see Section~\ref{sec:claspmove}) with $F$ a meridian of $E'$ and $F'$ a meridian of $A$.
 
 Now because $G$ is a framed dual to $H(S^2 \times I)$, we may use the ambient Dehn surgery technique of Section~\ref{sec:ambient_dehn} (refer to Remark~\ref{ambientdehnremark}) to alter $H$ and remove first $E$ and $E'$, then $F$ and $F'$, completely from $L$ without otherwise altering $L$.
 

The total result is that we have removed a clasp from the disks bounded by $A$ and $B$ without otherwise altering the singular link $L$. By repeating this at every clasp, we can thus arrange for $L$ to be an unlink. We may again use the ambient Dehn surgery technique of Section~\ref{sec:ambient_dehn} to remove each pair of singular circles. The resulting map from $S^2 \times I$ to $X\times I$ is thus an embedding, so $S_0$ and $S_1$ are concordant.
\end{proof}

\subsection{Reduction to a Hopf link of singular circles} \label{sec:hopf}

In this section, we begin the proof of of Theorem~\ref{thm:main2}. We now take $S_0$ and $S_1$ to be $s$-characteristic, so any dual sphere to $H(S^2\times I)$ must have nontrivial normal bundle.  We will see how in this situation, we can still modify $H$ to reduce the number of components in its singular link $L$, eventually obtaining a type I dual pair forming a Hopf link.  In the next section, we will show that when the element of $\pi_1 X$ corresponding to this last pair represents the trivial element of $H_1(X; \mathbb{Z}/2\mathbb{Z})$, then we can further remove this Hopf link to obtain a concordance from $S_0$ to $S_1$.

\begin{proposition}\label{prop:LHopflink}
Let $S_0$, $S_1$ be homotopic 2-spheres in an orientable 4-manifold $X$ with $\fq(S_0,S_1)=0$. Let $H'$ be a singular concordance from $S_0$ to $S_1$ with $\mu(H)=0$. Suppose $S_0$ is $s$-characteristic and there exists an immersed 2-sphere in $G$ intersecting $S_0$ transversely in a single point. Then there is a singular concordance $H$ between $S_0$ and $S_1$ with $[H]=[H']\in H_3(X\times I, X\times\{0,1\};\mathbb{Z}\pi_1X)$ such that the singular link $L$ of $H$ is a type I dual pair forming a Hopf link.
\end{proposition}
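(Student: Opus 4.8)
### Proof proposal

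\textbf{Overall strategy.} The plan is to start from the singular concordance $H'$, which by hypothesis has $\mu(H')=0$, and first apply Proposition~\ref{fq_typeII} to replace it by a singular concordance with no type II circles. Since the moves used in that proof (Whitney moves, introducing a trivial type II circle, connect-summing with a $\pi_3$-element when $\mu(\pi_3 X)\neq 0$) either preserve the homology class in $H_3(X\times I, X\times\{0,1\};\mathbb{Z}\pi_1 X)$ or — in the case of the $\pi_3$-summand — can be avoided because $\mu(H')=0$ already, we may assume we have a singular concordance $H$ with $[H]=[H']$ whose singular link $L$ consists only of type I dual pairs. Push the dual sphere $G$ into $X\times I$ so that it is embedded and meets $H(S^2\times I)$ transversely once; since $S_0$ is $s$-characteristic, $G$ necessarily has nontrivial normal bundle by Observation~\ref{lem:4to5}.

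\textbf{Reducing the singular link.} Next I would run the analogue of the argument in the proof of Theorem~\ref{thm:main1} (Section~\ref{sec:framed}), but tracking the twisting that is now forced because $G$ is unframed. Using Lemma~\ref{lem:clasp}, choose immersed disks in $S^2\times I$ bounded by the singular circles of $L$ with only clasp intersections. At each clasp between circles $A$ and $B$, perform the finger moves of Section~\ref{sec:claspmove} to trade the clasp for dual pairs of type I meridian circles $E,E'$ and $F,F'$ that are unknotted and unlinked (as meridians), then use the ambient Dehn surgery move of Section~\ref{sec:ambientdehn} (see Remark~\ref{ambientdehnremark}) to remove $E,E',F,F'$. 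Because $G$ is unframed and $H(S^2\times I)$ is $s$-characteristic, removing these pairs forces adding an odd number of full twists to the circles linking the surgery curve — but we may always take this to be a single full twist, and these twists only change linking numbers modulo $2$, which will be harmless at the next stage. After clearing all clasps, $L$ is an unlink of type I dual pairs, each pair being a (possibly twisted) Hopf link of two unknots, the two components of a pair possibly linking each other and other pairs.

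\textbf{Collapsing to a single Hopf link.} Now I would repeatedly merge pairs. Given two type I dual pairs $(A,A')$ and $(B,B')$ with group elements $a,b$: if $a=b$ or $a=b^{-1}$, use a Whitney move (Section~\ref{sec:whitney}, with bands chosen framed with respect to a clean Whitney disk via Proposition~\ref{prop:criterionextend} and Lemma~\ref{lem:stong1}, which applies since all circles are null-homotopic in $S^2\times I$) to band the two active circles together and the two inactive circles together, replacing the two pairs by one pair (plus trivial type I pairs from disk intersections, which are unknotted/unlinked and can be removed by ambient Dehn surgery). If $a\neq b^{\pm1}$, first introduce (Remark~\ref{rem:fingermove}) a trivial dual pair and use a finger move / crossing change (Section~\ref{sec:claspmove}) to produce a pair with group element $ab^{-1}$ or similar, so as to set up a valid Whitney move; here the $s$-characteristic hypothesis together with $\fq=0$ is what guarantees the bookkeeping of group elements closes up without obstruction. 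Iterating, $L$ becomes a single type I dual pair $(A,A')$ of unknots. Finally, an ambient Dehn surgery (or a last Whitney move on $A,A'$ with itself) adjusts their relative linking so that $(A,A')$ is exactly a Hopf link in a ball with both images coinciding. Throughout, every move either is a regular homotopy or is an ambient Dehn surgery along a $2$-handle, so by Remark~\ref{rem:remzpi1class} and Remark~\ref{rmk:homologous} the class $[H]\in H_3(X\times I,X\times\{0,1\};\mathbb{Z}\pi_1 X)$ is preserved, giving $[H]=[H']$.

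\textbf{Main obstacle.} The delicate point is the second and third steps in combination: because $G$ is unframed, neither the clasp-removal nor the pair-merging is ``clean,'' and each ambient Dehn surgery or Whitney move can inject extra full twists or extra type I pairs, so one must argue these always come in a form that can be cancelled (via Lemma~\ref{lem:stong1}, Remark~\ref{ambientdehnremark}(2)(b), and the $\mathbb{Z}/2\oplus\mathbb{Z}/4$ framing criterion of Proposition~\ref{prop:criterionextend}\eqref{pointthree}) rather than accumulating. Equivalently, the hard part is checking that the $s$-characteristic condition plus $\mu(H)=0$ is exactly strong enough to let the reduction terminate at a single Hopf link while leaving the homology class untouched — this is precisely where Stong's framing analysis is essential, and I expect the proof to quote Lemma~\ref{lem:stong1} and Proposition~\ref{prop:criterionextend} repeatedly.
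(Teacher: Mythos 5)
There is a genuine gap, and it is located in your ``Reducing the singular link'' step. You propose to run the clasp-removal argument from the proof of Theorem~\ref{thm:main1}: trade each clasp for meridian pairs $E,E'$ and $F,F'$ and then delete all four circles by ambient Dehn surgery, asserting that the extra full twists forced by the unframed dual ``only change linking numbers modulo $2$'' and are harmless. This cannot work as stated. First, a sanity check: if every clasp could be cleared without residue, $L$ would become an unlink of type~I dual pairs with each inactive circle unlinked from everything, and repeated ambient surgery would then produce an embedding — i.e., $\fq=0$ would always imply concordance in the $s$-characteristic case, contradicting the existence of pairs with $\km\neq 0$ (Example~\ref{km_example}). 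Second, the concrete failure point: with $G$ unframed, removing $E,E'$ (say $E$ unlinked, $E'$ a meridian of $B'$ also linked by $F$) forces an odd number of full twists on the strands of $B'\cup F$ passing through $E'$ by Remark~\ref{ambientdehnremark}, which creates odd linking between $F$ and $B'$. After that, neither $F$ nor $F'$ is unlinked from the rest of $L$, so the hypothesis of the ambient surgery move fails and the second deletion cannot be performed. The mod-$2$ linking you dismiss is exactly the data that survives into $\Delta(H)$; it cannot be discarded at this stage, only reorganized.

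The paper's proof is structured precisely to avoid this: after the clasp finger moves it \emph{keeps} the meridian (``short'') circles, uses Lemma~\ref{lem:stong1} to see that a dual pair of long circles $A,A'$ links short circles with equal parity, moves short circles one at a time from $A$ to $A'$ (Figure~\ref{fig:movecircle}), and then cuts off Hopf links by clean Whitney moves (Figure~\ref{fig:cutoffcircles}) until $L$ is a split union of Hopf links of type~I circles. These Hopf links are then organized into cycles under the duality relation, cycles are shortened one link at a time (Figure~\ref{fig:cyclereduction}), and finally the resulting length-one cycles are merged pairwise into a single Hopf link (Figure~\ref{fig:combinehopf}). Your merging step also diverges here in a way that matters: the paper's merge works for two Hopf links with \emph{arbitrary} group elements $a,b$ (producing $ba$), whereas you only describe a Whitney move when $a=b^{\pm1}$ and gesture at an unspecified fix otherwise; no appeal to ``$\fq=0$ closing up the bookkeeping'' is needed or available at that point. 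To repair your argument you would need to replace the clasp-deletion step with the redistribute-and-cut-off scheme (or something equivalent that retains the linking data), and give the general merge of Hopf links with unrelated group elements.
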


\begin{proof}
As in the proof of Theorem~\ref{thm:main1} (contained in Section~\ref{sec:framed}), we use Proposition~\ref{fq_typeII} to choose an initial $H$ whose singular link $L$ consists of only type I circles, and we push $G$ into the interior of $X\times I$ so that it is embedded and intersects $H(S^2 \times I)$ transversely once. Since $H(S^2 \times I)$ is $s$-characteristic, $G$ must have nontrivial normal bundle.


Via Lemma~\ref{lem:clasp}, we choose a collection $D$ of immersed disks in $S^2 \times I$ bounded by each component of $L$ 
with the property that the intersections of $D$ are all clasp intersections.  As in subSection~\ref{sec:framed}, these clasps suggest finger moves.  We modify $H$ and then refer to the result again as $H$ throughout.  This modification will proceed in a number of steps; we summarize them here and will then give details about how to achieve each.

\begin{enumerate}
    \item\label{hopfstep1} 
    We peform clasp finger moves to $H$ until the singular link $L$ consists of an unlink of type I dual pairs, %
   together with a collection of small linking meridians to the unlink that are also all type I.  
    
    \item\label{hopfstep2} 
   Through a sequence of finger moves, ambient surgeries, and Whitney moves, we modify $H$ so that $L$ is a split union of Hopf links in disjoint 3-balls in $S^2 \times I$, with all components type I. 
   We call these Hopf links $h_1,....h_n$ and note that they are partitioned into disjoint cycles by considering the equivalence classes generated by the relation, ``$h_i\sim h_j$ if one of the components of $h_i$ is dual to one of the components of $h_j$". 
    
    \item\label{hopfstep3} We shorten the cycles of Hopf links, 
    obtaining a new $H$ whose singular link is a split union 
    of Hopf links 
    such that each Hopf link consists of one type I dual pair.
      \item\label{hopfstep4} Finally, we modify $H$ to remove all but one of these Hopf links from $L$ -- or perhaps more accurately, we merge all of the Hopf links in $L$ to form a single Hopf link. 
\end{enumerate}

Now we give more details on how to perform each of the above steps. 

\subsubsection{Step \eqref{hopfstep1}}

We perform a finger move (as in Section~\ref{sec:claspmove}) on each clasp of $D$. The result is to transform the components of $L$ into an unlink, at the cost of adding many pairs of dual type I circles that are meridians of the original components. We call the original components of $L$ ``long circles" and the new meridian circles ``short circles," so now $L$ consists of an unlink of long circles and an unlink of short circles, so that every short circle is a meridian of a long circle. Every component of $L$ is type I, and the dual of any long circle is long while the dual of any short circle is short.

\subsubsection{Step \eqref{hopfstep2}}
Let $A,A'$ be a dual pair of long circles. If there are no short circles linking $A$ and $A'$, then perform ambient surgery to remove both circles. Thus, without loss of generality, assume that there is at least one short circle linking $A$, and that there are at least as many short circles linking $A$ as there are linking $A'$ (by potentially switching the roles of $A$ and $A'$).


In Figure~\ref{fig:movecircle}, we show how we may perform a finger move and then ambient surgery to decrease the number of short circles linking $A$ by one while increasing the number of short circles linking $A'$ by one. Recall from Lemma~\ref{lem:stong1} that the number of short circles linking $A$ has the same parity as the number of short circles linking $A'$. Therefore, by repeatedly performing this move we may arrange that $A$ and $A'$ link equal numbers of short circles.


\begin{figure}
    \centering
    \labellist
    \small\hair 2pt
    \pinlabel {$A$} at 7 130
    \pinlabel {$A'$} at 77 130
    \pinlabel {\textcolor{red}{$E$}} at 15 175
    \pinlabel {\textcolor{red}{$E'$}} at 124 155
       \pinlabel {$A$} at 7 65
    \pinlabel {$A'$} at 77 65
    \pinlabel {\textcolor{red}{$E$}} at 11.5 110
    \pinlabel {\textcolor{red}{$E'$}} at 124 90
    \pinlabel {\textcolor{blue}{$F'$}} at 165 109.5
    \pinlabel {\textcolor{blue}{$F$}} at 84.5 110
     \pinlabel {$A$} at 7 0
    \pinlabel {$A'$} at 77 0
    \pinlabel {\textcolor{blue}{$F'$}} at 123.5 25
    \pinlabel {\textcolor{blue}{$F$}} at 84.5 45
    \endlabellist
    \includegraphics[width=70mm]{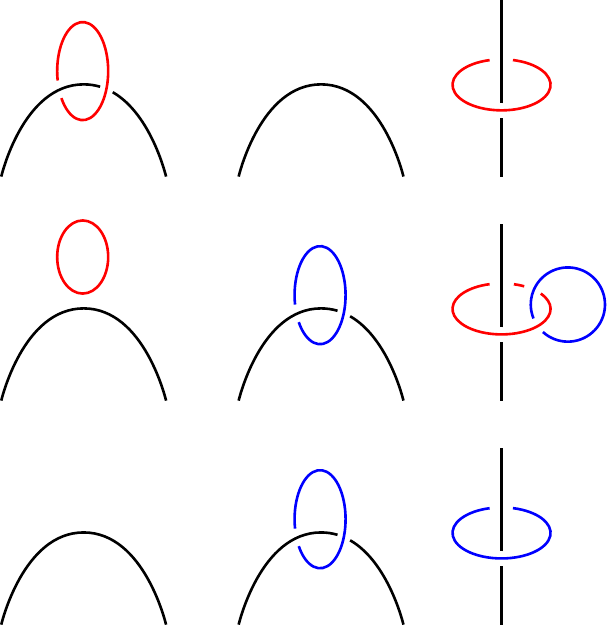}
    \caption{From top to bottom, we remove from the singular link $L$ one short circle $E$ that is a meridian of $A$ while adding another $F$ that is a meridian of $A'$. In other words, we ``move" one short circle from $A$ to $A'$. Top to middle: finger move to unclasp $E$ from $A$. Middle to bottom: ambient surgery removing $E$ and $E'$.}
    \label{fig:movecircle}
\end{figure}

Pick points $x,y \in A$ and $x',y' \in A'$ so that $H(x) = H(x')$ and $H(y)= H(y')$.  Furthermore, pick arcs $\alpha$ from $x$ to $y$ and $\alpha'$ from $x'$ to $y'$ that are contained in the disks that demonstrate that the link of long circles is an unlink, so that the arcs cut off one short circle from each of $A$,$A'$, centered about points with the same image under $H$.

By Section~\ref{sec:cleanwhitney}, we know that $\alpha,\alpha'$ can be framed to yield bands that are framed for some clean Whitney disk. consider the 
band-thickenings $b$ and $b'$ of $\alpha$ and $\alpha'$ (respectively) as in Figure~\ref{fig:cutoffcircles}. These thickening are chosen to lie inside disjoint disks $D_1$ and $D_2$ bounded by $A$ and $A'$. It is easy to verify that the conditions of Remarks~\ref{rem:checkannulus} and~\ref{rem:l1l2} are satisfied by $b,b'$, i.e., that $H(b\cup b')$ is an annulus and the bundles $l_1,l_2$ are orientable. Then by Section~\ref{sec:w2obstruction}, either $b,b'$ are framed, or they would be framed if a whole twist were added to $b$. But if we were to add a twist to $b$ and perform band surgery, we would obtain a singular link violating the conclusion of Lemma~\ref{lem:stong1}, so we conclude such a Whitney move is not possible. Thus, $b,b'$ are framed for some clean Whitney disk. We perform the Whitney move, splitting $A$ into two components $A,B$ and $A'$ into two components $A',B'$, all of which we consider long circles. The circles $B$ and $B'$ each link one short circle, forming two Hopf links, and we have decreased the number of short circles linking $A$ and $A'$ by one. We repeat until each of $A$ and $A'$ link only one short circle, forming Hopf links.


\begin{figure}
    \centering
    \includegraphics[width=70mm]{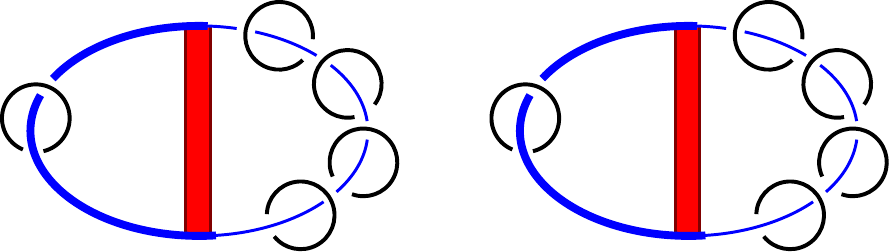}
    \caption{A dual pair $A$ and $A'$ of long circles in $L$. The bold arcs in $A$ and $A'$ have the same image under $H$ and each link one short circle. Bands contained in disks for $A$ and $A'$ disjoint from the other long circles give a framed Whitney disk exactly when the resulting pairs of long circles would satisfy Lemma~\ref{lem:stong1}.}
    \label{fig:cutoffcircles}
\end{figure}
\subsubsection{Step \eqref{hopfstep3}}
At the end of Step \eqref{hopfstep2}, the singular link $L$ consists of many Hopf links of type I circles. A {\emph{cycle}} in $L$ consists of $m$ Hopf links whose components are $(A_1,A_2')$, $(A_2,A_3')$, $\ldots$, $(A_m,A_1')$ where $A_i,A_i'$ are a dual pair. We say that such a cycle has length $m$. 

We shorten a length $m>1$ cycle as follows -- see Figure~\ref{fig:cyclereduction}. Perform a finger move to unclasp $A_1$ from $A_2'$. (If $n=2$, then $A_2=A_m'$.) This has the cost of adding two singular circles $B,B'$ that are respectively meridians of $A_2$ and $A_1'$. We perform ambient Dehn surgery to remove the pair $A_1,A_1'$, and another ambient Dehn surgery to remove the pair $A_2,A_2'$. Because $G$ has nontrivial normal bundle, in addition to removing $A_1,A_1',A_2,A_2'$, these ambient surgeries have the effect of adding a whole twist to the remaining components of $L$ that link $A_1'$ and $A_2$. Then $A_m\cup B'$ and $B\cup A_2'$ are Hopf links. Thus, $L$ now includes Hopf links of components $(B,A_3'),(A_3,A_4'),\ldots,(A_{m-1},A_m'),(A_m,B')$. This is a length $(m-1)$ cycle. By repeating this argument, we can arrange for $L$ to only consist of length 1 cycles, i.e., Hopf links each consisting of one dual pair. Moreover, as we indicate in Figure~\ref{fig:cyclereduction}, the group element associated to $B$ is $g_2g_1$, where $g_i$ is the group element associated to $A_i$. Then inductively, the group element associated to the length one cycle obtained from $(A_1,A_2')$, $(A_2,A_3')$, $\ldots$, $(A_m,A_1')$ is $g_mg_{m-1}\cdots g_3g_2g_1$.


\begin{figure}
    \centering
    \labellist
    \small\hair 2pt
    \pinlabel{\footnotesize{$A_m (g_m)$}} at 13 148.5
    \pinlabel{\footnotesize{$A_1'$}} at 40 109.5
    \pinlabel{\footnotesize{$A_1 (g_1)$}} at 67 148.5
    \pinlabel{\footnotesize{$A_2'$}} at 95 109.5
    \pinlabel{\footnotesize{$A_2 (g_2)$}} at 121 148.5
    \pinlabel{\footnotesize{$A_3'$}} at 150 109.5
    \pinlabel{\huge{$\cdots$}} at 172 113
    \pinlabel{\footnotesize{$A_{m-1} (g_{m-1})$}} at 207 148.5
    \pinlabel{\footnotesize{$A_m'$}} at 234 109.5
    
    \pinlabel{\footnotesize{\textcolor{red}{$B$}}} at 95 82
    \pinlabel{\footnotesize{\textcolor{red}{$B'$}}} at 45 56
    
     \pinlabel{\footnotesize{$A_m (g_m)$}} at 13 40
    \pinlabel{\footnotesize{\textcolor{red}{$B'$}}} at 40 1
    \pinlabel{\footnotesize{\textcolor{red}{$B (g_2g_1)$}}} at 121 40
    \pinlabel{\footnotesize{$A_3'$}} at 150 1
    \pinlabel{\huge{$\cdots$}} at 172 113
    \pinlabel{\footnotesize{$A_{m-1} (g_{m-1})$}} at 207 40
    \pinlabel{\footnotesize{$A_m'$}} at 234 1
    \endlabellist
    \vspace{.05in}
    \includegraphics[width=80mm]{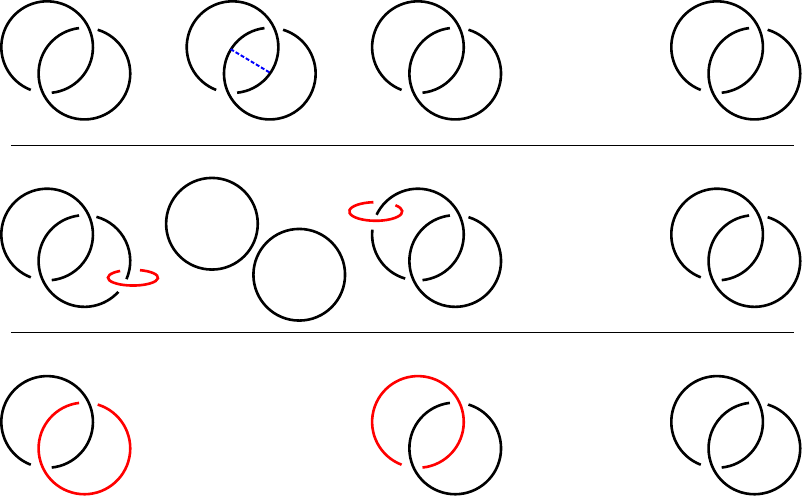}
    \caption{Given a cycle of length $m>1$, we replace it with a cycle of length $m-1$. Top row: the length $m$ cycle. From top to bottom, we perform a finger move to unclasp $A_1$ and $A_2'$, then we perform ambient Dehn surgery to remove $A_1,A_1'$ and $A_2,A_2'$. In parentheses next to each active circle, we indicate the associated fundamental group element.}
    \label{fig:cyclereduction}
\end{figure}


\subsubsection{Step \eqref{hopfstep4}} 
\begin{figure}
\centering
   \scalebox{.9}{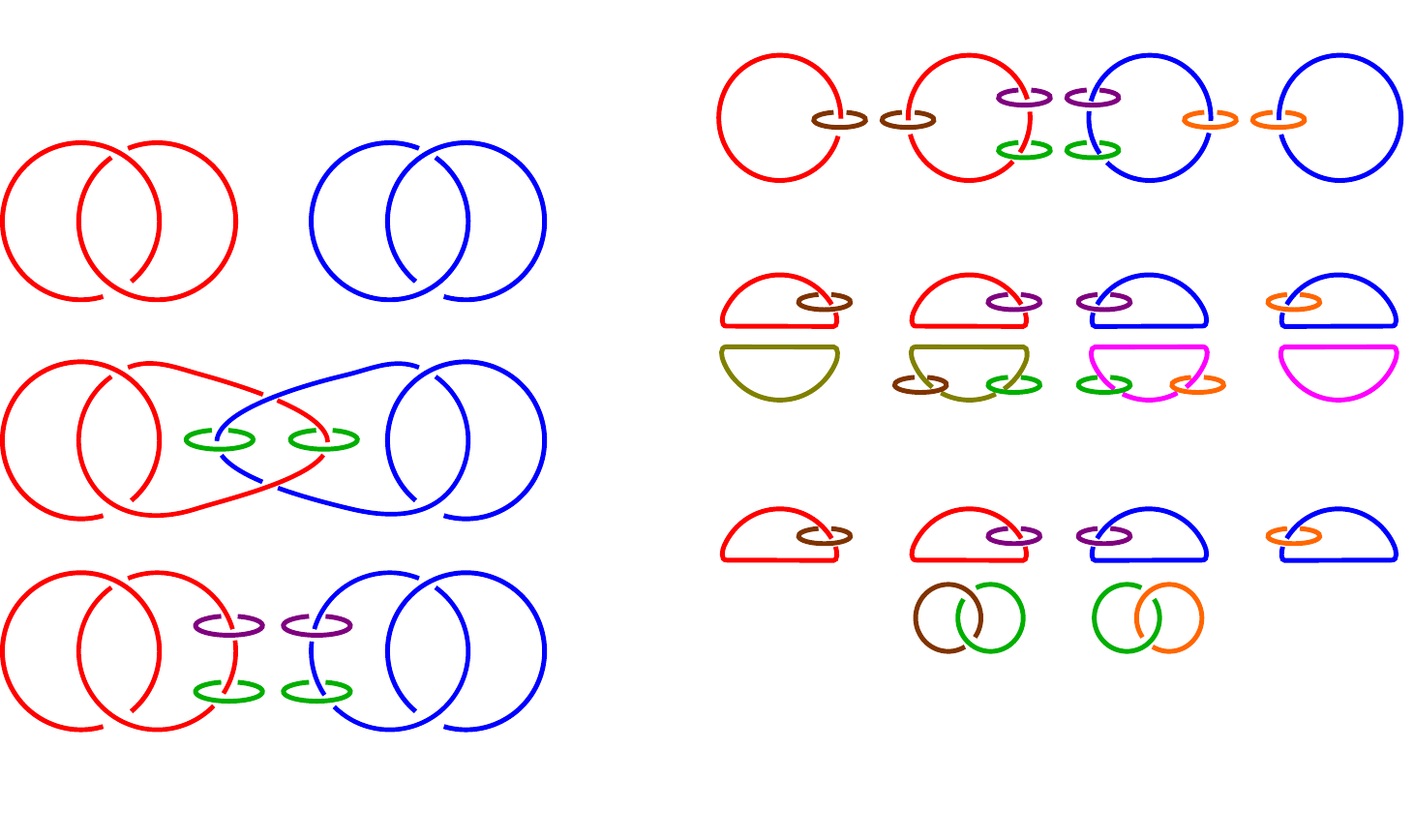}
    \caption{In the top left, we show two dual pairs of type I circles in $L$, each of which forms a Hopf link. Following the arrows, we perform moves to $H$ to replace these two Hopf links with a single Hopf link.}
    \label{fig:combinehopf}
\end{figure}
In Figure~\ref{fig:combinehopf}, we illustrate how to perform a sequence of finger moves, a Whitney move, ambient Dehn surgery, and the cycle shortening of step \eqref{hopfstep3} to replace two split Hopf links in $L$ with one Hopf link.

\begin{remark}\label{rem:combinehopf}
The Hopf link obtained in Figure~\ref{fig:combinehopf} may be taken to have associated group element $ba$, where $a$ and $b$ are the group elements associated to $A$ and $B$, respectively.  Suppose $A$ and $B$ are associated to elements $a,b\in\pi_1(X)$. Then $C$ and $D$ are each naturally associated to $c=ba$, but by exchanging the roles of $D$ and $D'$ the group element $d$ associated to $D$ is $a^{-1}b^{-1}$. (Recall that the group element associated to a type I self-intersection is defined only up to inverse due to the choice of active preimage.) The circles $E$ and $F$ are both associated to $e=f=1$. Then the final active circle $G$ is associated to the product $f\cdot e\cdot d\cdot c\cdot b\cdot a=1\cdot 1\cdot a^{-1}b^{-1}\cdot ba\cdot b\cdot a =ba$.
\end{remark}

We repeat until $L$ consists of a single Hopf link, completing the proof of Proposition~\ref{prop:LHopflink}.
\end{proof}

\begin{remark}\label{rem:hopfpi1}
In the proof of Proposition~\ref{prop:LHopflink}, the element of $\pi_1 X$ associated to the final Hopf link is a product of all the group elements associated to the active singular circles at the end of Step \eqref{hopfstep2} (at which time $L$ is a split union of Hopf links). The terms of this product may be taken to be in any order, according to a choice of in which order we merge the various Hopf links together in Steps \eqref{hopfstep3} and \eqref{hopfstep4}.
\end{remark}

\begin{remark}\label{rmk:reduce_to_hopf_no_homology_change}
Note that as in Remark~\ref{rmk:hypos_fq}, if $\mu(\pi_3 X) = 0$, then such a map $H$ as in Proposition~\ref{prop:LHopflink} exists in any homology class in $H_3(X \times I, X \times \{0,1\});\mathbb{Z}\pi_1X)$ that can be represented by a singular concordance from $S_0$ to $S_1$ (since we automatically have $\mu(H)=0$). 
\end{remark}

\begin{lemma}\label{lem:addHopfnullhomologous}
Let $H$ be a singular concordance between the spheres $S_0$ and $S_1$ such that $H(S^2 \times I)$ has a dual sphere $G$. Let $g$ be any element of $\pi_1 X$ representing the trivial element of $H_1(X;\mathbb{Z}/2\mathbb{Z})$.  We can modify $H$ to add a Hopf link to to the link of singular circles $L$ of $H$, split from the rest of $L$, consisting of a dual type I pair with active circle associated to $g$.
\end{lemma}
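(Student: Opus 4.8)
The plan is to produce the desired split Hopf link by first creating \emph{some} Hopf link with the correct associated group element using an elementary finger-move construction, and then, if necessary, combining it with an extra trivially-associated Hopf link via the merging procedure of Step~\eqref{hopfstep4} to arrange that the active circle's group element is exactly $g$ rather than $g$ times some product of trivial elements. The starting observation is that $g$ represents the trivial element of $H_1(X;\mathbb{Z}/2\mathbb{Z})$, so $g$ is a product of commutators and squares in $\pi_1 X$; alternatively, since what we ultimately need is only control of the $H_1(X;\mathbb{Z}/2\mathbb{Z})$-class in applications, we actually want more: we want the active circle of the new split Hopf link to be \emph{associated to the specific element $g$} (as Lemma~\ref{lem:addHopfnullhomologous} states), not merely to an element with trivial $\mathbb{Z}/2\mathbb{Z}$-abelianization. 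I will address both levels of precision below.

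First I would build a single split Hopf link with active circle associated to an arbitrary prescribed element. Represent $g$ by a loop $\delta$ in $X$, push it into $X\times I$, and perturb so that $\delta$ meets $H(S^2\times I)$ in its interior. Using Remark~\ref{rem:fingermove}: choose an arc from a point of $H(S^2\times I)$ along a push-off of $\delta$ back to another point of $H(S^2\times I)$, forming (with whiskers) a loop representing $g\in\pi_1 X$, and perform the associated finger move. Its effect on $L$ is to add a dual type I pair $A,A'$ that are unknots, unlinked from each other and from all other components of $L$, with associated group element $g$. At this stage, though, $A\cup A'$ is a \emph{split unknot pair}, not a Hopf link. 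To upgrade it to a Hopf link I would perform a clasp finger move (Section~\ref{sec:claspmove}): apply the move of Figure~\ref{fig:claspmove} or, more directly, perform a finger move along a small arc creating a crossing change between $A$ and $A'$ so that they become linked once --- this introduces a further dual type I pair $E,E'$ of meridians, with $E,E'$ associated to $g\cdot g^{-1}=1$. Then $E,E'$ is an unlinked split unknot pair, which can be removed by ambient Dehn surgery (Section~\ref{sec:ambientdehn}, Remark~\ref{ambientdehnremark}); because $G$ may have nontrivial normal bundle this surgery may add a full twist to circles linking the removed pair, but since $E,E'$ are split from everything, nothing is affected. We are left with $A\cup A'$ a split Hopf link with active circle associated to $g$, and the rest of $L$ unchanged. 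All these moves fix $H$ on $\partial(S^2\times I)$ and preserve $\pi_1$-triviality and the homology class $[H]$ (Remark~\ref{rem:remzpi1class}, Remark~\ref{rmk:homologous}).

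The remaining issue, and the one subtlety to watch, is whether the clasp/crossing-change construction genuinely yields a Hopf link split from the rest of $L$ with active circle associated to exactly $g$ (and not $g^{\pm 1}$ times something). The group element bookkeeping is as in Remark~\ref{rem:combinehopf}: since $E,E'$ are associated to the trivial element, removing them by ambient surgery does not alter the group element associated to $A$, which remains $g$. The orientation/framing conditions needed to carry out the finger moves and the ambient surgery are automatic here because every new circle introduced bounds an obvious embedded disk split from the rest of $L$, so Lemma~\ref{lem:stong1} and the framed-band criteria of Proposition~\ref{prop:criterionextend} are trivially satisfied. If one prefers to route the construction through Step~\eqref{hopfstep4}: build two split Hopf links, one with active circle associated to $g$ and one associated to $1$ (the latter as in Figure~\ref{fig:gettypeiicircle}/Section~\ref{sec:maketrivialtypeII} suitably adapted to produce a type I Hopf pair), then merge them by Figure~\ref{fig:combinehopf} to get a single split Hopf link associated to $1\cdot g = g$; this has the mild advantage of reusing already-verified moves verbatim. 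The main obstacle, such as it is, is purely expository --- namely confirming that the elementary finger-move of Remark~\ref{rem:fingermove} followed by a single crossing change really does produce a \emph{clean} split Hopf link, i.e., that the auxiliary pair $E,E'$ it creates is unknotted and unlinked so the ambient Dehn surgery of Remark~\ref{ambientdehnremark}(2)(a)–(b) applies without side effects; this is immediate from the local model but should be stated explicitly.
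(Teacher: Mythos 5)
Your construction has a genuine gap, and it is located exactly where the hypothesis $\epsilon_g=0$ ought to enter. Your main route never uses that hypothesis: you create a split unknotted type~I pair $A,A'$ associated to $g$ by a finger move, link them by a crossing-change finger move, and then claim the auxiliary meridian pair $E,E'$ is ``split from everything'' and can be removed by ambient Dehn surgery ``without side effects.'' That claim is false. As Figure~\ref{fig:claspmove} shows, the crossing-change finger move between two circles produces meridians of their \emph{duals}: here $E$ is a meridian of $A'$ and $E'$ is a meridian of $A$, so neither member of the pair is unlinked from the rest of $L$, and the hypothesis of the ambient surgery move (Remark~\ref{ambientdehnremark}: one circle of the pair must be unknotted \emph{and unlinked from all other singular circles}) is not satisfied. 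Freeing one of them requires further clasp finger moves, which spawn further meridians of $A$ and $A'$; and in the case of interest for this lemma ($H(S^2\times I)$ $s$-characteristic, so that $G$ necessarily has nontrivial normal bundle), each ambient surgery in that cascade adds an odd number of full twists to the strands passing through the removed circle, re-linking the auxiliary circles with $A$ and $A'$. The bookkeeping does not close up. There is also a conceptual reason it cannot close up: if your procedure worked for arbitrary $g$, it would change $\Delta(H)$ by $\epsilon_g$ (the new active circle has odd linking with $L$ minus itself), contradicting the invariance of $\Delta$ under all of these moves; equivalently, it would let one cancel the final Hopf link in Proposition~\ref{prop:trivialremove} for any group element and so trivialize $\km$. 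Your fallback route is circular, since it begins by assuming you can ``build a split Hopf link with active circle associated to $g$,'' which is the statement being proved (and Section~\ref{sec:maketrivialtypeII} produces a type~II circle, not a type~I Hopf pair).

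The paper's proof is organized precisely to respect this parity obstruction. A single finger move along a generator $g_i$ produces a split \emph{unlinked} pair associated to $g_i$, and the only way to convert unlinked pairs into Hopf links without uncontrolled side effects is a clean Whitney move along framed bands, which (by the framing criterion of Proposition~\ref{prop:criterionextend} together with Lemma~\ref{lem:stong1}) necessarily produces \emph{two} Hopf links associated to $g_i$ at once (Figure~\ref{fig:addtwohopfs}). Hence Hopf links can only be created in pairs per generator, and Lemma~\ref{lem:Z/2_abelianization} is invoked to write $g$ as a word in which each generator appears with even total multiplicity; the resulting even collection of Hopf links is then merged into a single one associated to $g$ using the cycle-merging of Step~\eqref{hopfstep4}. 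You should rebuild your argument along these lines rather than via a crossing change between a circle and its own dual.
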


In order to prove Lemma~\ref{lem:addHopfnullhomologous}, we will make use of the following group-theoretic lemma.

\begin{lemma} \label{lem:Z/2_abelianization}
Let $V$ be a path-connected topological space and $\mathcal{G}$ a finite set of generators for $\pi_1 V$.  The kernel of the map $\pi_1 V\to H_1(V; \mathbb{Z}/2\mathbb{Z})$ contains exactly the elements of $\pi_1 V$ that can be represented by a word $w$ in the alphabet $\mathcal{G} \cup \mathcal{G}^{-1}$ (where $\mathcal{G}^{-1}$ denotes the set of inverses of elements of $\mathcal{G}$) where the total number of times $a$ appears in $w$ is equal to the number of times $a^{-1}$ appears in $w$ modulo 2 for each $a\in\mathcal{G}$.
\end{lemma}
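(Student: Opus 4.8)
The plan is to recall the standard description of $H_1(V;\mathbb{Z}/2\mathbb{Z})$ as the abelianization of $\pi_1 V$ reduced mod $2$, and then unwind what this says in terms of words in the generating set $\mathcal{G}$. First I would fix a basepoint and recall that the Hurewicz map gives $H_1(V;\mathbb{Z})\cong\pi_1V/[\pi_1V,\pi_1V]$, and by the universal coefficient theorem (or directly) $H_1(V;\mathbb{Z}/2\mathbb{Z})\cong H_1(V;\mathbb{Z})\otimes\mathbb{Z}/2\mathbb{Z}$, so that the composite $\pi_1V\to H_1(V;\mathbb{Z}/2\mathbb{Z})$ is exactly the quotient of $\pi_1 V$ by the normal subgroup $N$ generated by all commutators and all squares. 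Thus the content of the lemma is: the kernel $N$ consists precisely of those elements represented by a word $w$ in $\mathcal{G}\cup\mathcal{G}^{-1}$ in which, for every $a\in\mathcal{G}$, the number of occurrences of $a$ and the number of occurrences of $a^{-1}$ have the same parity.

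The key step is to identify $\pi_1V/N$ with the $\mathbb{F}_2$-vector space $\mathbb{F}_2^{(\mathcal{G})}$ having one basis vector $e_a$ for each $a\in\mathcal{G}$ (here using that $\mathcal{G}$ generates, so the free abelian mod-$2$ group on $\mathcal{G}$ surjects onto $H_1(V;\mathbb{Z}/2\mathbb{Z})$; actually for the kernel statement I only need the surjection $F(\mathcal{G})\to\pi_1V$ from the free group and the induced surjection $\mathbb{F}_2^{(\mathcal{G})}\to\pi_1V/N$). Given a word $w$ in the alphabet $\mathcal{G}\cup\mathcal{G}^{-1}$, define $\phi(w)\in\mathbb{F}_2^{(\mathcal{G})}$ to be $\sum_{a\in\mathcal{G}} c_a(w)\, e_a$, where $c_a(w)\in\mathbb{Z}/2\mathbb{Z}$ is the total number of occurrences of $a$ plus the number of occurrences of $a^{-1}$, reduced mod $2$. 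One checks $\phi$ is well defined on $\pi_1 V$ modulo $N$: it is visibly unchanged by inserting or deleting $aa^{-1}$ or $a^{-1}a$ (free reduction), by swapping adjacent letters (since the target is abelian this has no effect on the $c_a$), and by inserting $a^2$ or $a^{-2}$ (which changes some $c_a$ by $2\equiv 0$); since any relation in $\pi_1V$ followed by abelianizing and killing squares is built from these moves together with the defining relations of $\pi_1V$ — and the latter, being relations, map to $0$ in $H_1$ — the map $\phi$ factors through $\pi_1V/N=H_1(V;\mathbb{Z}/2\mathbb{Z})$. Conversely, the assignment $e_a\mapsto [a]$ gives a map $\mathbb{F}_2^{(\mathcal{G})}\to H_1(V;\mathbb{Z}/2\mathbb{Z})$ which is inverse to $\phi$ on the generators, exhibiting $H_1(V;\mathbb{Z}/2\mathbb{Z})$ as a quotient of $\mathbb{F}_2^{(\mathcal{G})}$ with $\phi$ the quotient map.

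The lemma then follows: an element $g\in\pi_1V$ lies in the kernel of $\pi_1V\to H_1(V;\mathbb{Z}/2\mathbb{Z})$ iff $\phi(w)=0$ for any (equivalently, every) word $w$ representing $g$, i.e.\ iff $c_a(w)=0$ in $\mathbb{Z}/2\mathbb{Z}$ for all $a\in\mathcal{G}$, which is exactly the stated balancing condition; and conversely if $g$ is represented by some word $w$ satisfying the balancing condition then $\phi(w)=0$, so $g\in\ker$. I expect the only subtlety — the ``main obstacle'' such as it is — to be stating the well-definedness of $\phi$ cleanly without belaboring the van Kampen-style bookkeeping: the point to get right is that the defining relators of $\pi_1 V$ automatically satisfy the balancing condition because each such relator maps to $0\in H_1(V;\mathbb{Z})$, hence to $0\in H_1(V;\mathbb{Z}/2\mathbb{Z})$, so no constraint beyond ``abelian, exponent $2$'' is imposed. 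Everything else is routine.
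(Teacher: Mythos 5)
Your reduction of the statement to group theory is fine: the map $\pi_1 V\to H_1(V;\mathbb{Z}/2\mathbb{Z})$ is indeed the quotient by the subgroup $N$ generated by commutators and squares, and the easy direction (a balanced word maps to $0$ because the target is abelian of exponent $2$) matches the paper. The gap is in the hard direction, where you claim that $\phi$ descends to $\pi_1 V/N$ with values in $\mathbb{F}_2^{(\mathcal{G})}$. It does not. A defining relator $r$ of $\pi_1 V$ maps to $0$ in $H_1(V;\mathbb{Z}/2\mathbb{Z})$, but that only tells you $\psi(\phi(r))=0$, where $\psi:\mathbb{F}_2^{(\mathcal{G})}\to H_1(V;\mathbb{Z}/2\mathbb{Z})$ is the map $e_a\mapsto[a]$; since $\psi$ is merely surjective and can have a large kernel (the generators need not be independent in $H_1$), this does not force $\phi(r)=0$. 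Concretely, take $\pi_1 V=\mathbb{Z}/2\mathbb{Z}=\langle a,b\mid ab^{-1},\,a^2\rangle$ with $\mathcal{G}=\{a,b\}$: the identity element is represented both by the empty word (balanced) and by $ab^{-1}$ (not balanced), so $\phi$ is not constant on words representing the same element, $\psi$ has no inverse, and your parenthetical ``(equivalently, every)'' is false. The lemma is an existence statement precisely because of this.

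Consequently the hard direction still needs an actual construction of a balanced word for each kernel element, and your argument does not supply one. The paper's route: from the Bockstein sequence (or directly from $H_1(V;\mathbb{Z}/2\mathbb{Z})\cong\pi_1V^{\mathrm{ab}}\otimes\mathbb{Z}/2\mathbb{Z}$), a kernel element $h$ satisfies $[h]=2x$ in $H_1(V;\mathbb{Z})$; choosing $h_0$ abelianizing to $x$, the element $hh_0^{-2}$ is a product of commutators, and one writes $h=(hh_0^{-2})\,h_0^{2}$. A commutator word $xyx^{-1}y^{-1}$ has each letter $a$ and its inverse appearing equally often, and a square $w_0^2$ has each appearing an even number of times, so the concatenated word is balanced. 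Your first paragraph already identifies $N$ correctly, so the shortest repair is to replace the $\phi$ argument by the observation that every element of $N$ is a product of commutators and squares, each of which visibly admits a balanced word.
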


\begin{proof}
Since $H_1(V; \mathbb{Z}/2\mathbb{Z})$ is abelian with every nontrivial element 2-torsion, any word in $\mathcal{G}\cup\mathcal{G}^{-1}$ with the property that every letter and its inverse appear with the same parity must be contained in $\ker(\pi_1(V)\to  H_1(V;\mathbb{Z}/2\mathbb{Z}))$.

Now suppose $h\in\ker(\pi_1(V)\to  H_1(V;\mathbb{Z}/2\mathbb{Z})$. 
The coefficient exact sequence 
$$
0 \to \mathbb{Z} \xrightarrow{2} \mathbb{Z} \to \mathbb{Z}/2\mathbb{Z} \to 0
$$
gives rise to the Bockstein exact sequence which in particular shows that the kernel of the map $H_1(V; \mathbb{Z}) \to H_1(V; \mathbb{Z}/2\mathbb{Z})$ is equal to the image of the map $H_1(V; \mathbb{Z}) \xrightarrow{2} H_1(V; \mathbb{Z}/2\mathbb{Z})$.  Since $h$ is in the kernel of 
$\pi_1 V \to H_1(V; \mathbb{Z}) \to H_1(V; \mathbb{Z}/2\mathbb{Z})$, 
we have
\begin{align*}
    \pi_1 V &\to H_1(V; \mathbb{Z}) \\
          h &\mapsto 2x
\end{align*}
for some $x \in H_1(V; \mathbb{Z})$.  Let $h_0\in \pi_1(V)$ with $h_0$ abelianizing to $x$. Then $hh_0^{-2}$ is the kernel of the abelianization map.  Therefore, we can write $hh_0^{-2}$ as the product of commutators, each of which contains can be written as a word $w'$ in $\mathcal{G}\cup\mathcal{G}^{-1}$ where $a$ and $a^{-1}$ appear an equal number of times for each $a\in\mathcal{G}$. Let $w_0$ be a word in $\mathcal{G}\cup\mathcal{G}^{-1}$ representing $h_0$. Then $w:=w'w_0^2$ is the desired word representing $h$.
\end{proof}

We now prove Lemma~\ref{lem:addHopfnullhomologous}.

\begin{proof}[Proof of Lemma~\ref{lem:addHopfnullhomologous}]
Let $g_1,\ldots,g_n$ be generators for $\pi_1 X$. 
By Lemma~\ref{lem:Z/2_abelianization}, there is a word $w$ in $\{g_i,g_i^{-1}|1\le i\le n\}$ representing $g$ such that for each $i$, $g_i$ and $g_i^{-1}$ appear in $w$ with the same parity. Let $n_i^+$ and $n_i^-$ be the number of times $g_i$ and $g_i^{-1}$ (respectively) appear in $w$, so $n_i^++n_i^-$ is even for each $i$.

\begin{figure}
    \centering
    \includegraphics[width=30mm]{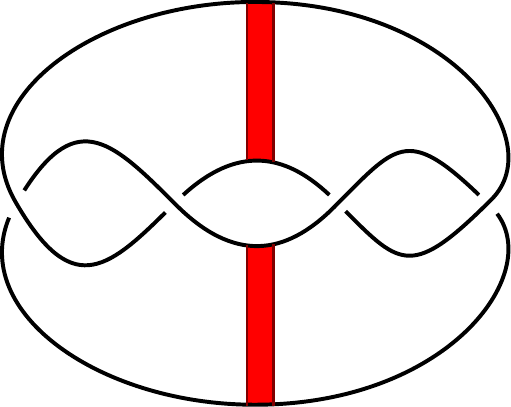}
    \caption{We perform a finger move along an arc representing group element $g_i$ to introduce an unlinked pair of type I circles in $L$ representing $g_i$ Then we perform a finger move along bands as shown to replace this unlink with two Hopf links, each of which is a dual pair corresponding to $g_i$.}
    \label{fig:addtwohopfs}
\end{figure}
In Figure~\ref{fig:addtwohopfs}, we show how to add two split Hopf links of dual type I pairs to $L$ with both active circles associated to $g_i$. 
In words, we introduce a split 2-component unlink $B\sqcup B'$ to $L$ consisting of a type I pair associated to $g_i$ via a finger move. We draw a pair of bands $b_1,b_2$ attached to $B$ and $B'$ in Figure~\ref{fig:addtwohopfs} with the property that band surgery to $B\sqcup B'$ along $b_1$ and $b_2$ yields two Hopf links split from the rest of $L$. These bands are framed for some clean Whitney disk by the same argument we used in part~\ref{hopfstep2} of the proof of Proposition~\ref{prop:LHopflink}: we easily check that $b_1,b_2$ satisfy the orientability conditions of Remarks~\ref{rem:checkannulus} and~\ref{rem:l1l2}, implying that either these bands are framed or they would be framed if a whole twist were added to $b_1$, but adding a whole twist to $b_1$ and performing the band surgery would yield a link violating the conclusion of Lemma~\ref{lem:stong1}. We then modify $H$ by performing a clean Whitney move along $b_1$ and $b_2$, yielding two Hopf links both associated to $g_i$. 

Perform the above move $|w|/2$ times, where $|w|$ represents the length of $w$ and we vary $i$ so that for each $i$ we introduce $n_i^++n_i^-$ Hopf links with active circle corresponding to $g_i$. In $n_i^-$ of these dual pairs, reverse the roles of the active and inactive circles so now these Hopf links have active circle associated to $g_i^{-1}$.

Finally, merge these $|w|$ Hopf links together to form one Hopf link split from the rest of $L$ 
as in the proof of Proposition~\ref{prop:LHopflink} (part~\ref{hopfstep4}). Choose the order in which the Hopf links are merged to ensure that the active circle of the resulting Hopf link is associated to the element of $\pi_1 X$ represented by $w$, i.e., $g$ 
(see Remarks~\ref{rem:combinehopf} and~\ref{rem:hopfpi1}).
\end{proof}

Lemma~\ref{lem:addHopfnullhomologous} lets us further simplify $H$ in Proposition~\ref{prop:LHopflink} in the case that the single active circle in $L$ is associated to an element of $\pi_1 X$ representing the trivial element of $H_1(X;\mathbb{Z}/2\mathbb{Z})$.

\begin{proposition}\label{prop:trivialremove}
Let $S_0, S_1, G, H$ be as in Proposition~\ref{prop:LHopflink} where the singular link $L$ is a Hopf link of type I circles with corresponding group element $g\in\pi_1 X$. Suppose $g$ represents the trivial element of $H_1(X;\mathbb{Z}/2\mathbb{Z})$. Then $S_0$ and $S_1$ are concordant.
\end{proposition}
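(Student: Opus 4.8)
The plan is to use Lemma~\ref{lem:addHopfnullhomologous} to ``cancel'' the single Hopf link in $L$ against a newly-introduced oppositely-oriented copy, and then to remove the resulting geometrically split unlink of type I circles via ambient Dehn surgery.

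More precisely, I would proceed as follows. By hypothesis $H$ is a singular concordance from $S_0$ to $S_1$ whose singular link $L$ consists of a single type I dual pair $A,A'$ forming a Hopf link in a ball $B\subset S^2\times I$, with associated group element $g$ representing the trivial element of $H_1(X;\mathbb{Z}/2\mathbb{Z})$. Since $g$ is trivial in $H_1(X;\mathbb{Z}/2\mathbb{Z})$, Lemma~\ref{lem:addHopfnullhomologous} applies (using the dual sphere $G$): we may modify $H$ to add to $L$ a second Hopf link $\widehat A,\widehat A'$, split from the first, again a type I dual pair with active circle associated to $g$. Now, following the merging procedure from Step~\eqref{hopfstep4} of the proof of Proposition~\ref{prop:LHopflink} (and using Remarks~\ref{rem:combinehopf} and~\ref{rem:hopfpi1}), we can instead choose to combine these two Hopf links in the order that produces a single Hopf link with associated group element $g\cdot g^{-1}=1$ --- here we exploit that the group element of a type I self-intersection is only defined up to inverse, so by swapping active and inactive circles on the second pair we may treat its contribution as $g^{-1}$. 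The upshot is that we may assume $L$ is a single Hopf link of a type I dual pair $A,A'$ with \emph{trivial} associated group element.

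Once the group element is trivial, I want to trade the Hopf-linked pair for an unlinked pair. The cleanest route is a finger move (the clasp-type finger move of Section~\ref{sec:claspmove}, or more directly the crossing-change move): perform a finger move along a framed arc $\gamma$ between $A$ and $A'$ that undoes the clasp of the Hopf link, turning $A\cup A'$ into an unlink but at the cost of introducing a new dual pair $E,E'$ of type I meridian circles. Since $A,A'$ had trivial group element, the newly introduced circles $E,E'$ are associated to $ab^{-1}$ where $a,b$ are the group elements of the two endpoints, both trivial here, so $E,E'$ also have trivial group element; moreover $E,E'$ are small unknotted unlinked meridians. We may then remove $E$ and $E'$ by ambient Dehn surgery as in Section~\ref{sec:ambient_dehn} (Remark~\ref{ambientdehnremark}); since the dual sphere $G$ has nontrivial normal bundle this adds some odd number of full twists to circles linking one of the removed pair --- but after the finger move we can arrange $E,E'$ to link nothing else, or absorb the twists into $A,A'$ and note these are now unknotted and unlinked from everything, so the twists are on an unlink and are immaterial. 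Finally, with $L$ an unlink of one type I dual pair $A,A'$ that is unknotted and split from everything (there is nothing else), apply ambient Dehn surgery once more to remove $A,A'$ entirely. The resulting immersion $S^2\times I\to X\times I$ has empty singular link, i.e.\ is an embedding, and it restricts to $S_0$ and $S_1$ on the two ends, so $S_0$ and $S_1$ are concordant.

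The main obstacle I anticipate is bookkeeping the framings and the parity of twists introduced by the ambient Dehn surgeries when $G$ has nontrivial normal bundle: one must verify that every pair of type I circles being removed really does satisfy the hypotheses of Remark~\ref{ambientdehnremark} (unknotted, with the inactive circle split from the rest), that the finger-move arc can be chosen framed so that the crossing change is realized, and that the leftover odd twists always land on components that are themselves unknots unlinked from everything and hence cost nothing --- invoking Lemma~\ref{lem:stong1} where needed to control linking parities. A secondary subtlety is making sure that merging the two Hopf links in the order that cancels the group elements is legitimate, i.e.\ that the choice of active/inactive labelings giving $g$ and $g^{-1}$ is genuinely free; this is exactly the content of Remarks~\ref{rem:combinehopf} and~\ref{rem:hopfpi1}, so it should go through, but it is the step to state carefully.
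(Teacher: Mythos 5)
The first half of your argument---adding a second Hopf link via Lemma~\ref{lem:addHopfnullhomologous} and merging so that the single remaining Hopf link has trivial associated group element---is exactly the paper's first step. The second half has a genuine gap. A finger move cannot ``unclasp'' the Hopf link $A\cup A'$ for free: by construction (Section~\ref{sec:claspmove}, Figure~\ref{fig:claspmove}) the crossing change necessarily introduces a dual pair $E,E'$ with $E$ a meridian of $A'$ and $E'$ a meridian of $A$. So after the move the singular link is the two Hopf links $(A,E')$ and $(E,A')$, i.e.\ a length-$2$ cycle in the sense of Step~(3) of Proposition~\ref{prop:LHopflink}; cycle reduction would simply collapse this back to a single Hopf link, so you have shuffled the intersections rather than removed any. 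Your claim that ``we can arrange $E,E'$ to link nothing else'' is false---the linking with $A$ and $A'$ is intrinsic to the finger move---and consequently the pair $E,E'$ does not satisfy the hypothesis of the ambient Dehn surgery move (Remark~\ref{ambientdehnremark} requires one circle of the pair to be split from the rest of $L$). Freeing $E$ requires yet another finger move creating yet another pair, and in the $s$-characteristic case each subsequent ambient surgery inflicts an odd number of full twists on the circles linking the non-split member, re-linking components; your sketch does not control this.

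The underlying point you are missing is that intersections are genuinely removed only by a \emph{clean, correctly framed Whitney move}, and for a single trivial Hopf link the required Whitney disk faces the $\mathbb{Z}/2\mathbb{Z}\oplus\mathbb{Z}/4\mathbb{Z}$ framing criterion of Proposition~\ref{prop:criterionextend}(3), constrained by Lemma~\ref{lem:stong1}. The paper's proof therefore applies Lemma~\ref{lem:addHopfnullhomologous} a \emph{second} time to produce a split union of two Hopf links, each with trivial group element, and then runs the half-twist reduction of Figure~\ref{fig:removetwohopfs} on a pair of bands joining the two Hopf links until both bands are untwisted; only then does a clean Whitney move convert $L$ into a $2$-component unlink of a dual type~I pair, which a single ambient surgery removes. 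You should replace your finger-move step with this doubling-plus-Whitney-move argument.
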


\begin{proof}

Using Lemma~\ref{lem:addHopfnullhomologous}, we modify $H$ to add a second Hopf link to $L$ associated to $g^{-1}$. Merge these two Hopf links as in Figure~\ref{fig:combinehopf} so that $L$ is a single Hopf link consisting of a dual type I pair with active circle associated to $g^{-1}g=1$.

Again apply Lemma~\ref{lem:addHopfnullhomologous} to further modify $H$ to add a second Hopf link to $L$ with active circle associated to the trivial element of $\pi_1 X$. Now $L$ is a split union of two Hopf links, each consisting of a dual type I pair with active circle associated to $1\in\pi_1 X$.

Consider the framed bands $b_1$ and $b_2$ attached to $L$ pictured in the top left of Figure~\ref{fig:removetwohopfs}. We indicate numbers $m,n$ of half-twists in $b_1, b_2$ (we can take both $m,n\in\{0,1\}$ but this is not important for this discussion). In the top and middle row of Figure~\ref{fig:removetwohopfs}, we show how to perform finger moves, clean Whitney moves, and ambient surgeries to $H$ to reduce $n$ by one. Using symmetry, we can thus performing this operation until $m=n=0$. Now perform a clean Whitney move along $b_1,b_2$, so that the singular link $L$ becomes a 2-component unlink which we can remove by ambient surgery as illustrated in the bottom row of Figure~\ref{fig:removetwohopfs}. The map $H$ is now an embedding, so $S_0$ and $S_1$ are concordant.
\end{proof}

\begin{figure}
\labellist
\small\hair 2pt
\pinlabel{$m$} at 9 535
\pinlabel{$n$} at 93 535
\pinlabel{$m$} at 212 535
\pinlabel{$n$} at 296 535
\pinlabel{$m$} at 416 535
\pinlabel{$n$} at 500 535
\pinlabel{$m$} at 9 304
\pinlabel{$n$} at 93 304
\pinlabel{$m$} at 212 304
\pinlabel{$n$} at 296 304
\pinlabel{$m$} at 416 304
\pinlabel{$n-1$} at 500 304
\pinlabel{finger} at 145 545
\pinlabel{add} at 372 560
\pinlabel{bands} at 372 545
\pinlabel{Whitney} at 250 427
\pinlabel{surgery} at 172 330
\pinlabel{2$\times$} at 172 345
\pinlabel{isotopy} at 355 330
\pinlabel{Whitney} at 152 100
\pinlabel{isotopy} at 355 100
\endlabellist
    \centering
    \includegraphics[width=\textwidth]{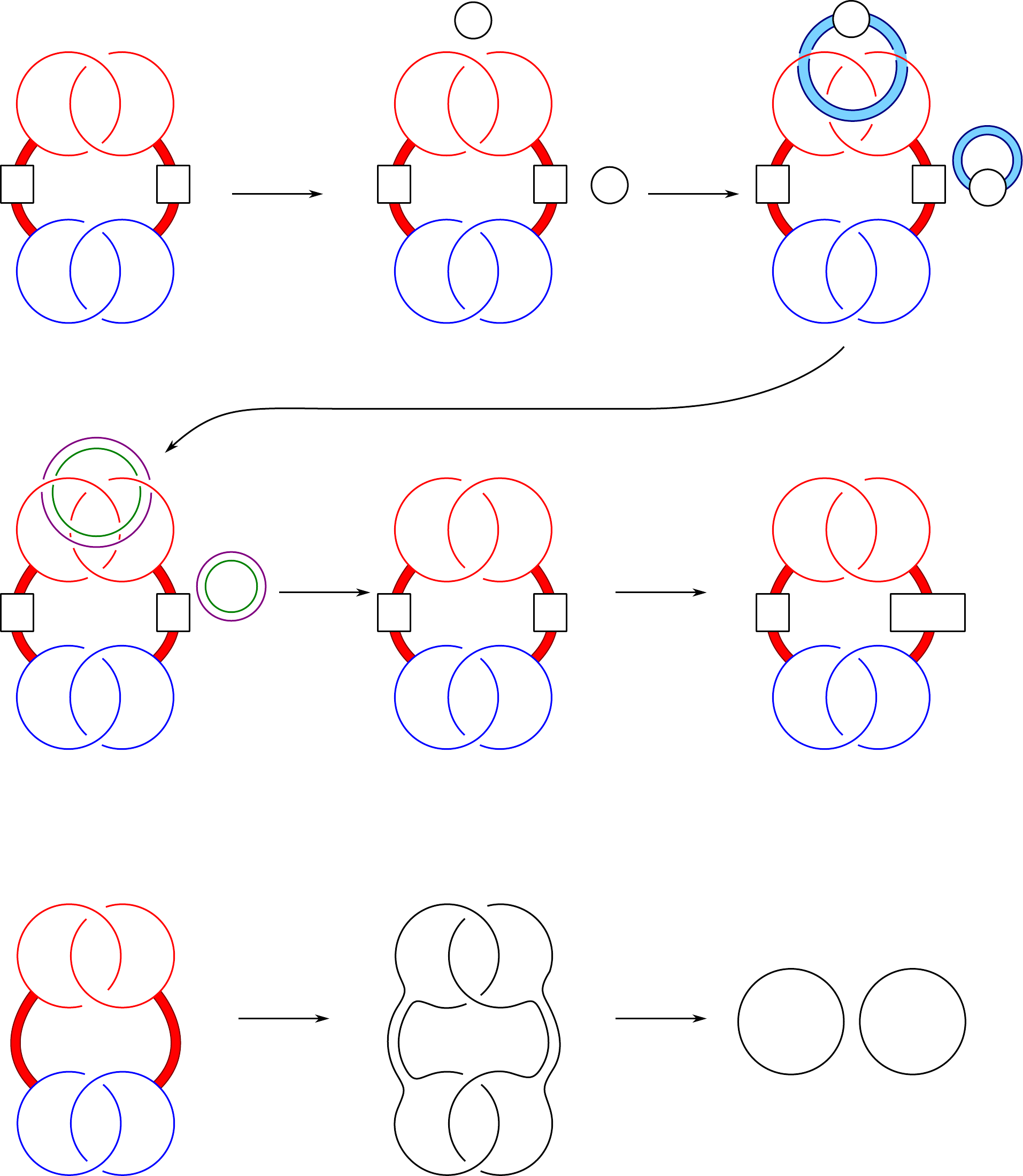}
    \caption{In the top left we show two valid bands for a Whitney move. Here $n$ and $m$ denote numbers of half-twists. Following the arrows, we add a negative half-twist to one of the bands. By repeating this argument (perhaps mirroring or reversing signs), we obtain the bottom left diagram. Performing this Whitney move transforms the original two Hopf links into a 2-component unlink of dual type I circles, which we can remove by performing one ambient surgery.}
    \label{fig:removetwohopfs}
\end{figure}

We now are in position to prove Theorem~\ref{thm:main3} (and then will conclude Theorem~\ref{thm:main2} as a corollary).

\begin{mainthm3}
Suppose that $S_0$ and $S_1$ are embedded, oriented, homotopic 2-spheres in an orientable 4-manifold $X$ such that $S_0$ has an immersed dual sphere $G$ in $X$ and $S_0$ is $s$-characteristic. Then $S_0$ and $S_1$ are concordant if and only if there exists a singular concordance $H$ from $S_0$ to $S_1$ so that $\mu(H)=0$ and $\Delta(H)=0$. 
\end{mainthm3}


\begin{proof}
If $S_0$ and $S_1$ are concordant, then a concordance $H$ between them satisfies $\mu(H)=0$ and $\Delta(H)=0$, so the ``only if" direction is trivial.

Now suppose $H$ is a singular concordance from $S_0$ to $S_1$ with $\mu(H)=0$ and $\Delta(H)=0$.
By Proposition~\ref{prop:LHopflink}, there is a singular concordance $H'$ from $S_0$ to $S_1$ whose singular link $L$ is a Hopf link consisting of a dual type I pair and $[H']=[H]$ in $H_3(X \times I, X \times \{0,1\}, \mathbb{Z}\pi_1X)$. Then $\Delta(H')=\Delta(H)=0$. Therefore, the element of $\pi_1(X)$ associated to the active singular circle of the singular link of $H'$ represents $0$ in $H_1(X;\mathbb{Z}/2\mathbb{Z})$. Applying Proposition~\ref{prop:trivialremove}, we find that $S_0$ and $S_1$ are concordant.
\end{proof}

Theorem~\ref{thm:main2} follows directly from Theorem~\ref{thm:main3} because if $\km(S_0,S_1)=0$, then $\km(S_0,S_1;[H])=0$ for some $H$, and hence $\Delta(H)=0$. 
If $\mu(\pi_3(X))=0$, then $\fq(S_0,S_1)=0$ implies $\mu(H)=0$ automatically.

\section{Additional discussion} \label{sec:example}

In this section, we give an example from our earlier work in~\cite{km} of a pair of 2-spheres $S_0,S_1$ with $\km(S_0,S_1)\neq 0$ and discuss the relevance of realizing the Stong invariant with obstructing sliceness of spherical links. In addition, we remark on the finiteness of certain concordance classes of knotted spheres.

\begin{example} \label{km_example}
In~\cite{km}, we show how to produce 2-spheres $S_0,S_1$ with nontrivial Stong invariant. In~\cite{km} we omit discussion of the quotient of the target by $\Delta(\Self(S_0))$, so we discuss it in greater detail here.

Let $X$ be a 4-manifold, $S$ be an $s$-characteristic 2-sphere in $X$, and $\alpha\in H_1(X;\mathbb{Z}/2)$ nontrivial and not in the image of $\Self(S)$ under $\Delta$. In practice, it is often not difficult to check that $\alpha$ is not in $\Delta(\Self(S))$; note that the relative long exact sequence in homology includes the following.
\[H_3(X; \mathbb{Z}\pi_1 X) \xrightarrow{\pi} H_3(X \times I, X \times \{0,1\}; \mathbb{Z} \pi_1 X)\xrightarrow{\partial}H_2(X \times \{0,1\};\mathbb{Z}\pi_1 X).\]

Therefore, given any two $H, H' \in \Self(S)$, since we have $\partial(H) = \partial(H')$ we must have $[H] = [H'] + \pi(x)$ for some $x \in H_3(X;\mathbb{Z}\pi_1 X)$. We could now for example consider one of the following situations.
\begin{itemize}
    \item $X=B^3\times S^1$, $S$ any 2-sphere, $\alpha$ the generator of $H_1(B^3\times S^1;\mathbb{Z}/2\mathbb{Z})$. Since $H_3(X;\mathbb{Z}\pi_1 X)=0$, we have $\Delta(\Self(S))=0$.
    \item $X=S^3\times S^1$, $S$ any 2-sphere, $\alpha$ the generator of $H_1(S^3\times S^1;\mathbb{Z}/2\mathbb{Z})$. We have $H_3(X;\mathbb{Z}\pi_1 X)\cong\mathbb{Z}$, with generator represented by an embedded 3-sphere. We conclude that every element of $\Self(S)$ is homologous (with $\mathbb{Z}\pi_1 X$ coefficients) to an embedded self-concordance of $S$, and hence $\Delta(\Self(S))=0$.
\end{itemize}


In~\cite[Example 7.2]{km}, we show how to produce a 2-sphere $S_1$ homotopic to $S_0:=S\#\mathbb{CP}^1\#\overline{\mathbb{CP}}^1$ embedded in $X\#\mathbb{CP}^2\#\overline{\mathbb{CP}}^2$ with $\km(S_0,S_1)=\alpha$. (As usual, if $S_0$ is smooth, then we may take $S_1$ to be smooth.) The 2-sphere $S_1$ is obtained from $S_0$ by performing a finger move about $\alpha$ followed by one Whitney move, chosen so that the trace of the described homotopy has singular link a Hopf link. The Whitney disk can almost be found in $X$: in~\cite{km} we perform a finger move to $S$ in $X$ to obtain an immersed sphere $S'$ and exhibit a framed Whitney disk $W'$ that intersects $S'$ in two points. We remove these two intersection points by blowing up twice (with opposite signs to ensure the resulting neat Whitney disk is still framed).

 Since \[H_3(X\#\mathbb{CP}^2\#\overline{\mathbb{CP}}^2;\mathbb{Z}\pi_1 (X\#\mathbb{CP}^2\#\overline{\mathbb{CP}}^2))= H_3(X;\mathbb{Z}\pi_1 X)\] and $\alpha\not\in\Delta(\Self(S))$, we also have $\alpha\not\in\Delta(\Self(S_0))$. Thus, $\km(S_0,S_1)$ is nontrivial in the quotient \[H_1(X\#\mathbb{CP}^2\#\overline{\mathbb{CP}}^2;\mathbb{Z}/2\mathbb{Z})/\Delta(\Self(S_0))\cong H_1(X;\mathbb{Z}/2\mathbb{Z})/\Delta(\Self(S)).\] Thus, $S_0$ and $S_1$ are not concordant.

\end{example}

\begin{remark}
In Example~\ref{km_example}, if $S$ already has two embedded dual spheres with Euler numbers $2n+1$ and $-2n-1$, we can avoid blowing up. We instead have $S_0=S$ and $S_1$ obtained from $S$ by a finger and Whitney move, with $\km(S_0,S_1)=\alpha$.
\end{remark}

The Stong invariant is particularly interesting in light of its potential relevance to the study of concordance of links. While Kervaire~\cite{kervaire} showed that every 2-sphere in $S^4$ is concordant to the unknotted 2-sphere, it remains open whether every link of 2-spheres in $S^4$ is concordant to the unlink.

\begin{slicelink}
If there are 2-spheres $S_0,S_1$ in $B^3\times S^1$ with $\km(S_0,S_1)=1$ in $H_1(B^3\times S^1;\mathbb{Z}/2\mathbb{Z})\cong\mathbb{Z}/2\mathbb{Z}$ then there is a 2-component link of spheres in $S^4$ that is not concordant to the unlink.
\end{slicelink}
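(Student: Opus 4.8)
The plan is to build the link in $S^4$ out of the two spheres $S_0, S_1$ in $B^3 \times S^1$ by embedding a neighborhood of $B^3 \times S^1$ into $S^4$ and pairing it with a standard complementary sphere. Concretely, $B^3 \times S^1$ embeds in $S^4$ in the standard way so that its complement is $S^2 \times D^2$ (thinking of $S^4 = \partial(B^4 \times S^1)$ replaced appropriately, or more cleanly: $S^4 = (B^3\times S^1)\cup (S^2\times D^2)$ along $S^2\times S^1$). Let $G_0$ be the sphere $S^2 \times \{pt\}$ pushed slightly into the $S^2 \times D^2$ side; this is an unknotted 2-sphere in $S^4$. The core observation is that $G_0$ serves as a \emph{geometric dual} to any 2-sphere in the $B^3\times S^1$ piece: a 2-sphere $S_i \subset B^3\times S^1$ has algebraic intersection with $G_0$ equal to the degree of its projection to $S^1$, which for the spheres we care about can be arranged to be $1$ (or at least odd). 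Thus the link $(S_i, G_0) \subset S^4$ has $G_0$ as an immersed — in fact embedded, framed — dual sphere to $S_i$.

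The key step is then to argue: if $(S_0, G_0)$ were concordant to the unlink $(U_0, U_1)$ in $S^4$, then $S_0$ and $S_1$ would be concordant in $B^3\times S^1$, contradicting $\km(S_0,S_1) = 1$ (which is a nontrivial obstruction by item (2) in the list of properties of $\km$, since $H_3(B^3\times S^1;\mathbb{Z}\pi_1) = 0$ forces $\Delta(\Self(S_0)) = 0$ so $\km$ lives honestly in $H_1(B^3\times S^1;\mathbb{Z}/2)\cong\mathbb{Z}/2$). To make this work, I first note that $(S_1, G_0)$ is also a link of the same form, with $G_0$ still a framed dual. If $(S_0, G_0)$ and $(S_1,G_0)$ are \emph{both} concordant to the unlink, then they are concordant to each other; a concordance of links restricting to a product on the $G_0$ component gives, after tubing/ambient surgery using the $G_0$-dual at every level (à la Schneiderman–Teichner, or just by the uniqueness of tubular neighborhoods of the dual), a concordance between $S_0$ and $S_1$ that is supported in the complement of a neighborhood of $G_0$, i.e., a concordance \emph{in} $B^3\times S^1$. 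The hypothesis $\km(S_0,S_1)=1$ then yields a contradiction. So at least one of $(S_0,G_0)$, $(S_1,G_0)$ is a 2-component link of spheres in $S^4$ not concordant to the unlink.

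The main obstacle I expect is the step extracting a concordance in $B^3 \times S^1$ from a concordance of the links in $S^4$: one must be careful that the ambient surgeries along the dual sphere $G_0\times I$ (which have codimension issues — we are removing a tubular neighborhood of a concordance of $G_0$ inside $S^4\times I$) actually produce $(B^3\times S^1)\times I$ and carry the traces of $S_0, S_1$ to a genuine concordance there. The cleanest route is probably: a concordance of links $(S_0,G_0)\simeq(U_0,U_1)$ in $S^4\times I$ can be isotoped so the $G_0$-component is $G_0\times I$ standardly; then removing an open tubular neighborhood $\nu(G_0\times I) = S^2\times D^2\times I$ from $S^4\times I$ leaves $(B^3\times S^1)\times I$, and the $S_0$-component becomes a concordance from $S_0$ to $U_0$; since $U_0\subset S^4\setminus\nu(G_0)$ is the unknotted 2-sphere sitting in a ball disjoint from the removed piece, it is the trivial sphere in $B^3\times S^1$, and one checks this trivial sphere is in turn concordant (even isotopic) to what $S_1$ becomes. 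Chaining these concordances in $B^3\times S^1$ gives $S_0\simeq S_1$, contradicting nonvanishing of $\km$. An alternative, possibly slicker, formulation avoids choosing both spheres and instead directly obstructs $(S_0, G_0)\simeq$ unlink by observing that such a concordance would exhibit $S_0$ as concordant in $B^3\times S^1$ to the standardly-embedded trivial sphere $T$ (the one bounding $B^3\times\{pt\}$), whence $\km(S_0, T) = 0$; but $\km(S_0,T) = \km(S_0,S_1) + \km(S_1,T)$ and one shows $\km(S_1,T)$ or the relevant combination is forced to be nonzero — so I would present whichever of these bookkeeping arguments comes out cleanest, with the dual-sphere tubing lemma as the technical heart.
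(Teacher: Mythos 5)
Your construction of the candidate link agrees with the paper's: identify $S^4\setminus\nu(U)$ with $B^3\times S^1$ for $U$ the unknotted $2$-sphere and consider the links $U\sqcup S_i$. (One incidental error: $G_0$ is not a dual sphere to $S_i$ --- the two spheres are disjoint and all homological intersection pairings in $S^4$ vanish; luckily this claim is not load-bearing for your main line of argument, though it does sink your ``tube along the dual at every level'' fallback.) The opening bookkeeping --- using additivity of $\km$ to replace $S_0$ by the unknotted sphere of $B^3\times S^1$ so that one of the two links is literally the unlink, and noting $H_3(B^3\times S^1;\mathbb{Z}\pi_1)=0$ forces $\Delta(\Self(S_0))=0$ --- matches the paper and is fine.

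The genuine gap is the key step where you assume a link concordance to the unlink ``can be isotoped so the $G_0$-component is $G_0\times I$ standardly,'' so that deleting its neighborhood from $S^4\times I$ yields the product $(B^3\times S^1)\times I$ and hence a concordance $S_0\simeq S_1$ inside $B^3\times S^1$. There is no justification for standardizing a self-concordance $H_U$ of the unknotted $2$-sphere to a product; its complement $W=S^4\times I\setminus\nu(\Im(H_U))$ is a priori not a product cobordism, and this is precisely the difficulty the paper's argument is built to evade. The paper never standardizes $H_U$: it takes the collar $W'=W\cap(S^4\times[1-\epsilon,1])\cong B^3\times S^1\times I$, bounds $S_1$ by an immersed $3$-ball $B\subset W'$ with $\Delta(B)=1$ (this value is forced because tubing $B$ to the unknotted sphere would compute $\km(S_0,S_1)=1$), and then observes that since $H_1(W;\mathbb{Z}/2\mathbb{Z})\cong H_1(W';\mathbb{Z}/2\mathbb{Z})$ the obstruction $\Delta(B)=1$ persists when $B$ is regarded inside $W$ --- contradicting the existence of the embedded concordance component $H$ lying in $W$. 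To repair your proof you would need either to prove the standardization claim (which is not available) or to replace it with an argument, as in the paper, that only uses the mod-$2$ first homology of the complement of $H_U$ rather than its diffeomorphism type.
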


\begin{proof}
Note that we have not strictly defined the Stong invariant for a pair of 2-spheres that are not based-homotopic nor have a dual sphere. However, there is a self-concordance $H$ of $S_0$ so that $H($basepoint$\times I)\subset B^3\times S^1\times I$ projects to a loop in $B^3\times S^1$ representing the generator of $\pi_1(B^3\times S^1)$. (In words, take $H$ to be an isotopy that moves each $B^3$ factor once about the $S^1$ factor of $B^3\times S^1$.)

Take $S_0$ and $S_1$ to have a common basepoint. Given any singular concordance $H'$ from $S_0$ to $S_1$, we can preconcatenate $H'$ with some number of copies of $H$ and obtain a singular concordance isotopic rel.\ boundary to a based singular concordance with the same singular link as $H'$. Therefore, if $\km(S_0,S_1)=1$, implying that there is no based concordance from $S_0$ to $S_1$, then there is no concordance from $S_0$ to $S_1$ even without reference to basepoints.

Let $S$ be the unknotted 2-sphere in $B^3\times S^1$. Since $\km(S_0,S_1)\neq 0$, we cannot have both $\km(S,S_0)=0$ and $\km(S,S_1)=0$. Without loss of generality, take $\km(S,S_1)\neq 0$ and redefine $S_0:=S$.

Now let $U$ be the unknotted 2-sphere in $S^4$ and identify $S^4\setminus\nu(U)$ with $B^3\times S^1$. Then $U\sqcup S_0$ is the 2-component unlink. Suppose there are concordances $H_U,H:S^2\times I\to S^4\times I$ with disjoint images so that $H_U$ goes from $U$ to $U$ and $H$ goes from $S_0$ to $S_1$. Let $W=S^4\times I\setminus\nu(\Im(H_U))$. Let $W'$ be the intersection of $W$ with $S^4\times[1-\epsilon,1]$ with $\epsilon$ small so that $W'$ can be naturally identified with $B^3\times S^1\times I$. Refer to Figure~\ref{fig:slicelinks} for a schematic.

Since $S_1\subset\partial W'$ is nullhomotopic in $W'$, there is an immersed ball $B$ in $W'$ with boundary $S_1$. We could tube $B$ to a copy of $S_0$ in $\partial W'$ to obtain a concordance from $S_0$ to $S_1$, so we conclude $\Delta(B)=1$. On the other hand, by including $B$ into $W$ and tubing $B$ to $S_0$ in $\partial W$ we also find the Stong invariant of the pair $(S_0,S_1)$ in the boundary of the 5-manifold $W$ is still $\Delta(B)=1\in H_1(W';\mathbb{Z}/2\mathbb{Z})=H_1(W;\mathbb{Z}/2\mathbb{Z})$. This contradicts the existence of $H$.

We plan to discuss the Stong invariant in a general 5-manifold (rather than a concordance invariant defined using a product) in~\cite{km2}. This is already the main focus of Stong's work~\cite{stong}, so this technical point can be sidestepped by translating the above argument into his setting. In~\cite{stong}, one would instead define $\km(S_1)$ to be $\Delta(B)$ for $B$ an immersed 3-ball bounded by $S_1$, rather than by considering any concordance. We conclude that because $\km(S_1)=1$ when viewed as a 2-sphere in $\partial W'$, we also have $\km(S_1)=1$ when viewed as a 2-sphere in $\partial W$ since $H_1(W;\mathbb{Z}/2\mathbb{Z})$ and $H_1(W';\mathbb{Z}/2\mathbb{Z})$ are identified.
\end{proof}

\begin{figure}
\labellist
\small\hair 2pt
\pinlabel{$\textcolor{darkgreen}U$} at 55 -7
\pinlabel{$\textcolor{darkgreen}U$} at 55 150
\pinlabel{$\textcolor{blue}{S_0}$} at 138 -7
\pinlabel{$\textcolor{blue}{S_1}$} at 125 150
\pinlabel{$\textcolor{darkgreen}{H_U}$} at 50 75
\pinlabel{$\textcolor{blue}{H}$} at 130 65
\pinlabel{$\textcolor{darkerred}{B}$} at 160 120
\pinlabel{$W'$} at -10 123
\pinlabel{$W\setminus W'$} at -20 50
\endlabellist
\vspace{.1in}
    \centering
    \includegraphics{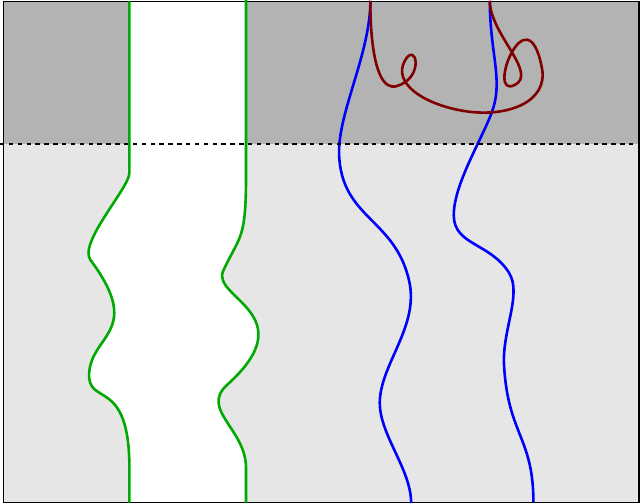}
    \caption{A schematic of the proof of Theorem~\ref{thm:slicelink}.}
    \label{fig:slicelinks}
\end{figure}

It is tempting to think that Theorem~\ref{thm:slicelink} can be restated for concordance of links whose components represent nontrivial homology elements in a general simply connected 4-manifold. For instance, via Example~\ref{km_example} we can obtain 2-spheres $S_0,S_1$ in $X:=B^3\times S^1\#\mathbb{CP}^2\#\overline{\mathbb{CP}}^2$ in the homology class $[\mathbb{CP}^1\#\overline{\mathbb{CP}}^1]$ with $\km(S_0,S_1)$ nontrivial. Letting $U$ denote the unknotted 2-sphere in $\mathbb{CP}^2\#\overline{\mathbb{CP}}^2$ and identifying $X$ with $\mathbb{CP}^2\#\overline{\mathbb{CP}}^2\setminus\nu(U)$, one can ask whether the link $U\sqcup S_0$ is concordant to $U\sqcup S_1$. We know from work of Sunukjian~\cite{sunukjian} that $S_0$ and $S_1$ are concordant inside $\mathbb{CP}^2\#\overline{\mathbb{CP}}^2$, so this is truly a question about link concordance. Unfortunately, it is not clear how to make use of the fact that $\km(S_0,S_1)\neq 0$ inside $X\times I$ -- this just informs us that any concordance from $S_0$ to $S_1$ in $(\mathbb{CP}^2\#\overline{\mathbb{CP}}^2)\times I$ must intersect $U\times I$. In theory there could be a concordance from $U\sqcup S_0$ to $U\sqcup S_1$ in which the self-concordance $H$ of $U$ is nontrivial. In order to apply an argument analogous to that of Theorem~\ref{thm:slicelink}, it would be necessary to find a singular concordance from $S_0$ to $S_1$ inside $(
\mathbb{CP}^2\#\overline{\mathbb{CP}}^2)\times I$ that is disjoint from both the image of $H$ and $U\times I$, which in general seems to be a nontrivial task.

We also give the following application to concordance classes of 2-spheres in 4-manifolds.

\begin{finitecor}
Let $S_0$ be an oriented embedded 2-sphere in an orientable 4-manifold $X$ with an immersed dual sphere $G$.  Let $\Concordance(S_0)$ be the set of concordance classes of embedded spheres in $X$ that are homotopic to $S_0$. Suppose that $\pi_1 X$ has a finite number of 2-torsion elements.
\begin{enumerate}
    \item Suppose $S_0$ is not $s$-characteristic.  Then $\Concordance(S_0)$ is finite of size at most $2^{|T_X|}$.
    \item Suppose $S_0$ is $s$-characteristic and that $\mu(\pi_3 X) = 0$.  Then $\Concordance(S_0)$ is finite of size at most $2^{|T_X|} \cdot |H_1(X; \mathbb{Z}/2\mathbb{Z})|$.
\end{enumerate}
\end{finitecor}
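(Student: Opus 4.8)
The plan is to deduce finiteness directly from Theorems~\ref{thm:main1} and~\ref{thm:main2}, which identify $\Concordance(S_0)$ (as a set, once we fix $S_0$ and vary $S_1$ over all embedded spheres homotopic to $S_0$) with the fibers of the map sending $S_1$ to its pair of concordance invariants. First I would observe that since all spheres under consideration are homotopic to $S_0$, each has the same immersed dual $G$ (up to homotopy) and the same $s$-characteristic status, so exactly one of Theorems~\ref{thm:main1},~\ref{thm:main2} applies uniformly. The key point is that two such spheres $S_1, S_2$ are concordant if and only if they have the same invariants: in the non-$s$-characteristic case, $S_1 \sim S_2$ iff $\fq(S_1,S_2) = 0$, and in the $s$-characteristic case (with $\mu(\pi_3 X) = 0$) iff $\fq(S_1,S_2) = 0$ and $\km(S_1,S_2) = 0$.

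For part (1): I would check that $\fq$ satisfies the cocycle-type relation $\fq(S_1,S_2) = \fq(S_1,S_0) + \fq(S_0,S_2)$ in $\mathbb{F}_2 T_X$ (using that $\mu$ is additive under stacking singular concordances, which is immediate from the definition of $\mu$ as a sum over type II circles). Here I am using $\mu(\pi_3 X)$ is not necessarily zero, but in the non-$s$-characteristic case Theorem~\ref{thm:main1} says the full invariant $\fq \in \mathbb{F}_2 T_X / \mu(\pi_3 X)$ is the complete obstruction, so the map $S_1 \mapsto \fq(S_0, S_1) \in \mathbb{F}_2 T_X/\mu(\pi_3 X)$ descends to an injection on $\Concordance(S_0)$. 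Since $|T_X|$ is finite, $|\mathbb{F}_2 T_X| = 2^{|T_X|}$, so $\Concordance(S_0)$ injects into a set of size at most $2^{|T_X|}$, giving the bound. (The bound is stated for the full vector space $\mathbb{F}_2 T_X$, not the quotient, which only makes it more generous, so no subtlety there.)

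For part (2): the argument is the same but with the pair of invariants. Using $\mu(\pi_3 X) = 0$, the Freedman--Quinn invariant lives honestly in $\mathbb{F}_2 T_X$ (no quotient needed, by Remark~\ref{rmk:hypos_fq}), and the Stong invariant $\km(S_0, S_1)$ lives in $H_1(X;\mathbb{Z}/2\mathbb{Z})/\Delta(\Self(S_0))$, which is a quotient of $H_1(X;\mathbb{Z}/2\mathbb{Z})$. By Theorem~\ref{thm:main2}, $S_1$ and $S_2$ homotopic to $S_0$ are concordant iff $\fq(S_1,S_2) = 0$ and $\km(S_1,S_2) = 0$; combined with the additivity (bilinearity-type) relations for both $\fq$ and $\km$ under stacking — the relation for $\km$ being exactly the statement that $\Delta$ is a monoid homomorphism, recorded in Definition~\ref{def:km} — the assignment $S_1 \mapsto (\fq(S_0,S_1), \km(S_0,S_1))$ gives an injection of $\Concordance(S_0)$ into $\mathbb{F}_2 T_X \times \big(H_1(X;\mathbb{Z}/2\mathbb{Z})/\Delta(\Self(S_0))\big)$. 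The latter has size at most $2^{|T_X|} \cdot |H_1(X;\mathbb{Z}/2\mathbb{Z})|$, which is finite provided $H_1(X;\mathbb{Z}/2\mathbb{Z})$ is finite — and here I should note that the hypothesis ``$\pi_1 X$ has finitely many $2$-torsion elements'' must be supplemented (or it must be observed that finiteness of $H_1(X;\mathbb{Z}/2\mathbb{Z})$ is needed); in practice the intended hypothesis is that $\pi_1 X$ is finitely generated, which forces $H_1(X;\mathbb{Z}/2\mathbb{Z})$ finite.

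The main obstacle I anticipate is not any deep geometry — all the hard work is already in Theorems~\ref{thm:main1} and~\ref{thm:main2} — but rather the bookkeeping of the \emph{additivity} relations $\fq(S_1,S_2) = \fq(S_1,S_0) + \fq(S_0,S_2)$ and the analogous one for $\km$, and making sure the ``relative'' invariants $\fq(S_0,S_1)$, $\km(S_0,S_1)$ are well-defined for \emph{homotopic} (not based-homotopic) spheres. This is handled by Proposition~\ref{prop:no_base_points} together with the presence of the immersed dual $G$: any $S_1$ homotopic to $S_0$ has $G$ as a dual, so every pair $(S_i, S_j)$ admits a common immersed sphere hitting each once, and both invariants are defined without basepoint ambiguity. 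Once additivity is in hand, the proof is the formal argument above: a complete invariant taking values in a set of size $N$ forces at most $N$ concordance classes.
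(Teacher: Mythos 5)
Your proposal is correct and follows essentially the same route as the paper: additivity of $\fq$ (and of $\km$, via $\Delta$ being a monoid homomorphism) under stacking singular concordances, combined with Theorems~\ref{thm:main1} and~\ref{thm:main2} as completeness statements, yields an injection of $\Concordance(S_0)$ into $\mathbb{F}_2T_X$ (resp.\ $\mathbb{F}_2T_X\times H_1(X;\mathbb{Z}/2\mathbb{Z})/\Delta(\Self(S_0))$). Your side remark about needing $H_1(X;\mathbb{Z}/2\mathbb{Z})$ finite is a fair observation but harmless here, since $X$ is taken to be compact.
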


\begin{proof}
Let $S_1$ and $S_2$ be embedded 2-spheres in $X$ with $\fq(S_0,S_1) = \fq(S_0,S_2)$.  By concatenating singular concordances $H_1$ from $S_1$ to $S_0$ and $H_2$ from $S_2$ to $S_0$, we find $\fq(S_1,S_2) = \fq(S_1,S_0) + \fq(S_0,S_2) = 0$. Note that $G$ is a dual sphere for the singular concordance from $S_1$ to $S_2$ obtained by stacking $S_0$ to $S_1$.

Thus when $S_0$ is not $s$-characteristic, by the proof of Theorem~\ref{thm:main1} we find that $S_1$ and $S_2$ are concordant and obtain $|\Concordance(S_0)| \leq |\mathbb{F}_2T_X|=2^{|T_X|}\cdot$.

Similarly, if $S_0$ is $s$-characteristic and $\mu(\pi_3 X)=0$, we see again by apply the proof of Theorem~\ref{thm:main2} (using the fact that $G$ is a dual sphere to some singular concordance $H$ from $S_1$ to $S_2$ with $\mu(H)=0$) that $S_1$ and $S_2$ are concordant exactly when $\km(S_1,S_2)=0$. Thus $|\Concordance(S_0)|\le|\mathbb{F}_2T_X|\cdot |H_1(X;\mathbb{Z}/2\mathbb{Z})|=2^{|T_X|} \cdot |H_1(X; \mathbb{Z}/2\mathbb{Z})|$.
\end{proof}

\bibliographystyle{plain} 
\bibliography{biblio}

\end{document}